\def\NN{\mathbb N}
\def\DD{\mathcal D}
\def\RR{\mathbb R}
\def\PP{\mathcal P}
\def\EE{\mathbb E}
\def\HH{\mathcal H}
\def\ZZ{\mathbb Z}
\def\xx{\mathbf{x}}
\def\yy{\mathbf{y}}
\def\zz{\mathbf{z}}
\newcommand{\la} {\lambda}
\newcommand{\al} {\alpha}
\newcommand{\Ran}{\mathcal{R}}
\newcommand{\LL}{\mathscr{L}^2(X,\rho_X;Y)}
\newcommand{\LN}{\mathscr{L}^2(X,\nu;Y)}
\newcommand{\fbar}{\bar f}
\newcommand{\argmin}{\operatornamewithlimits{argmin}}
\newcommand{\paren}[1]{\left(#1\right)}
\newcommand{\brac}[1]{\left\{#1\right\}}
\newcommand{\sbrac}[1]{\left[#1\right]}
\newcommand{\inner}[1]{\left\langle#1\right\rangle}
\newcommand{\norm}[1]{\left\|#1\right\|}
\newcommand{\abs}[1]{\left\lvert #1 \right\rvert}
\newcommand{\ol}[1]{\overline{#1}}
\newcommand{\tr}{\operatorname{tr}}
\newtheorem{theorem}{Theorem}[section]
\newtheorem{corollary}[theorem]{Corollary}
\newtheorem{definition}[theorem]{Definition}
\newtheorem{proposition}[theorem]{Proposition}
\newtheorem{assumption}{Assumption}
\newtheorem{remark}[theorem]{Remark}
\newtheorem{example}[theorem]{Example}
\newcommand{\gillesb}[1]{\textcolor{blue}{#1}}
\begin{document}
\title[Tikhonov regularization for non-linear inverse learning problems]{Convergence analysis of Tikhonov regularization for non-linear statistical inverse learning problems} 
\author{Abhishake}
\address{Institute of Mathematics, University of Potsdam,
  Karl-Liebknecht-Strasse 24-25, 14476 Potsdam, Germany}
\email{abhishake@uni-potsdam.de}

\author{Gilles Blanchard}
\address{Institute of Mathematics, University of Potsdam,
  Karl-Liebknecht-Strasse 24-25, 14476 Potsdam, Germany}
\email{gilles.blanchard@math.uni-potsdam.de}
 
\author{Peter Math{\'e}}
\address{Weierstrass Institute for Applied Analysis and Stochastics,
  Mohrenstrasse 39, 10117 Berlin, Germany} 
\email{peter.mathe@wias-berlin.de}

\date{}
\thanks{This research has been partially funded by Deutsche Forschungsgemeinschaft (DFG) through grant CRC 1294 ``Data Assimilation", Project (A04) ``Non-linear statistical inverse problems with random observations".} 
\keywords{Statistical inverse problem; Tikhonov regularization; Reproducing kernel Hilbert space; General source condition; Minimax convergence rates.}
\subjclass[2010]{Primary: 65J20; Secondary: 62G08, 62G20, 65J15, 65J22.}

\begin{abstract}
We study a non-linear statistical inverse learning problem, where we observe the noisy image of a quantity through a non-linear operator at some random design points. We consider the widely used Tikhonov regularization (or method of regularization, MOR) approach to reconstruct the estimator of the quantity for the non-linear ill-posed inverse problem. The estimator is defined as the minimizer of a Tikhonov functional, which is the sum of a data misfit term and a quadratic penalty term. We develop a theoretical analysis for the minimizer of the Tikhonov regularization scheme using the ansatz of reproducing kernel Hilbert spaces. We discuss optimal rates of convergence for the proposed scheme, uniformly over classes of admissible solutions, defined through appropriate source conditions. 
\end{abstract} 
\maketitle

\section{Introduction}\label{Introduction}

In this study, we shall consider non-linear operator equations of the form
\begin{equation}
  \label{eq:base}
  A(f) = g,
\end{equation}
where the non-linear mapping~$A\colon \DD(A) \subseteq \HH_{1} \to \HH_{2}$ is acting between the real separable Hilbert spaces~$\HH_{1}$ and~$\HH_{2}$. Such non-linear inverse problems occur in many situations, and examples are given in the seminal monograph~\cite{Engl}. Of special importance are problems of parameter identification in partial differential equations, and we mention the monograph~\cite[Chapt.~1]{Isakov}, and the more recent~\cite{Schuster}.

Within the \emph{classical setup} it is assumed to observe noisy data~$g^{\delta}\in\HH_{2}$ with~$\norm{g^{\delta} - A(f)}_{\HH_{2}}\leq \delta$, where the number~$\delta>0$ denotes the noise level.  In \emph{supervised learning}, it is assumed that the image space~$\HH_{2}$ consists of functions, given on some domain~$X$ and taking values in another Hilbert space~$Y$. Moreover, function evaluation is continuous, such that for~$x\in X$ the values~$g(x)= A(f)(x)$  are well defined elements in~$Y$. The goal is to \emph{learn} the unknown and indirectly observed quantity~$f\in\HH_{1}$ from examples, given in the form of i.i.d. samples~$\zz=\{(x_i,y_i)\}_{i=1}^m\in(X\times Y)^m$, where the elements~$y_{i},\ i=1,\dots,m$ are noisy observations of~$g(x_{i}),\ i=1,\dots,m$ at random points~$x_{i},\ i=1,\dots,m$ of the form
\begin{equation}\label{Model}
  y_i:= g(x_i) + \varepsilon_i \quad\text{for  }  i = 1,\ldots,m,\quad \text{where}\ g= A(f).
\end{equation}
We assume that the random observations of~$\zz$ are drawn independently and identically according to some unknown joint probability distribution~$\rho$ on the sample space~$Z=X\times Y$.  The noise terms~$(\varepsilon_i)_{i=1}^m$ are independent centered random variables satisfying~$\mathbb{E}_{y_i}[\varepsilon_i |x_i] = 0$. The cardinality~$m$ of the samples~$\zz$ is called sample size.

In the case of random observations, the literature is much more scarce than for the classical setup. Milestone work includes \cite{OSullivan} which considers asymptotic analysis for the generalized Tikhonov regularization for~(\ref{fzl}) using the linearization technique. The reference \cite{Bissantz} considers a 2-step approach, however, it is assumed that the norm in~$\LN$ (the space of square integrable functions with respected the probability measure~$\nu$ on~$X$) is observable, an unrealistic assumption if the only information on~$\nu$ is available through the points~$(x_1,\ldots,x_m)$. The references \cite{Bauer09} and \cite{Hohage} consider respectively a Gauss-Newton algorithm and the MOR method for certain non-linear inverse problem, but also in the idealized setting of Hilbertian white or colored noise, which can only cover sampling effects when~$\LN$ is known. Loubes et al. \cite{Loubes} consider~(\ref{fzl}) under a fixed design and concentrate on the problem of model selection. Finally, the recent work \cite{Ray} analyzes rates of convergence in a model where observations are of the form~$h(Kf)(x)$ perturbed by noise, but only in a white noise model and for specific, uni-variate non-linear link functions~$h$, linear operator~$K$.

A widely used approach to stabilizing the estimation problem~(\ref{Model}) is Tikhonov regularization or regularized least-squares algorithm or method of regularization (MOR). The estimate of the true solution of \eqref{Model} is obtained by minimizing an objective function consisting of an error term measuring the fit to the data plus a smoothness term measuring the complexity of the quantity~$f$. For the non-linear statistical inverse learning problem~\eqref{Model}, the regularization scheme over the hypothesis space~$\HH_1$ can be described as
\begin{equation}\label{fzl}
  f_{\zz,\la} =
  \argmin_{f\in\mathcal{D}(A)\subset\HH_1}\left\{\frac{1}{m}\sum\limits_{i=1}^m\norm{A(f)(x_i)-y_i}_Y^2+\la 
    \norm{f-\fbar}_{\HH_1}^2\right\}. 
\end{equation}
Here~$\fbar\in\HH_1$ denotes some initial guess of the true solution, which offers the possibility to incorporate {\em a priori} information. The regularization parameter~$\la$ is positive which controls the trade-off between the error term measuring the fitness of data and the complexity of the solution measured in the norm in~$\HH_1$.

The  objective of this paper is to analyze the theoretical properties of the regularized least-squares estimator~$f_{\zz,\la}$, in particular, the asymptotic performance of the algorithm is evaluated by the bounds and the rates of convergence of the regularized least-squares estimator~$f_{\zz,\la}$ in the reproducing kernel ansatz. Precisely, we develop a non-asymptotic analysis of Tikhonov regularization~(\ref{fzl}) for the non-linear statistical inverse learning problem based on the tools that have been developed for the modern mathematical study of reproducing kernel methods. The challenges specific to the studied problem are that the considered model is an inverse problem (rather than a pure prediction problem) and non-linear. The upper rate of convergence for the regularized least-squares estimator~$f_{\zz,\la}$ to the true solution is described in probabilistic sense by exponential tail inequalities. For sample size~$m$, a positive decreasing function~$m\mapsto \varepsilon(m)$, and for confidence level~$0<\eta<1$, we establish bounds of the form
$$
\mathbb{P}_{\zz\in Z^m}\left\{\norm{f_{\zz,\la}-f}_{\HH_1}\leq
  \varepsilon(m)\log\left(\frac{1}{\eta}\right)\right\}\geq 1-\eta.
$$
The function~$m\mapsto \varepsilon(m)$ describes the rate of convergence as~$m\to 0$. The upper rate of convergence is complemented by a minimax lower bound for any learning algorithm for considered non-linear statistical inverse problem. The lower rate result shows that the error rate attained by Tikhonov regularization scheme for suitable parameter choice of the regularization parameter is optimal on a suitable class of probability measures.

Now we review previous results concerning regularization algorithms on different learning schemes which are directly comparable to our results: Rastogi et al. \cite{Rastogi} and Blanchard et al. \cite{Blanchard}. For convenience, we tried to present the most essential points in a unified way in Table~\ref{comparision}.
\begin{table}[ht!]
  \centering
    \begin{tabular}{|m{1.4cm}|l|c|m{4cm}|m{1.4cm}|m{1.3cm}|}
      \hline
      &~$\|f_{\zz,\la}\hspace{-.1cm}-\hspace{-.1cm}f\|_{\HH_1}$ &  \small{Smoothness} & \small{  ~~~~Scheme} & \small{general source condition} & \small{Optimal rates} \\ 
      \hline
      \small{Rastogi et al. \cite{Rastogi}}
      &~$m^{-\frac{br}{2br+b+1}}$  &~$0\leq r\leq 1$ & \small{General regularization for direct learning} &$~~\surd$&$~~\surd$\\
      \hline
      \small{Blanchard et al. \cite{Blanchard}}  &~$m^{-\frac{br}{2br+b+1}}$  &~$0\leq r\leq 1$ & \small{General regularization for linear inverse learning} &&$~~\surd$\\
      \hline
      \small{Our Results} &~$m^{-\frac{br}{2br+b+1}}$ &~$\frac{1}{2}\leq r\leq 1$ & \small{Tikhonov regularization for non-linear inverse learning} &$~~\surd$&$~~\surd$\\
      \hline
    \end{tabular}
  \caption{Convergence rates of the regularized least-squares algorithms on different learning schemes}\label{comparision}
\end{table}

In this table, the parameter~$r$ corresponds to a (H\"older type) smoothness assumption for the unknown true solution, and the parameter~$b>1$ corresponds to the decay rate of the eigenvalues of the covariance operator, both to be introduced below in Assumption~\ref{source.cond}, and Assumption~\ref{poly.decay}, respectively.

The model~(\ref{Model}) covers non-parametric regression under random design (which we also call the direct problem, i.e.,~$A = I$),  and the linear statistical inverse learning problem. Thus, introducing a general non-linear operator~$A$ gives a unified approach to the different learning problems.  In the direct learning setting, Rastogi et al. \cite{Rastogi} obtained minimax optimal rates of convergence for general regularization under general source condition. Blanchard et al.~\cite{Blanchard} considered the general regularization for the linear statistical inverse learning problem. They generalized the convergence analysis of the direct learning scheme to the inverse learning setting and achieved the minimax optimal rates of convergence for general regularization under a H{\"o}lder source condition. They considered that the image of the operator~$A$ is a reproducing kernel Hilbert space which is a special case of our general assumption that~$Im(A)$ is contained in a reproducing kernel Hilbert space. Here, we consider Tikhonov regularization for the non-linear statistical inverse learning problem. We obtain minimax optimal rates of convergence under a general source condition. The assumptions on the non-linear operator~$A$ (see~Assumption~\ref{A.assumption}, and the condition~\eqref{smallness}, below) allow us to estimate the error bounds for the source condition under some additional constraint, which for H\"older source condition ($\phi(t)=t^r$) corresponds to the range~$\frac{1}{2}\leq r\leq 1$.

The structure of the paper is as follows. In Section~\ref{Definitions}, we introduce the  basic setup and notation for supervised learning problems in a reproducing kernel Hilbert space framework. In Sections~\ref{sec:consistency} and~\ref{sec:convergence}, we discuss the  main results of this paper on consistency and error bounds of the regularized least-squares solution~$f_{\zz,\la}$ under certain assumptions on the (unknown) joint probability measure~$\rho$, and on the (non-linear) mapping~$A$. We establish  minimax rates of convergence over the regularity classes defined through appropriate source conditions by using the concept of effective dimension. In Section~\ref{Discussion}, we present a concluding discussion on some further aspects of the results. In the appendix, we establish the concentration inequalities, perturbation results and the proofs of consistency results, upper error bounds and lower error bounds.

\section{Setup and basic definitions}\label{Definitions}
In this section, we discuss the mathematical concepts and definitions used in our analysis. We start with a brief description of the reproducing kernel Hilbert spaces since our approximation schemes will be built in such spaces. The vector-valued reproducing kernel Hilbert spaces are the extension of real-valued reproducing kernel Hilbert spaces, see e.g.~\cite{Micchelli1}.

\begin{definition}
Let~$X$ be a non-empty set,~$(Y,\langle\cdot,\cdot\rangle_Y)$ be a  real separable Hilbert space and~$\HH$ be a Hilbert space~$\HH$ of functions from~$X$ to~$Y$. If the linear functional~$F_{x,y}:\HH \to \RR$, defined by
$$F_{x,y}(f)=\langle y,f(x)\rangle_Y \qquad \forall f \in \HH,$$
is continuous for every~$x \in X$ and~$y\in Y$, then~$\HH$ is called vector-valued reproducing kernel Hilbert space.
\end{definition}

For the Banach space of bounded linear operators~$\mathcal{L}(Y):Y\to Y$, a function~$K:X\times X\to \mathcal{L}(Y)$ is said to be an operator-valued positive semi-definite kernel if for each pair~$(x,z)\in X\times X$,~$K(x,z)^*=K(z,x)$, and for every finite set of points~$\{x_i\}_{i=1}^N\subset X$ and~$\{y_i\}_{i=1}^N\subset Y$,
\[\sum_{i,j=1}^N\langle y_i,K(x_i,x_j)y_j\rangle_Y\geq 0.\]

For every operator-valued positive semi-definite kernel,~$K:X \times X \to \mathcal{L}(Y)$, there exists a unique vector-valued reproducing kernel Hilbert space~$(\HH,\langle\cdot,\cdot\rangle_{\HH})$ of functions from ~$X$ to~$Y$ satisfying the following conditions:
\begin{enumerate}[(i)]
\item For all~$x\in X$ and~$y\in Y$, the function~$K_xy=K(\cdot,x)y$, defined by
  \[z \in X \mapsto (K_xy)(z)=K(z,x)y \in Y,\] belongs to~$\HH$; this allows us to define the linear mapping~$K_x: Y \rightarrow \HH: y \mapsto K_xy$.
\item The span of the set~$\{K_xy:x\in X, y\in Y\}$ is dense in~$\HH$.
\item For all~$f\in \HH$,~$x\in X$ and~$y \in Y$,~$\langle f(x),y\rangle_Y=\langle f,K_xy\rangle_{\HH}$, in other words~$f(x) = K_x^* f$ (reproducing property).
\end{enumerate}

Moreover, there is a one-to-one correspondence between operator-valued positive semi-definite kernels and vector-valued reproducing kernel Hilbert spaces \cite{Micchelli1}. In special case, when~$Y$ is a bounded subset of~$\RR$, the reproducing kernel Hilbert space is said to be real-valued reproducing kernel Hilbert space. In this case, the operator-valued positive semi-definite kernel becomes the symmetric, positive semi-definite kernel~$K:X \times X \to \RR$ and each reproducing kernel Hilbert space~$\HH$ can described as the completion of the span of the set~$\{K_x \in \HH : x \in X \}$ for~$K_x:X \to \RR: t\mapsto K_x(t)=K(x,t)$. Moreover, for every function~$f$ in the reproducing kernel Hilbert space~$\HH$, the reproducing property can be described as~$f(x)=\langle f,K_x\rangle_{\HH}$.  

First, we assume that the input space~$X$ be a Polish space and the output space~$(Y, \langle \cdot,\cdot\rangle_Y)$ be a real separable Hilbert space. Hence, the joint probability measure~$\rho$ on the sample space~$Z=X\times Y$ can be described as~$\rho(x, y) = \rho(y|x)\rho_X(x)$, where~$\rho(y|x)$ is the conditional distribution of~$y$ given~$x$ and~$\rho_X$ is the marginal distribution on~$X$.

We specify the abstract framework for the present study.  We consider that random observations~$\{(x_i,y_i)\}_{i=1}^m$ follow the model~$y= A(f)(x)+\varepsilon$ with the centered noise~$\varepsilon$.

\begin{assumption}[True solution~$f_\rho$] \label{assmpt2} The conditional expectation w.r.t.~$\rho$ of~$y$ given~$x$ exists (a.s.), and there exists~$f_\rho \in \mathrm{int}(\mathcal{D}(A))\subset\HH_1~$ such that
  \begin{equation*}\label{fp}
    \mathbb{E}_y[y |x] =\int_Y y d\rho(y|x)= A(f_\rho)(x), \text{ for all } x\in X.
  \end{equation*}
\end{assumption}
The element~$f_\rho$ is the true solution which we aim at estimating.

\begin{assumption}[Noise condition]\label{noise.cond}
  There exist some constants~$M,\Sigma$ such that for almost all~$x\in X$,
    \begin{equation*}
      \int_Y\left(e^{\norm{y-A(f_\rho)(x)}_Y/M}-\frac{\norm{y-A(f_\rho)(x)}_Y}{M}-1\right)d\rho(y|x)\leq\frac{\Sigma^2}{2M^2}.
    \end{equation*}
  \end{assumption}
This Assumption is usually referred to as a \emph{Bernstein-type assumption}.
  
Concerning the Hilbert space~$\HH_{2}$, we assume the following throughout the paper.
\begin{assumption}[Vector valued reproducing kernel Hilbert space~$\HH_{2}$] \label{assmpt1} 
We assume~$\HH_2$ to be a vector-valued reproducing kernel Hilbert space of functions~$f:X\to Y$ corresponding to the kernel~$K:X\times X\to \mathcal{L}(Y)$ such that
  \begin{enumerate}[(i)]
  \item For all~$x\in X$,~$K_x:Y\to\HH_2$ is a Hilbert-Schmidt
    operator, and
    \[\kappa^2:=\sup_{x \in X} \norm{K_x}^2_{HS} = {\sup_{x \in
          X}\tr(K_x^*K_x)}<\infty,\] implying in particular
    that~$\HH_2\subset \LL$.
  \item The real-valued function~$\varsigma:X\times X \to \RR$, defined by~$\varsigma(x,t)=\langle K_tv,K_xw\rangle_{\HH_2}$, is measurable~$\forall v,w\in Y$.
  \end{enumerate}
\end{assumption}

Note that in case of real-valued functions ($Y\subset\RR$), Assumption~\ref{assmpt1} simplifies to the condition that the kernel is measurable and~$\kappa^2:=\sup_{x \in X} \norm{K_x}^2_{\HH_2}=\sup_{x \in X}K(x,x)<\infty$.

The operator~$I_K$ denotes the canonical injection map~$\HH_2 \to \LL$, that
\[\norm{I_Kf}^2_{\LL}=\int_X\norm{f(x)}_Y^2d\rho_X(x)
  =\int_X\norm{K_x^*f}_{Y}^2d\rho_X(x) \leq
  \kappa^2\norm{f}^2_{\HH_1}.
\]
We denote~$L_{K}:= I_K^{\ast}I_K\colon \HH_{2}\to \HH_{2}$ the corresponding covariance operator.

\section{Consistency}
\label{sec:consistency}


We establish consistency in RMS sense and almost surely of  Tikhonov regularization in the sense that~$\|f_{\zz,\la}-f_\rho\|_{\HH_1}$ as~$|\zz|=m\to\infty$. For this we need weak assumptions on the operator.
\begin{assumption}
  [Lispschitz continuity]\label{ass:Lipschitz}
  We suppose that~$\mathcal{D}(A)$ is weakly closed with nonempty interior and~$A:\mathcal{D}(A)\subset\HH_1\to \HH_2$ is Lipschitz continuous, one-to-one.
\end{assumption}

The inequality~$\norm{I_Kg}_{\LL}\leq\kappa\norm{g}_{\HH_2}$ for~$g\in\HH_2$ and the continuity of the operator~$A:\HH_1\to\HH_2$ implies that~$I_KA:\HH_1\to\HH_2\hookrightarrow\LL$ is also continuous. Since~$\mathcal{D}(A)$ is weakly closed, therefore~$I_KA$ is weakly sequentially closed\footnote{i.e., if a sequence~$(f_m)_{m\in\NN} \subset \mathcal{D}(A)$ converges weakly to some~$f \in \HH_1$ and if the sequence~$(A(f_m))_{m\in\NN}\subset \LL$ converges weakly to some~$g\in \LL$, then~$f\in\mathcal{D}(A)$ and~$A(f)=g$.}. For the continuous and weakly sequentially closed opeator~$A$, there exists a global minimizer of the functional in~(\ref{fzl}). But it is not necessarily unique since~$A$ is non-linear (see \cite[Section~4.1.1]{Schuster}). 

The proofs of Theorems~\ref{thm:consistency-rms},~\ref{thm:asconsistency} will be given in Appendix~\ref{sec:proof-consistency}.

\begin{theorem}\label{thm:consistency-rms}
Suppose that Assumptions~\ref{assmpt2}, \ref{assmpt1}, \ref{ass:Lipschitz} hold true and~$\sigma_\rho^2:=\int_Z\norm{y-A(f_\rho)(x)}_Y^2d\rho(x,y)<\infty$. Let~$f_{\zz,\la}$ denote a (not necessarily unique) solution to the minimization problem~(\ref{fzl}) and assume that the regularization parameter~$\la(m)>0$ is chosen such that
\begin{equation}\label{la.choice1}
\la\to 0,~~~\frac{1}{\la\sqrt{m}}\to 0 \text{   as   } m\to\infty.
\end{equation}
Then we have that
\begin{equation}\label{consistency.rate.exp}
\mathbb{E}_\zz\paren{\|f_{\zz,\la}-f_\rho\|_{\HH_1}^2}\to 0   \text{   as   } \abs{\zz}=m\to\infty.
\end{equation}
\end{theorem}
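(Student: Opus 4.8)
The plan is to transplant the classical convergence theory of Tikhonov regularization for non-linear ill-posed problems---boundedness, weak subsequential limits, weak sequential closedness and injectivity---into the random-sampling setting, using the reproducing kernel structure to tame the noise. Throughout write $\Delta_\zz := A(f_{\zz,\la}) - A(f_\rho)\in\HH_2$ and $c:=\norm{f_\rho-\fbar}_{\HH_1}$. First I would use the minimality of $f_{\zz,\la}$ in~\eqref{fzl}: comparing its objective value with that at the admissible point $f_\rho$, substituting $y_i=A(f_\rho)(x_i)+\varepsilon_i$ and cancelling the common noise term gives
\[
\frac{1}{m}\sum_{i=1}^m\norm{\Delta_\zz(x_i)}_Y^2 + \la\norm{f_{\zz,\la}-\fbar}_{\HH_1}^2 \;\le\; \la c^2 + \frac{2}{m}\sum_{i=1}^m\inner{\Delta_\zz(x_i),\varepsilon_i}_Y .
\]
The key device is the reproducing property $g(x)=K_x^{*}g$, by which the noise term equals $\inner{\Delta_\zz,\xi}_{\HH_2}$ for the single random element $\xi:=\frac{1}{m}\sum_{i=1}^m K_{x_i}\varepsilon_i\in\HH_2$. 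Since $\mathbb{E}[\varepsilon_i\mid x_i]=0$ and $\norm{K_x}\le\kappa$, independence and conditional centering annihilate the cross terms and $\mathbb{E}_\zz\norm{\xi}_{\HH_2}^2\le\kappa^2\sigma_\rho^2/m$; thus $\norm{\xi}_{\HH_2}=O(1/\sqrt m)$ in quadratic mean.

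Next I would invoke Lipschitz continuity (Assumption~\ref{ass:Lipschitz}), $\norm{\Delta_\zz}_{\HH_2}\le L\norm{f_{\zz,\la}-f_\rho}_{\HH_1}$, to reduce the inequality (after dropping the nonnegative misfit) to a scalar quadratic for $a:=\norm{f_{\zz,\la}-\fbar}_{\HH_1}$ involving $c$ and $s:=\norm{\xi}_{\HH_2}$. Solving it yields a purely pathwise bound $a\le c+C\big(s/\la+\sqrt{s/\la}\big)$. Because $\mathbb{E}_\zz[(s/\la)^2]\le\kappa^2\sigma_\rho^2(\la\sqrt m)^{-2}\to0$ under the parameter choice~\eqref{la.choice1}, this single estimate delivers two things at once: (i) the estimators are asymptotically norm-bounded, with $\limsup_m\norm{f_{\zz,\la}-\fbar}_{\HH_1}\le c$ in probability; and (ii) an explicit dominating variable $G_m$ with $\norm{f_{\zz,\la}-f_\rho}_{\HH_1}^2\le G_m$, $G_m\to4c^2$ in probability and $\mathbb{E}_\zz[G_m]\to4c^2$.

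I would then pass from the empirical misfit to the population misfit $\norm{\Delta_\zz}_{\LL}^2=\inner{\Delta_\zz,L_K\Delta_\zz}_{\HH_2}$. The displayed inequality forces $\inner{\Delta_\zz,T_\xx\Delta_\zz}_{\HH_2}\to0$ in probability, where $T_\xx:=\frac1m\sum_i K_{x_i}K_{x_i}^{*}$; since the summands are bounded by $\kappa^2$, the covariance concentration $\norm{T_\xx-L_K}\to0$ (from the appendix) together with the boundedness of $\norm{\Delta_\zz}_{\HH_2}$ bridges the two norms and gives $\norm{I_KA(f_{\zz,\la})-I_KA(f_\rho)}_{\LL}\to0$ in probability. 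The deterministic argument now runs along subsequences: boundedness yields a weak limit $f_{\zz,\la}\rightharpoonup\tilde f$; weak sequential closedness of $I_KA$ forces $I_KA(\tilde f)=I_KA(f_\rho)$, and injectivity (with $\rho_X$-identifiability) gives $\tilde f=f_\rho$; finally $\limsup\norm{f_{\zz,\la}-\fbar}\le c=\norm{f_\rho-\fbar}$ and weak lower semicontinuity pin down norm convergence, which in a Hilbert space upgrades weak to strong convergence, so $\norm{f_{\zz,\la}-f_\rho}_{\HH_1}\to0$ in probability. This coupling---turning control of a random, data-dependent \emph{empirical} misfit into convergence in $\HH_1$ through the \emph{population} weak-closedness property---is the step I expect to be the main obstacle, since it is exactly where the ill-posedness and the non-linearity interact with the sampling randomness.

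It remains to upgrade convergence in probability to the asserted mean-square convergence while using only $\sigma_\rho^2<\infty$ (no higher moments, since Assumption~\ref{noise.cond} is not assumed here). For this I would combine $\norm{f_{\zz,\la}-f_\rho}_{\HH_1}^2\to0$ in probability with the domination $\norm{f_{\zz,\la}-f_\rho}_{\HH_1}^2\le G_m$, $G_m\to4c^2$ in probability and $\mathbb{E}_\zz[G_m]\to4c^2$; a generalized dominated convergence (Pratt) argument, applied along subsequences so as to avoid requiring almost-sure convergence, then yields $\mathbb{E}_\zz\norm{f_{\zz,\la}-f_\rho}_{\HH_1}^2\to0$, which is~\eqref{consistency.rate.exp}.
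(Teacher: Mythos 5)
Your proposal is correct, and its deterministic core coincides with the paper's proof: the same comparison of the Tikhonov objective at $f_{\zz,\la}$ and at $f_\rho$, the same reduction of the noise term via the reproducing property to $\inner{\Delta_\zz,\xi}_{\HH_2}$ with $\xi=\frac{1}{m}\sum_{i=1}^m K_{x_i}\varepsilon_i$ and $\mathbb{E}_\zz\norm{\xi}_{\HH_2}^2\leq \kappa^2\sigma_\rho^2/m$ (this is exactly \eqref{con.ineq1}), the same use of the concentration of $S_\xx^*S_\xx$ around $L_K$ to pass from the empirical to the population misfit, and the same endgame via weak sequential closedness plus injectivity of $I_KA$ followed by the ``weak convergence together with convergence of norms implies strong convergence'' mechanism. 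What is genuinely different is the probabilistic architecture. The paper never leaves the level of expectations: from \eqref{c2.exp} it gets uniform boundedness of second moments, extracts by a diagonal argument over an orthonormal basis a subsequence along which $\mathbb{E}_{\zz(k)}\paren{\inner{f_{\zz(k),\la}-f_\rho,f}_{\HH_1}}$ converges for every $f$ (the ``expectation-weak limit'' of \eqref{Exp.fz}), proves $\mathbb{E}_\zz\paren{\norm{I_K\brac{A(f_{\zz,\la})-A(f_\rho)}}_{\LL}^2}\to 0$ in \eqref{Exp.Afz}, identifies that limit as zero, and concludes by a contradiction argument built on the identity \eqref{E.c4}. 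You instead work pathwise: along any subsequence you pass to a further almost-surely convergent one, run the classical non-linear Tikhonov argument (boundedness, weak cluster points, closedness and injectivity, Radon--Riesz) to obtain convergence in probability, and only at the very end integrate, upgrading to mean-square convergence via Pratt's generalized dominated convergence with the explicit dominating sequence $G_m$. Your arrangement has a real advantage: weak sequential closedness is a pathwise property of $I_KA$, and you invoke it only pathwise, whereas the paper applies it to an expectation-weak limit by combining \eqref{Exp.fz} with an almost-sure statement --- since a non-linear $A$ does not commute with expectations, this is the least transparent step of the paper's proof, and your scheme avoids it entirely, at the price of sub-subsequence bookkeeping and the Pratt argument. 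What the paper's route buys in exchange is directness: the second-moment claim drops out of the contradiction argument without any dominated-convergence machinery, together with the quantitative intermediate bound \eqref{c2.exp}. Both arguments use only the second moment $\sigma_\rho^2$ of the noise, consistent with Remark~\ref{rem:2ndmoment}, and both need injectivity of the composition $I_KA$ rather than of $A$ alone, a point you correctly flag as ``$\rho_X$-identifiability''.
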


\begin{remark}\label{rem:2ndmoment}
As can be seen from the proof, the existence of arbitrary  moments, as required in Assumption~\ref{noise.cond} is not needed. Instead, only the existence of second moments is used, as seen from the introduction of~$\sigma_{\rho}$.
\end{remark}
The previous result can be strengthened as follows.
\begin{theorem}\label{thm:asconsistency}
Suppose that Assumptions~\ref{assmpt2}--\ref{ass:Lipschitz} hold true. Let~$f_{\zz,\la}$ denote a (not necessarily unique) solution to the minimization problem~(\ref{fzl}) and assume that the regularization parameter~$\la(m)>0$ is chosen such that
\begin{equation}\label{la.choice}
\la\to 0,~~~\frac{\log m}{\la\sqrt{m}}\to 0 \text{   as   } m\to\infty.
\end{equation}
Then we have that
\begin{equation}\label{consistency.rate}
\|f_{\zz,\la}-f_\rho\|_{\HH_1}\to 0   \text{  almost surely as   } m\to\infty.
\end{equation}
\end{theorem}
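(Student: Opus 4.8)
The plan is to re-use the error decomposition underlying Theorem~\ref{thm:consistency-rms} and to upgrade the control of the stochastic terms from convergence in mean to \emph{almost sure} convergence, the extra $\log m$ factor in~\eqref{la.choice} being exactly what is needed to make a Borel--Cantelli argument go through. Writing $h:=A(f_{\zz,\la})-A(f_\rho)\in\HH_2$ and exploiting the minimality of $f_{\zz,\la}$ in~\eqref{fzl} together with $y_i=A(f_\rho)(x_i)+\varepsilon_i$, the usual expansion and cancellation of $\frac1m\sum_i\norm{\varepsilon_i}_Y^2$ yields the comparison inequality
\begin{equation*}
\frac1m\sum_{i=1}^m\norm{h(x_i)}_Y^2+\la\norm{f_{\zz,\la}-\fbar}_{\HH_1}^2\leq 2\inner{h,g_\zz}_{\HH_2}+\la\norm{f_\rho-\fbar}_{\HH_1}^2,
\end{equation*}
where $g_\zz:=\frac1m\sum_{i=1}^m K_{x_i}\varepsilon_i\in\HH_2$ is the noise term. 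Using the reproducing property for the cross term and the Lipschitz continuity of $A$ with constant $L$ (Assumption~\ref{ass:Lipschitz}) one bounds $\inner{h,g_\zz}_{\HH_2}\leq L\norm{f_{\zz,\la}-f_\rho}_{\HH_1}\norm{g_\zz}_{\HH_2}$; hence the whole argument reduces to controlling the single random quantity $\norm{g_\zz}_{\HH_2}$.

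First I would establish that $\norm{g_\zz}_{\HH_2}\to 0$ almost surely, with rate $O(\log m/\sqrt m)$. The summands $K_{x_i}\varepsilon_i$ are i.i.d., centered (by $\EE[\varepsilon_i\mid x_i]=0$ and Assumption~\ref{assmpt2}), and the Bernstein moment bound of Assumption~\ref{noise.cond} together with $\norm{K_x}_{HS}\leq\kappa$ (Assumption~\ref{assmpt1}) puts them in the scope of a Bernstein inequality for Hilbert-space-valued variables, giving
\begin{equation*}
\mathbb{P}_{\zz}\brac{\norm{g_\zz}_{\HH_2}\leq C\paren{\frac{M}{m}+\frac{\Sigma}{\sqrt m}}\log\frac{2}{\eta}}\geq 1-\eta
\end{equation*}
for a constant $C$ depending only on $\kappa$. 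Choosing $\eta=\eta_m=2m^{-2}$ makes the exceptional probabilities summable, so Borel--Cantelli gives $\norm{g_\zz}_{\HH_2}\leq C'\log m/\sqrt m$ for all large $m$, almost surely. By the hypothesis $\log m/(\la\sqrt m)\to 0$ in~\eqref{la.choice} this forces $\norm{g_\zz}_{\HH_2}/\la\to 0$ almost surely, which is the pathwise statement on which everything else rests.

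On the full-measure event just produced I would then run the deterministic Tikhonov consistency argument pathwise. Solving the quadratic inequality above (with $a=\norm{f_{\zz,\la}-\fbar}_{\HH_1}$, $b=\norm{f_\rho-\fbar}_{\HH_1}$, $c=\norm{g_\zz}_{\HH_2}$) gives $a\leq C_1\paren{b+Lc/\la}$, so $\{f_{\zz,\la}\}$ stays almost surely bounded in $\HH_1$ and, reinserting this bound, the right-hand side $2\inner{h,g_\zz}_{\HH_2}+\la b^2\to 0$, whence the empirical misfit $\frac1m\sum_i\norm{h(x_i)}_Y^2\to 0$ almost surely. Passing from this empirical misfit to the population misfit $\norm{I_K h}_{\LL}$ requires an almost sure, uniform-over-bounded-balls comparison of the empirical and $\rho_X$-norms, again obtained from a concentration inequality plus Borel--Cantelli. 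Boundedness then furnishes a weakly convergent subsequence $f_{\zz,\la}\rightharpoonup\tilde f$; weak sequential closedness of $I_KA$ together with $\norm{I_K h}_{\LL}\to 0$ forces $I_KA(\tilde f)=I_KA(f_\rho)$, and injectivity of $I_K$ and of $A$ (Assumption~\ref{ass:Lipschitz}) gives $\tilde f=f_\rho$. Finally $\limsup a\leq b$ combined with weak lower semicontinuity of the norm yields $\norm{f_{\zz,\la}-\fbar}_{\HH_1}\to\norm{f_\rho-\fbar}_{\HH_1}$, which upgrades weak to norm convergence; since the limit along every weakly convergent subsequence equals $f_\rho$, the whole sequence converges, proving~\eqref{consistency.rate}.

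The main obstacle is precisely the non-linearity of $A$: unlike the linear case there is no closed form for $f_{\zz,\la}$, so one must work throughout with the minimality (comparison) inequality and recover the limit only through a weak-compactness argument that leans on weak sequential closedness and injectivity rather than on a direct quantitative bound. The second delicate point is the stochastic bookkeeping: all of the random terms---the noise term $\norm{g_\zz}_{\HH_2}$ and the empirical-versus-population norm deviation used to pass from empirical to population misfit---must be made to converge on one common full-measure set, which is where the exponential Bernstein tails (hence Assumption~\ref{noise.cond}, not merely the second moments of Remark~\ref{rem:2ndmoment}) and the $\log m$ strengthening of the parameter choice in~\eqref{la.choice} are indispensable.
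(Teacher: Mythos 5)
Your proposal is correct and follows essentially the same route as the paper's proof: the same minimality-based comparison inequality, the same Bernstein-type concentration for $\norm{S_\xx^*\{S_\xx A(f_\rho)-\yy\}}_{\HH_2}$ and for $\norm{S_\xx^*S_\xx-L_K}_{\mathcal{L}(\HH_2)}$ with $\eta_m\sim m^{-2}$ and Borel--Cantelli, pathwise boundedness, weak subsequential limits identified through weak sequential closedness and injectivity of $I_KA$, and the final weak-to-strong upgrade. Your last step (norm convergence plus weak convergence, i.e.\ the Radon--Riesz property) is exactly the computation the paper carries out inline via the identity~(\ref{c4}) in its contradiction argument, so the two proofs coincide in substance.
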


\section{Convergence rates}
\label{sec:convergence}

In order to derive rates of convergence additional assumptions are made on the operator~$A$. We need to introduce the corresponding notion of smoothness of the true solution~$f_\rho$ from Assumption~\ref{assmpt2}. We discuss the class of probability measures defined through the appropriate source condition which describe the smoothness of the true solution.

Following the work of Engl et al. \cite[Chapt.~10]{Engl} on `classical' non-linear inverse problems, we consider the following assumption:

\begin{assumption}[Non-linearity of the operator]\label{A.assumption}
  We assume that~$\mathcal{D}(A)$ is convex with nonempty interior,~$A:\mathcal{D}(A)\subset\HH_1\to \HH_2\hookrightarrow\LL$ is weakly sequentially closed and one-to-one. Furthermore, we assume that
  \begin{enumerate}[(i)]
  \item~$A$ is Fr\'{e}chet differentiable, 
  \item the Fr\'{e}chet derivative~$A'(f)$ of~$A$ at~$f$ is bounded in a sufficiently large ball~$\mathcal{B}_d(f_\rho)$, i.e., there exists~$L < \infty$ such that
   ~$$
    \norm{A'(f)}_{\HH_1\to\HH_2}\leq L \qquad \forall~f\in
    \mathcal{B}_d(f_\rho)\cap\mathcal{D}(A)\subset\HH_1,
   ~$$
    and
  \item \label{it:gamma}there exists~$\gamma\geq 0$ such that for
    all~$f\in \mathcal{B}_d(f_\rho)\cap\mathcal{D}(A)\subset\HH_1$ we have,
$$\norm{I_K\brac{A(f)-A(f_\rho)-A'(f_\rho)(f-f_\rho)}}_{\LL} \leq \frac{\gamma}{2}\norm{f-f_\rho}_{\HH_1}^2.$$  
\end{enumerate}
\end{assumption}
\begin{remark}
The condition~(\ref{it:gamma}) also holds true under the stronger assumption that~$A'$ is Lipschitz for the operator norm (see \cite[Chapt.~10]{Engl}), i.e.,\ 
$$\norm{I_K\brac{A'(f)-A'(f_\rho)}}_{\HH_1\to\LL}\leq \gamma\norm{f-f_\rho}_{\HH_1}.$$A sufficient condition for weak sequential closedness is that~$\mathcal{D}(A)$ is weakly closed (e.g. closed and convex) and~$A$ is weakly continuous. Note that under the Fr{\'e}chet differentiability of~$A:\DD(A)\subset\HH_1\to\HH_2$ (Assumption~\ref{A.assumption} (ii)), the operator~$A$ is Lipschitz continuous with Lipschitz constant~$L$. 
\end{remark}

To illustrate the general setting, we consider a family of integral operators on the Sobolev space satisfying the above assumptions, where the kernel~$K$ is completely explicit.

\begin{example}
  Let~$\HH_1=\HH_2=\HH$ be the Sobolev space~$W^{k,2}(\RR^d)$ of differential order~$k$ (based on $\mathscr{L}^2(\RR^d,\rho_X;\RR)$), for the integer~$k>\frac{d}{2}$, which is defined as the completion of~$C_c^\infty(\RR^d)$ with respect to the norm given by:
  \[
    \norm{f}^2_{\HH}=\norm{f}^2_{W^{k,2}(\RR^d)}={\sum_{\nu=0}^k\sum_{\al\in\ZZ_+^d,|\al|\leq\nu}\frac{\nu!}{\al!}
      \binom{k}{\nu} \int_{\RR^d}\abs{\frac{\partial^\nu
          f(x)}{\partial x^\nu}}^2dx}.\] The Sobolev space~$W^{k,2}(\RR^d)$ is a reproducing kernel Hilbert space with the reproducing kernel~$K$, given by (see \cite[Sec.~1.3.5]{Saitoh})
  \[K(x,y)=\frac{1}{(2\pi)^d}\int_{\RR^d}\frac{\exp(i \langle x-y,\xi\rangle)}{(1+\norm{\xi}^2)^k}d\xi, \qquad x,y\in\RR^d,\]
  where~$\norm{\cdot}$ is the Euclidean norm in~$\RR^d$.

  It satisfies Assumption~\ref{assmpt1} with~$\kappa^2 := (2\pi)^{-d}\int_{\RR^d} (1 + \norm{\xi}^2)^{-k}d\xi <\infty$.  We consider the non-linear operator~$A:\HH\to\HH$ given by:
  \begin{equation*}
    [A(f)](x)=\int_{\RR^d}\vartheta(x,s)(f(s))^2d\mu(s), \qquad x\in
    \RR^d,~f\in\mathcal{D}(A)\subset\HH,
  \end{equation*}
  where~$\vartheta:\RR^d\times \RR^d\to\RR$ is~$k$-times differentiable. It can be checked that~$A(f)\in \HH$, with
  \[
    \norm{A(f)}_{\HH} \leq \kappa^2 \norm{f}^2_{\HH} C_k(\theta),
  \]
  where
  \[
    C_k(\theta) :=
    \left[\sum_{\nu=0}^k\sum_{\al\in\ZZ_+^d,|\al|\leq\nu}\frac{\nu!}{\al!}
      \binom{k}{\nu}
      \int_{\RR^d}\left(\int_{\RR^d}\abs{\frac{\partial^\nu
            \vartheta(x,s)}{\partial
            x^\nu}}d\mu(s)\right)^2dx\right]^{1/2}
  \]
  (assumed to be finite).

  The Fr{\'e}chet derivative of~$A$ at~$f$ is given by
  \begin{equation*}
    [A'(f)g](x)=2\int_{\RR^n}\vartheta(x,s)f(s)g(s)d\mu(s).
  \end{equation*}
  Then we have
  \begin{equation*}
    \norm{A'(f_\rho)g}_{\HH}\leq 2\kappa^2\norm{f_\rho}_{\HH}\norm{g}_{\HH} C_k(\theta),
  \end{equation*}
  and
  \begin{equation*}
    \norm{I_K\brac{A'(f)g-A'(f_\rho)g}}_{\mathscr{L}^2(\RR^d,\rho_X;\RR)}\leq \kappa\norm{A'(f)g-A'(f_\rho)g}_{\HH}\leq 2\kappa^3\norm{f-f_\rho}_{\HH}\norm{g}_{\HH} C_k(\theta),
  \end{equation*}
  so that Assumption~\ref{A.assumption} is satisfied.
\end{example}

Under the above non-linearity assumption on the operator~$A$ we now introduce the corresponding operators which will turn out to be useful in the analysis of regularization schemes. 

We recall that~$I_K$ denotes the canonical injection map~$\HH_2 \to \LL$. We define the operator
\begin{align*}\label{Sf}
  B : \HH_1 &\to \LL \\ \nonumber
  f  &\mapsto Bf=B_\rho f := (I_K \circ (A'(f_\rho)))(f)=I_K(A'(f_\rho)f).
\end{align*}

We denote~$T=T_\rho:= B_\rho^{\ast}B_\rho\colon \HH_1\to \HH_1$ the corresponding covariance operator. The operators~$L_K$ from Section~\ref{Definitions}, and ~$T$ are positive, self-adjoint and compact operators, even trace-class operators.

Observe that the operator~$B$ depends on~$I_K$ and~$f_\rho$, thus on the joint probability measure~$\rho$ itself.  It is bounded and satisfies~$\norm{B}_{\HH_1 \to \LL}\leq\kappa L$.

The consistency results as established in Section~\ref{sec:consistency}, yield convergence of the minimizers~$f_{\zz,\la}$, as~$\abs{\zz}=m$ tends to infinity, and the parameter~$\lambda$ is chosen appropriately. However, the rates of convergence may be arbitrarily slow. This phenomenon is known as the no free lunch theorem~\cite{Devroye}. Therefore, we need some prior assumptions on the probability measure~$\rho$ in order to achieve uniform rates of convergence for learning algorithms. 
\begin{assumption}[General source condition]\label{source.cond}
 The true solution~$f_\rho$ belongs to the class~$\Omega(\rho,\phi,R)$ with
    \begin{equation*}
      \Omega(\rho,\phi,R):=\left\{f \in \HH_1: f-\fbar=\phi(T)g \text{ and }\norm{g}_{\HH_1} \leq R\right\},
    \end{equation*}
    where~$\phi$ is a continuous increasing index function defined on the interval~$[0,\kappa^2L^2]$ with the assumption~$\phi(0)=0$.
  \end{assumption}
The general source condition~$f_\rho\in\Omega(\rho,\phi,R)$, by allowing for the index functions~$\phi$, cover a wide range of source conditions, such as H\"{o}lder source condition~$\phi(t)=t^r$ with~$r\geq 0$, and logarithmic-type source condition~$\phi(t)=t^p\log^{-\nu}\left(\frac{1}{t}\right)$ with~$p\in\NN,~\nu\in[0,1]$. The source sets~$\Omega(\rho,\phi,R)$ are precompact sets in~$\HH_1$, since the operator~$T$ is
compact. Observe that in contrast with the linear case, in the equation~$f_\rho-\fbar=\phi(T)g$ from Assumption~\ref{source.cond}, the true solution~$f_\rho$ appears 
on both sides, since the operator~$T$ itself depends on it (through~$A'(f_\rho)$). This condition is more easily interpreted as a condition on the ``initial guess''~$\fbar$, so that the initial error~$(\fbar-f_\rho)$ should satisfy a source condition with respect to the operator linearized at the true solution. Assumption~\ref{source.cond} is usually referred to as a general source condition, see e.g.~\cite{Mathe}, which is a measure of regularity of the true solution~$f_\rho$. This is inspired, on the one hand, by the approach considered in previous works on statistical learning using kernels, and, on the other hand, by the ``classical" literature on non-linear inverse problems. The true solution~$f_\rho$ is represented in terms of the marginal probability distribution~$\rho_X$ over the input space~$X$, and of the linearized operator at the true solution, respectively. Both aspects enter into Assumption~\ref{source.cond}.

Following the concept of Bauer et al.~\cite{Bauer}, and Blanchard et al.~\cite{Blanchard}, we consider the class of probability measures~$\PP_\phi$ which satisfy both the noise assumption~\ref{noise.cond} and which allow for the smoothness assumption~\ref{source.cond}. This class depends on the observation noise distribution  (reflected in the parameters~$M > 0$,~$\Sigma > 0$) and the smoothness properties of the true solution~$f_\rho$ (reflected in the parameters~$R > 0$,~$\phi > 0$).  For the convergence analysis, the output space need not be bounded as long as the noise condition for the output variable is fulfilled. 

The class~$P_{\phi}$ may further be constrained, by imposing properties of the covariance operator~$L_K$ from above. Thus we consider the set of probability  measures~$\PP_{\phi,b}\subset P_{\phi}$ which also satisfy the following condition:
\begin{assumption}[Eigenvalue decay condition]\label{poly.decay}
The eigenvalues~$(t_n)_{n\in\NN}$ of the covariance  operator~$L_K$ follow a polynomial decay, i.e.,\ for fixed positive constants~$\beta$ and~$b>1$,
\begin{equation*}
 t_n\leq\beta n^{-b}~~\forall n\in\NN.
\end{equation*}
\end{assumption}

Now under Assumption~\ref{A.assumption} (ii) using the relation for singular values~$s_{j}(UV) \leq \norm{U} s_{j}(V)$ for~$j\in\NN$ (see Chapter 11 \cite{Pietsch}) we obtain,
\begin{equation*}
s_{j}(T) \leq \norm{A'(f_\rho)}_{\HH_1\to\HH_2}^2 s_{j}(L_K) \leq L^2 s_{j}(L_K).
\end{equation*}
Hence the polynomial decay condition on eigenvalues of the operator $L_K$ implies that the eigenvalues of $T$ also follows the polynomial decay.

We achieve optimal minimax rates of convergence using the concept of \emph{effective dimension} of the operator~$L_K$. For the trace class operator~$L_K$, the effective dimension is defined as
$$
\mathcal{N}(\la)=\mathcal{N}_{L_K}(\la):=\tr\left((L_K+\la
  I)^{-1}L_K\right),\quad  \text{  for }\la>0.
$$
For the infinite dimensional operator~$L_K$, the effective dimension is a continuously decreasing function of~$\la$ from~$\infty$ to~$0$. For further discussion on the effective dimension, we refer to the literature \cite{Lin,Lu16}.

Under Assumptions~\ref{assmpt1},~\ref{A.assumption} (ii), the effective dimension~$\mathcal{N}(\la)$ can trivially be estimated as follows,
\begin{equation}\label{N(l).bd}
  \mathcal{N}(\la)\leq \norm{(L_K+\la
    I)^{-1}}_{\mathcal{L}(\HH_2)}\tr\left(L_K\right) \leq
  \frac{\kappa^2}{\la},\quad \lambda>0.
\end{equation}
However, we know from~\cite[Prop.~3]{Caponnetto} that, under Assumption~\ref{poly.decay}, we have the improved bound
\begin{equation}\label{N(l).bound}
  \mathcal{N}(\la) \leq C_{\beta,b}\la^{-1/b},\quad \text{ for }b>1.
\end{equation}

\subsection{Upper rates of convergence}
In Theorems~\ref{converge}--\ref{convergence}, we present the upper error bounds for the regularized least-squares solution~$f_{\zz,\la}$ over the class of probability measures~$\mathcal{P}_\phi$. We establish the error bounds for both the direct learning setting in the sense of the~$\LL$-norm reconstruction error~$\norm{I_K\brac{A(f_{\zz,\la})-A(f_\rho)}}_{\LL}$ and the inverse problem setting in the sense of the~$\HH_1$-norm reconstruction error~$\norm{f_{\zz,\la}-f_\rho}_{\HH_1}$. Since the explicit expression of~$f_{\zz,\la}$ is not known, we use the definition~\eqref{fzl} of the regularized least-squares solution~$f_{\zz,\la}$ to derive the error bounds. We use the linearization techniques for the operator~$A$ in the neighborhood of the true solution~$f_\rho$ under the (Fr{\'e}chet) differentiability of~$A$. We estimate the error bounds for the regularized least-squares estimator by measuring the complexity of the true solution~$f_\rho$ and the effect of random sampling. The rates of convergence are governed by the noise condition (Assumption~\ref{noise.cond}), the general source condition (Assumption~\ref{source.cond}) and the ill-posedness of the problem, as measured by an assumed power decay~(Assumption~\ref{poly.decay}) of the eigenvalues of~$T$ with exponent~$b > 1$. The effect of random sampling and the complexity of~$f_\rho$ are measured through Assumption~\ref{noise.cond} and Assumption~\ref{source.cond} in Proposition~\ref{main.bound} and Proposition~\ref{approx.err}, respectively. We briefly discuss two additional assumptions of the theorem. Condition~\eqref{l.la.condition.k}  below says that as the regularization parameter~$\la$ decreases, the sample size must increase. This condition will be automatically satisfied under the parameter choice considered later in Theorem~\ref{err.upper.bound.p.para}. The additional assumption~\eqref{smallness} is a ``smallness'' condition which imposes a constraint between~$\norm{w}_{\HH_1}$ and the non-linearity as measured by the parameter~$\gamma$ in Assumption~\ref{A.assumption} (iii). In order for the latter norm to be finite for any function satisfying the source condition~$f_\rho\in \Omega(\rho,\phi,R)$, it requires that~$\phi(t)/\sqrt{t}$ remains bounded near 0, in particular if~$\phi(t)=t^r$, that~$r\geq \frac{1}{2}$.


The error bound discussed in the following theorem holds non-asymptotically, but this holds with sufficiently small regularization parameter~$\la$ and sufficiently large sample size~$m$. For fixed~$\eta$ and~$\la$, we can choose sufficiently large sample size~$m$ such that
\begin{equation}\label{l.la.condition.k}
8\kappa^2\max\paren{1,\frac{L(M+\Sigma)}{\kappa d}}\log\paren{\frac{4}{\eta}} \leq \sqrt{m}\la.
\end{equation}

Under the source condition~$f_\rho-\fbar=\phi(T)g$ for~$\phi(t)=\sqrt{t}\psi(t)$, we have that~$f_\rho-\fbar=T^{1/2}\psi(T)g=T^{1/2}w$ for~$\psi(T)g=w$. We assume that 
\begin{equation}\label{smallness}
2\gamma  \norm{w}_{\HH_1}<1.
\end{equation}

The proofs of Theorems~\ref{converge}--\ref{err.upper.bound.p.para} will be given in Appendix~\ref{sec:proof-upper-rates}.

\begin{theorem}\label{converge}
Let~$\zz$ be i.i.d. samples drawn according to the probability measure~$\rho\in \PP_{\phi}$ where~$\phi(t)=\sqrt{t}$. Suppose Assumptions~\ref{fp}--\ref{assmpt1},~\ref{A.assumption}--\ref{source.cond} and the conditions~\eqref{l.la.condition.k},~\eqref{smallness} hold true. Then, for all~$0<\eta<1$, for the regularized least-squares estimator~$f_{\zz,\la}$ (not necessarily unique) in~(\ref{fzl}) with the confidence~$1-\eta$ the following upper bound holds:
\begin{equation*}
\|f_{\zz,\la}-f_\rho\|_{\HH_1} \leq C_1 \brac{R\sqrt{\la}+\frac{\kappa M}{m\la}+\sqrt{\frac{\Sigma^2\mathcal{N}(\la)}{m\la}}}\log\left(\frac{4}{\eta}\right)
\end{equation*} 
and 
\begin{equation*}
\|I_K\brac{A(f_{\zz,\la})-A(f_\rho})\|_{\LL} \leq C_2\sqrt{\la}\brac{R\sqrt{\la}+\frac{\kappa M}{m\la}+\sqrt{\frac{\Sigma^2\mathcal{N}(\la)}{m\la}}}\log\left(\frac{4}{\eta}\right),
\end{equation*}
where~$C_1$ and~$C_2$ depends on the parameters~$\gamma$,~$L$,~$R$.
\end{theorem}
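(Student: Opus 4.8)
The plan is to adapt the classical error analysis of Tikhonov regularization for non-linear ill-posed problems (\cite[Chapt.~10]{Engl}) to the statistical sampling framework, replacing the deterministic noise level by probabilistic concentration bounds that carry the effective dimension~$\mathcal{N}(\la)$. Since no closed form for~$f_{\zz,\la}$ is available, I would start from the minimizing property in~\eqref{fzl}: evaluating the empirical functional at~$f_{\zz,\la}$ and at~$f_\rho$, inserting~$y_i=A(f_\rho)(x_i)+\varepsilon_i$, and cancelling the common empirical noise energy~$\frac{1}{m}\sum_i\norm{\varepsilon_i}_Y^2$, one obtains the basic inequality
\begin{equation*}
\frac{1}{m}\sum_{i=1}^m\norm{A(f_{\zz,\la})(x_i)-A(f_\rho)(x_i)}_Y^2+\la\norm{f_{\zz,\la}-\fbar}_{\HH_1}^2\leq 2\inner{A(f_{\zz,\la})-A(f_\rho),\xi}_{\HH_2}+\la\norm{f_\rho-\fbar}_{\HH_1}^2,
\end{equation*}
where~$\xi:=\frac{1}{m}\sum_{i=1}^mK_{x_i}\varepsilon_i\in\HH_2$ gathers the sampling noise and I have used the reproducing property~$A(f)(x_i)=K_{x_i}^\ast A(f)$ to move the cross term into~$\HH_2$.

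Next I would expand~$\norm{f_{\zz,\la}-\fbar}_{\HH_1}^2=\norm{f_{\zz,\la}-f_\rho}_{\HH_1}^2+2\inner{f_{\zz,\la}-f_\rho,f_\rho-\fbar}_{\HH_1}+\norm{f_\rho-\fbar}_{\HH_1}^2$ and cancel~$\la\norm{f_\rho-\fbar}_{\HH_1}^2$ on both sides, so that the empirical misfit and~$\la\norm{f_{\zz,\la}-f_\rho}_{\HH_1}^2$ remain on the left. The source condition enters through the cross term: with~$\phi(t)=\sqrt t$, Assumption~\ref{source.cond} gives~$f_\rho-\fbar=T^{1/2}w$ with~$\norm{w}_{\HH_1}\leq R$, whence~$\inner{f_{\zz,\la}-f_\rho,f_\rho-\fbar}_{\HH_1}=\inner{B(f_{\zz,\la}-f_\rho),v}_{\LL}$ for a suitable~$v\in\LL$ with~$\norm{v}\leq R$ (using~$B^\ast B=T$). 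I then replace the linearized increment~$B(f_{\zz,\la}-f_\rho)=I_KA'(f_\rho)(f_{\zz,\la}-f_\rho)$ by the genuine non-linear increment~$I_K\{A(f_{\zz,\la})-A(f_\rho)\}$; by Assumption~\ref{A.assumption}\,(iii) the discrepancy is bounded in~$\LL$ by~$\tfrac{\gamma}{2}\norm{f_{\zz,\la}-f_\rho}_{\HH_1}^2$, contributing a self-referential term of size~$\la\gamma\norm{w}\,\norm{f_{\zz,\la}-f_\rho}_{\HH_1}^2$ to the right-hand side.

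The key step, and the main obstacle, is to dispose of this quadratic remainder. Because the smallness condition~\eqref{smallness} reads~$2\gamma\norm{w}_{\HH_1}<1$, after moving the remainder to the left the coefficient of~$\norm{f_{\zz,\la}-f_\rho}_{\HH_1}^2$ stays bounded below by~$\tfrac{\la}{2}$, so the self-referential term is absorbed and one retains a genuine positive multiple of~$\norm{f_{\zz,\la}-f_\rho}_{\HH_1}^2$ together with the empirical misfit on the left. The delicate point is that the non-linearity couples precisely the quantity one wants to estimate with the remainder one wants to discard; the absorption must therefore be performed while keeping the misfit~$\norm{I_K\{A(f_{\zz,\la})-A(f_\rho)\}}_{\LL}$ in reserve, since it will also be needed to dominate the noise cross term. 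This is exactly where~\eqref{smallness}, and hence the restriction~$r\geq\tfrac12$, becomes unavoidable; it is also where the dependence of the constants on~$\gamma$ and~$R$ originates.

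It then remains to control the two genuinely statistical pieces. The residual noise term is split by Cauchy--Schwarz as~$2\inner{A(f_{\zz,\la})-A(f_\rho),\xi}_{\HH_2}\leq 2\norm{(L_K+\la I)^{1/2}(A(f_{\zz,\la})-A(f_\rho))}_{\HH_2}\norm{(L_K+\la I)^{-1/2}\xi}_{\HH_2}$, where the first factor is handled by a Young split against the misfit plus the Lipschitz bound~$\sqrt\la\,L\norm{f_{\zz,\la}-f_\rho}_{\HH_1}$ (the source of the~$L$-dependence in the constants), while the second is bounded with probability~$1-\eta$ by the Bernstein-type estimate of Proposition~\ref{main.bound}, yielding~$\tfrac{\kappa M}{m\sqrt\la}+\sqrt{\Sigma^2\mathcal{N}(\la)/m}$ up to a~$\log(4/\eta)$ factor; the term~$-2\la\inner{I_K\{A(f_{\zz,\la})-A(f_\rho)\},v}_{\LL}$ produces, after a Young split and Proposition~\ref{approx.err}, the bias contribution of order~$\la^2 R^2$. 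Throughout, passing between the empirical operators and their population counterparts~$T,L_K$ (needed to absorb the population misfit~$\norm{I_K\{A(f_{\zz,\la})-A(f_\rho)\}}_{\LL}$ into the empirical one on the left) relies on perturbation estimates valid precisely under condition~\eqref{l.la.condition.k}. Collecting the terms leaves, on the left, both the empirical misfit and~$\tfrac{\la}{2}\norm{f_{\zz,\la}-f_\rho}_{\HH_1}^2$ dominated by the same right-hand side; dividing the~$\la$-weighted term by~$\la$ and taking a square root produces the three summands~$R\sqrt\la$,~$\tfrac{\kappa M}{m\la}$ and~$\sqrt{\Sigma^2\mathcal{N}(\la)/(m\la)}$, which is the first assertion. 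The~$\LL$-reconstruction bound is then read off directly from the misfit retained on the left: bounded by the very same right-hand side but \emph{without} the division by~$\la$, and identified with~$\norm{I_K\{A(f_{\zz,\la})-A(f_\rho)\}}_{\LL}$ up to the~$\gamma$-remainder, it carries exactly one extra power of~$\sqrt\la$ relative to the~$\HH_1$-estimate, accounting for the prefactor~$C_2\sqrt\la$.
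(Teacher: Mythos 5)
Your route is essentially the paper's own: starting from the minimizing property of $f_{\zz,\la}$, expanding the penalty, feeding in the source condition $f_\rho-\fbar=T^{1/2}w$, trading the linearized increment $B(f_{\zz,\la}-f_\rho)$ for the true increment $I_K\brac{A(f_{\zz,\la})-A(f_\rho)}$ at the price of a quadratic remainder, absorbing that remainder via \eqref{smallness}, splitting the noise term with the weights $(L_K+\la I)^{\pm1/2}$ and invoking Proposition~\ref{main.bound}, and handling the empirical-versus-population operator discrepancy under \eqref{l.la.condition.k}; your Young splits and the paper's completion of squares are equivalent bookkeeping, and the mechanism by which the prediction bound gains the factor $\sqrt{\la}$ is identified correctly.

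There is, however, one genuine gap. You apply the remainder bound of Assumption~\ref{A.assumption}(iii) to $f_{\zz,\la}$, but that bound is only postulated for $f\in\mathcal{B}_d(f_\rho)\cap\mathcal{D}(A)$, and nothing in your argument shows that the (implicitly defined, not necessarily unique) minimizer lies in this ball. This must be established \emph{before} the linearization step, and it is not automatic: the paper does it by recycling the crude consistency-type estimate \eqref{c2},
\begin{equation*}
\|f_{\zz,\la}-f_\rho\|_{\HH_1} \leq \frac{4\kappa(M+\Sigma)L}{\la\sqrt{m}}\log\paren{\frac{4}{\eta}}+2\norm{f_\rho-\fbar}_{\HH_1},
\end{equation*}
which holds with confidence $1-\eta/2$ and, combined with \eqref{l.la.condition.k}, gives $f_{\zz,\la}\in\mathcal{B}_d(f_\rho)$ for $d>4\norm{f_\rho-\fbar}_{\HH_1}$. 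Note that the factor $\max\paren{1,L(M+\Sigma)/(\kappa d)}$ in \eqref{l.la.condition.k} exists precisely to make this work, whereas you use \eqref{l.la.condition.k} only for the operator perturbation $\norm{S_\xx^*S_\xx-L_K}$; this preliminary event also consumes part of the confidence budget, which is why the final statement carries $\log(4/\eta)$ at level $1-\eta$. Without this step the self-referential quadratic term you absorb is not even known to obey the bound $\frac{\gamma}{2}\norm{f_{\zz,\la}-f_\rho}_{\HH_1}^2$, so the absorption argument has nothing to act on. A second, minor point: you cite Proposition~\ref{approx.err} for the bias term, but that proposition concerns the linearized population solution $f_\la^l$, which never enters this proof (it is needed for Theorem~\ref{convergence}); here a plain Young inequality $2\la R\norm{I_K\brac{A(f_{\zz,\la})-A(f_\rho)}}_{\LL}\leq\frac{1}{2}\norm{I_K\brac{A(f_{\zz,\la})-A(f_\rho)}}_{\LL}^2+2\la^2R^2$ already yields the $R\la$ (respectively $R\sqrt{\la}$) contribution.
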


In the above theorem we discussed the error bounds for the H{\"o}lder source condition (Assumption~\ref{source.cond}) with~$\phi(t)=\sqrt{t}$. In the following theorem, we discuss the error bound for the general source condition with the suitable assumptions on the function~$\phi$. 

\begin{theorem}\label{convergence}
Let~$\zz$ be i.i.d. samples drawn according to the probability measure~$\rho\in \PP_{\phi}$ where~$\phi(t)=\sqrt{t}\psi(t)$ is an index function satisfying the conditions that~$\psi(t)$ and~$\sqrt{t}/\psi(t)$ are nondecreasing functions. Suppose Assumptions~\ref{fp}--\ref{assmpt1}, \ref{A.assumption}--\ref{source.cond} and the conditions~\eqref{l.la.condition.k},~\eqref{smallness} hold true. Then, for all~$0<\eta<1$, for the regularized least-squares estimator~$f_{\zz,\la}$ (not necessarily unique) in~(\ref{fzl}) with the confidence~$1-\eta$ the following upper bound holds:
$$\|f_{\zz,\la}-f_\rho\|_{\HH_1} \leq C \left\{R\phi(\la)+\frac{\kappa M}{m\la}+\sqrt{\frac{\Sigma^2\mathcal{N}(\la)}{m\la}}\right\}\log\left(\frac{4}{\eta}\right),$$
where~$C$ depends on the parameters~$\gamma$,~$L$,~$\norm{w}_{\HH_1}$.
\end{theorem}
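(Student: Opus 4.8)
The plan is to follow the architecture of the proof of Theorem~\ref{converge} (the case $\phi(t)=\sqrt{t}$, i.e.\ $\psi\equiv 1$) and to isolate the single place where the general index function enters. The variance contributions $\frac{\kappa M}{m\la}+\sqrt{\Sigma^2\mathcal{N}(\la)/(m\la)}$ are dictated purely by the noise condition (Assumption~\ref{noise.cond}) and the effective dimension, and do not see $\phi$; hence the corresponding estimates, obtained through the concentration machinery and Proposition~\ref{main.bound}, carry over unchanged. Only the approximation (bias) term, handled by Proposition~\ref{approx.err}, must be re-derived for $\phi(t)=\sqrt{t}\psi(t)$. First I would write the first-order optimality condition for the functional in~\eqref{fzl} at $f_{\zz,\la}$, using Fr\'echet differentiability of $A$, and linearize $A$ about $f_\rho$ via $A(f)=A(f_\rho)+A'(f_\rho)(f-f_\rho)+r(f)$, where Assumption~\ref{A.assumption}(iii) bounds the remainder by $\frac{\gamma}{2}\|f-f_\rho\|_{\HH_1}^2$ after applying $I_K$. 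Collecting terms yields a decomposition of $f_{\zz,\la}-f_\rho$ into a bias part built from the source element, a data-fit part carrying the noise, and a nonlinear remainder, all expressed through the regularized inverse $(T_\xx+\la I)^{-1}$ of the empirical analogue $T_\xx$ of $T=B^\ast B$.

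The crux, and the only genuinely new computation, is the bias bound. Writing the source condition as $f_\rho-\fbar=\phi(T)g=T^{1/2}\psi(T)g=T^{1/2}w$ with $\|g\|_{\HH_1}\le R$, the Tikhonov approximation error takes, at the population level, the form $\la(T+\la I)^{-1}\phi(T)g$, whose norm I would bound by $R\sup_{0\le t\le\kappa^2L^2}\la\phi(t)/(t+\la)$. Here the two monotonicity hypotheses do exactly the required work: since $\phi$ is increasing, the supremum over $t\le\la$ is at most $\phi(\la)$; and since $\sqrt{t}/\psi(t)$ is nondecreasing---equivalently $\phi(t)/t$ is nonincreasing, placing $\phi$ within the qualification of Tikhonov regularization---the supremum over $t\ge\la$ obeys $\la\phi(t)/(t+\la)\le\la\phi(t)/t\le\phi(\la)$ as well. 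This produces the leading term $R\phi(\la)$. The passage from the empirical operator $T_\xx$ to the population operator $T$ inside this bound is precisely where condition~\eqref{l.la.condition.k} is used: it guarantees, with probability $1-\eta$, that the relevant perturbation factors $\norm{(T+\la I)^{1/2}(T_\xx+\la I)^{-1/2}}$ stay bounded by an absolute constant, so that the empirical and population versions of the bias and variance terms differ only by constants.

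The main obstacle is the nonlinearity, which makes the estimate self-referential: the optimality condition naturally involves $A'(f_{\zz,\la})$ rather than $A'(f_\rho)$, and the quadratic remainder $r(f_{\zz,\la})$ feeds the unknown error $\|f_{\zz,\la}-f_\rho\|_{\HH_1}$ back into the right-hand side. I would close the argument by a self-bounding (fixed-point) step: using the representation $f_\rho-\fbar=T^{1/2}w$ and the factorization $T=B^\ast B$, the remainder, once multiplied by the appropriate regularized operator, contributes a factor proportional to $\gamma\|w\|_{\HH_1}\,\|f_{\zz,\la}-f_\rho\|_{\HH_1}$. The smallness condition~\eqref{smallness}, $2\gamma\|w\|_{\HH_1}<1$, then makes the coefficient of this term strictly less than one, so the nonlinear contribution can be absorbed into the left-hand side, leaving the linear bias $R\phi(\la)$ and the variance terms to dominate. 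Tracking the resulting constants through $\gamma$, $L$, and $\|w\|_{\HH_1}$ gives the stated constant $C$ and completes the bound.
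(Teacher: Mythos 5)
Your bias computation is correct and coincides with the paper's Proposition~\ref{approx.err}: under the two monotonicity hypotheses, $\la\norm{(T+\la I)^{-1}\phi(T)g}_{\HH_1}\le R\sup_{t}\brac{\la\phi(t)/(t+\la)}\le R\phi(\la)$. The genuine gap is in the backbone of your argument. You start from the first-order optimality condition of~\eqref{fzl}, which reads $A'(f_{\zz,\la})^*S_\xx^*\paren{S_\xx A(f_{\zz,\la})-\yy}+\la(f_{\zz,\la}-\fbar)=0$ and unavoidably involves the derivative at the random point $f_{\zz,\la}$; to convert this into your claimed decomposition of $f_{\zz,\la}-f_\rho$ through $(T_\xx+\la I)^{-1}$, with $T_\xx$ built from $A'(f_\rho)$, you must replace $A'(f_{\zz,\la})^*$ by $A'(f_\rho)^*$, creating the mismatch term $\brac{A'(f_{\zz,\la})-A'(f_\rho)}^*S_\xx^*\paren{S_\xx A(f_{\zz,\la})-\yy}$. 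Assumption~\ref{A.assumption} gives no control whatsoever on the operator difference $A'(f_{\zz,\la})-A'(f_\rho)$: item (iii) bounds only the integrated Taylor remainder $\norm{I_K\brac{A(f)-A(f_\rho)-A'(f_\rho)(f-f_\rho)}}_{\LL}$, and operator-Lipschitz continuity of $A'$ is pointed out in the paper's remark as a strictly stronger hypothesis that is not assumed. The only available bound is $\norm{A'(f_{\zz,\la})-A'(f_\rho)}_{\HH_1\to\HH_2}\le 2L$, and since $\norm{S_\xx^*S_\xx\brac{A(f_{\zz,\la})-A(f_\rho)}}_{\HH_2}$ is itself of order $\kappa^2L\norm{f_{\zz,\la}-f_\rho}_{\HH_1}$, applying $(T_\xx+\la I)^{-1}$ leaves a term of order $(\kappa^2L^2/\la)\norm{f_{\zz,\la}-f_\rho}_{\HH_1}$ on the right-hand side, whose coefficient diverges as $\la\to 0$ and can never be absorbed. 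Your fixed-point step treats only $r(f_{\zz,\la})$, not this derivative mismatch, and the smallness condition~\eqref{smallness} is powerless against it.

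The paper avoids this entirely by never differentiating the functional. It introduces the linearized population Tikhonov solution $f_\la^l=(T+\la I)^{-1}(Tf_\rho+\la\fbar)$ of~\eqref{fl} and uses minimality only through the value comparison $\norm{S_\xx A(f_{\zz,\la})-\yy}_m^2+\la\norm{f_{\zz,\la}-\fbar}_{\HH_1}^2\le\norm{S_\xx A(f_\la^l)-\yy}_m^2+\la\norm{f_\la^l-\fbar}_{\HH_1}^2$, so that only values of $A$, linearized at the fixed point $f_\rho$, ever appear; the error is then split as $\norm{f_{\zz,\la}-f_\la^l}_{\HH_1}+\norm{f_\la^l-f_\rho}_{\HH_1}$ with Proposition~\ref{approx.err} handling the second piece. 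Note this is also not, contrary to your opening claim, the architecture of Theorem~\ref{converge}: there the competitor is $f_\rho$ itself, a trick that works only for $\phi(t)=\sqrt{t}$ because the source element $T^{1/2}g$ pairs directly with the prediction norm. Moreover, the intermediate element $f_\la^l$ is precisely what produces your absorption constant: in the paper the remainders are always multiplied by $\norm{B(f_\la^l-f_\rho)}_{\LL}=\la\norm{B(T+\la I)^{-1}T^{1/2}w}_{\LL}\le\la\norm{w}_{\HH_1}$ (inequality~\eqref{Rl}), whose explicit factor $\la$ cancels the $1/\la$ prefactor and leaves exactly the coefficient $2\gamma\norm{w}_{\HH_1}<1$ of~\eqref{smallness}. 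In your scheme the remainder instead enters as $(T_\xx+\la I)^{-1}B_\xx^*S_\xx r(f_{\zz,\la})$, for which the available bounds ($\norm{(T_\xx+\la I)^{-1}B_\xx^*}\le\tfrac{1}{2}\la^{-1/2}$ together with~\eqref{Rla}) give a coefficient of order $\kappa L\la^{-1/2}$ times $\norm{f_{\zz,\la}-f_\rho}_{\HH_1}$, again divergent; the bound~\eqref{R_la} with the constant $\gamma/2$ concerns the population norm $\norm{I_Kr(\cdot)}_{\LL}$ and does not apply to the empirical quantity $S_\xx r(f_{\zz,\la})$. So the proposal does not close as written; the missing idea is the comparison element $f_\la^l$ together with the value-comparison (rather than first-order) use of minimality.
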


Note that error bounds for~$\norm{f_{\zz,\la}-f_\rho}_{\HH_1}$ in both Theorem \ref{converge} and Theorem \ref{convergence} are the same upto a constant factor which depends on the parameters~$\gamma$,~$L$,~$\norm{w}_{\HH_1}$.

In Theorems~\ref{converge}--\ref{convergence}, the error estimates reveal the interesting fact that the error terms consist of increasing and decreasing functions of~$\la$ which led to propose a choice of regularization parameter by balancing the error terms. We derive the rates of convergence for the regularized least-squares estimator based on a data independent (a priori) parameter choice of~$\la$ for the classes of probability measures~$\mathcal{P}_{\phi}$ and~$\mathcal{P}_{\phi,b}$. The effective dimension plays a crucial role in the error analysis of regularized least-squares learning algorithm. In Theorem~\ref{err.upper.bound.p.para}, we derive the rate of convergence for the regularized least-squares solution~$f_{\zz,\la}$ under the general source condition~$f_\rho \in \Omega(\rho,\phi,R)$ for the parameter choice rule for~$\la$ based on the index function~$\phi$ and the sample size~$m$. For the class of probability measures~$\mathcal{P}_{\phi,b}$, the polynomial decay condition (Assumption~\ref{poly.decay}) on the spectrum of the operator~$T$ also enters into the picture and the parameter~$b$ enters in the parameter choice by the estimate~(\ref{N(l).bound}) of effective dimension. For this class, we derive the minimax optimal rate of convergence in terms of the index function~$\phi$, the sample size~$m$ and the parameter~$b$.  

\begin{theorem}\label{err.upper.bound.p.para}
Under the same assumptions of Theorem~\ref{convergence}, the convergence of the regularized least-squares estimator~$f_{\zz,\la}$ in~(\ref{fzl}) to the true solution~$f_\rho$ can be described as:
\begin{enumerate}[(i)]
\item For the class of probability measures~$\mathcal{P}_\phi$ with the parameter choice~$\la=\Theta^{-1}\paren{m^{-1/2}}$ where~$\Theta(t)=t\phi(t)$, we have
$$\mathbb{P}_{\zz\in Z^m}\left\{\|f_{\zz,\la}-f_\rho\|_{\HH_1}\leq C' \phi\left(\Theta^{-1}\paren{m^{-1/2}}\right)\log\left(\frac{4}{\eta}\right)\right\}\geq 1-\eta,$$
where~$C'$ depends on the parameters~$\gamma$,~$L$,~$\norm{w}_{\HH_1}$,~$R$,~$\kappa$,~$M$,~$\Sigma$ and
$$\lim\limits_{\tau\rightarrow\infty}\limsup\limits_{m\rightarrow\infty}\sup\limits_{\rho\in \PP_{\phi}} \mathbb{P}_{\zz\in Z^m}\left\{\|f_{\zz,\la}-f_\rho\|_{\HH_1}>\tau \phi\left(\Theta^{-1}\paren{m^{-1/2}}\right)\right\}=0.$$
\item  For the class of probability measures~$\mathcal{P}_{\phi,b}$ under Assumption~\ref{poly.decay} and the parameter choice~$\la=\Psi^{-1}\paren{m^{-1/2}}$ where~$\Psi(t)=t^{\frac{1}{2}+\frac{1}{2b}}\phi(t)$, we have
$$\mathbb{P}_{\zz\in Z^m}\left\{\|f_{\zz,\la}-f_\rho\|_{\HH_1}\leq C'' \phi\left(\Psi^{-1}\paren{m^{-1/2}}\right)\log\left(\frac{4}{\eta}\right)\right\}\geq 1-\eta,$$
where~$C''$ depends on the parameters~$\gamma$,~$L$,~$\norm{w}_{\HH_1}$,~$R$,~$\kappa$,~$M$,~$\Sigma$,~$b$,~$\beta$ and
$$\lim\limits_{\tau\rightarrow\infty}\limsup\limits_{m\rightarrow\infty}\sup\limits_{\rho\in \PP_{\phi,b}} \mathbb{P}_{\zz\in Z^m}\left\{\|f_{\zz,\la}-f_\rho\|_{\HH_1}>\tau \phi\left(\Psi^{-1}\paren{m^{-1/2}}\right)\right\}=0.$$
\end{enumerate}
\end{theorem}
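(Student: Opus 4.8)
The plan is to treat this as a corollary of the high-probability bound in Theorem~\ref{convergence}, whose right-hand side is a sum of one \emph{increasing} term $R\phi(\la)$ and two \emph{decreasing} terms, $\frac{\kappa M}{m\la}$ and $\sqrt{\frac{\Sigma^2\mathcal{N}(\la)}{m\la}}$, in the regularization parameter $\la$. Each of the two parameter choices is designed precisely to balance the leading increasing term against the leading decreasing (stochastic) term, once the effective dimension $\mathcal{N}(\la)$ has been estimated by the appropriate upper bound. First I would record that both $\Theta(t)=t\phi(t)$ and $\Psi(t)=t^{1/2+1/(2b)}\phi(t)$ are continuous and strictly increasing index functions vanishing at $t=0$ (being products of such functions, since $\phi$ is an index function by Assumption~\ref{source.cond} and $b>1$), so that $\Theta^{-1}$ and $\Psi^{-1}$ are well defined and the parameter choices $\la=\Theta^{-1}\paren{m^{-1/2}}$ and $\la=\Psi^{-1}\paren{m^{-1/2}}$ make sense and tend to $0$ as $m\to\infty$.

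For part~(i), over $\PP_\phi$ there is no eigenvalue-decay information, so I would insert the trivial estimate $\mathcal{N}(\la)\leq\kappa^2/\la$ from~\eqref{N(l).bd}. The stochastic term then becomes $\sqrt{\Sigma^2\kappa^2/(m\la^2)}=\Sigma\kappa/(\sqrt{m}\la)$, and balancing $R\phi(\la)$ against it leads exactly to the equation $\la\phi(\la)=m^{-1/2}$, i.e.\ $\Theta(\la)=m^{-1/2}$. With this choice $1/(\sqrt{m}\la)=\phi(\la)$, so the stochastic term is $\Sigma\kappa\,\phi(\la)$; and the remaining term satisfies $\kappa M/(m\la)=\kappa M\,m^{-1/2}\phi(\la)\leq\kappa M\,\phi(\la)$, hence is of the same or smaller order. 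Collecting the three contributions shows the bracket in Theorem~\ref{convergence} is bounded by a constant multiple of $\phi\paren{\Theta^{-1}\paren{m^{-1/2}}}$, which is the claimed rate.

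For part~(ii), over $\PP_{\phi,b}$ I would instead use the sharper bound $\mathcal{N}(\la)\leq C_{\beta,b}\la^{-1/b}$ from~\eqref{N(l).bound}, valid under Assumption~\ref{poly.decay}. The stochastic term then reads $\sqrt{\Sigma^2 C_{\beta,b}/(m\la^{1+1/b})}$, whose dependence on $\la$ is $\la^{-(1/2+1/(2b))}$; balancing against $R\phi(\la)$ yields $\la^{1/2+1/(2b)}\phi(\la)=m^{-1/2}$, that is $\Psi(\la)=m^{-1/2}$. As before the stochastic term equals a constant times $\phi(\la)$, while $\kappa M/(m\la)$ is checked to be of lower order, since the ratio $[\kappa M/(m\la)]/\phi(\la)$ equals a constant times $1/(\sqrt{m}\la^{1/2-1/(2b)})$, which tends to $0$ because $b>1$; hence the bracket is again controlled by $\phi\paren{\Psi^{-1}\paren{m^{-1/2}}}$.

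Finally, to pass from the fixed-confidence bound to the uniform-in-$\rho$ convergence-in-probability statements, I would invert the role of the confidence level: for a given threshold $\tau$, set $\eta=4\exp(-\tau/C)$ with $C$ the uniform constant produced above, so that $C\phi(\la)\log(4/\eta)=\tau\phi(\la)$ and Theorem~\ref{convergence} yields $\mathbb{P}_{\zz\in Z^m}\brac{\norm{f_{\zz,\la}-f_\rho}_{\HH_1}>\tau\phi(\la)}\leq 4\exp(-\tau/C)$. Since $C$ depends only on the class parameters and not on the particular $\rho$, this bound is uniform over $\PP_\phi$ (resp.\ $\PP_{\phi,b}$); letting $m\to\infty$ and then $\tau\to\infty$ gives the stated limit $0$. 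The one point requiring care is that Theorem~\ref{convergence} is only available once condition~\eqref{l.la.condition.k} holds; for fixed $\tau$ (hence fixed $\eta$) this is guaranteed for all large $m$, because under either parameter choice $\sqrt{m}\la\to\infty$ (indeed $\sqrt{m}\la=1/\phi(\la)\to\infty$ in case~(i), and $\sqrt{m}\la=\la^{-1/(2b)}/\psi(\la)\to\infty$ in case~(ii), using that $\psi$ is bounded near $0$), so the inner $\limsup_{m\to\infty}$ is taken over a range where the theorem applies. The main obstacle is therefore purely the bookkeeping: verifying that the $M$-term is genuinely of lower order under each balancing and that~\eqref{l.la.condition.k} is eventually satisfied for fixed confidence, the rate itself following immediately once the terms are balanced.
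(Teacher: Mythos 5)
Your proposal is correct and follows essentially the same route as the paper's own proof: insert the trivial bound~\eqref{N(l).bd} for $\PP_\phi$ and the polynomial bound~\eqref{N(l).bound} for $\PP_{\phi,b}$, use the respective parameter choices $\Theta(\la)=m^{-1/2}$ and $\Psi(\la)=m^{-1/2}$ to dominate each term in Theorem~\ref{convergence} by a constant multiple of $\phi(\la)$, and then set $\eta=4e^{-\tau/C}$ to convert the fixed-confidence bound into the uniform limit statements. The only cosmetic difference is in controlling the term $\kappa M/(m\la)$ in part (ii): the paper uses condition~\eqref{l.la.condition.k} together with a case split $\la\ge 1$ versus $\la\le 1$ to get a uniform finite-sample constant, whereas you phrase it as a lower-order/ratio argument, but the underlying computation is the same.
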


Notice that the rates given for the class~$P_\phi$ is worse than the one  for the (smaller) class~$P_{\phi,b}$, which is easily seen from the fact  that~$t^{1/2 + 1/(2b)}\geq t$ for~$b>1$, and hence~$\Psi(t) \geq \Theta(t)$ for~$t\in [0,1]$.

We obtain the following corollary as a consequence of Theorem~\ref{err.upper.bound.p.para}.
\begin{corollary}\label{err.upper.bound.cor}
Under the same assumptions of Theorem~\ref{convergence} with the H\"{o}lder's source condition~$f_\rho\in\Omega(\rho,\phi,R),~\phi(t)=t^r$, the convergence of the regularized least-squares estimator~$f_{\zz,\la}$ in~(\ref{fzl}) to the true solution~$f_\rho$ can be described as:
\begin{enumerate}[(i)]
\item For the class of probability measures~$\PP_{\phi}$ with the parameter choice~$\la=m^{-\frac{1}{2r+2}}$, for all~$0<\eta<1$, we have with the confidence~$1-\eta$,
$$\|f_{\zz,\la}-f_\rho\|_{\HH_1}\leq C'm^{-\frac{r}{2r+2}}\log\left(\frac{4}{\eta}\right) \quad\text{for }\frac{1}{2}\leq r\leq 1.$$
\item For the class of probability measures~$\PP_{\phi,b}$ with the parameter choice~$\la=m^{-\frac{b}{2br+b+1}}$, for all~$0<\eta<1$, we have with the confidence~$1-\eta$,
$$\|f_{\zz,\la}-f_\rho\|_{\HH_1}\leq C''m^{-\frac{br}{2br+b+1}}\log\left(\frac{4}{\eta}\right)\quad\text{for }\frac{1}{2}\leq r\leq 1.$$
\end{enumerate}
\end{corollary}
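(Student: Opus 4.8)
The plan is to specialize Theorem~\ref{err.upper.bound.p.para} to the H\"older index function $\phi(t)=t^r$ and carry out the elementary algebra of inverting the resulting power functions. The only genuine content beyond arithmetic is checking that $\phi(t)=t^r$ meets the hypotheses inherited from Theorem~\ref{convergence}, which is precisely what pins down the admissible range $\frac12\le r\le 1$ stated in both parts.

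First I would verify the structural conditions on $\phi$. Writing $\phi(t)=\sqrt{t}\,\psi(t)$ forces $\psi(t)=t^{r-1/2}$, and Theorem~\ref{convergence} demands that both $\psi(t)$ and $\sqrt{t}/\psi(t)=t^{1-r}$ be nondecreasing index functions. The first requires $r-\tfrac12\ge 0$ and the second requires $1-r\ge 0$, so the two monotonicity constraints together yield exactly $\frac12\le r\le 1$. For such $r$ the factorization $\phi(t)=\sqrt{t}\,\psi(t)$ is legitimate, the representer $w=\psi(T)g$ has finite $\HH_1$-norm (so that the smallness condition~\eqref{smallness} is meaningful), and all assumptions of Theorem~\ref{err.upper.bound.p.para} are in force.

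For part (i), I would compute $\Theta(t)=t\phi(t)=t^{r+1}$, whence $\Theta^{-1}(s)=s^{1/(r+1)}$ and the parameter choice $\la=\Theta^{-1}(m^{-1/2})=m^{-1/(2r+2)}$, which is the stated value. Substituting into the rate gives $\phi(\la)=\la^r=m^{-r/(2r+2)}$, so the high-probability bound of Theorem~\ref{err.upper.bound.p.para}(i) becomes the claimed estimate $\|f_{\zz,\la}-f_\rho\|_{\HH_1}\le C'm^{-r/(2r+2)}\log(4/\eta)$ with confidence $1-\eta$.

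For part (ii), I would compute $\Psi(t)=t^{1/2+1/(2b)}\phi(t)=t^{\,r+1/2+1/(2b)}$; collecting the exponent over the common denominator $2b$ gives $\Psi(t)=t^{(2br+b+1)/(2b)}$, hence $\Psi^{-1}(s)=s^{2b/(2br+b+1)}$ and $\la=\Psi^{-1}(m^{-1/2})=m^{-b/(2br+b+1)}$, matching the stated choice. Evaluating the rate yields $\phi(\la)=\la^r=m^{-br/(2br+b+1)}$, producing the claimed bound via Theorem~\ref{err.upper.bound.p.para}(ii). The nearest thing to an obstacle is the bookkeeping of the exponent $r+\tfrac12+\tfrac{1}{2b}=\frac{2br+b+1}{2b}$ and confirming that the polynomial eigenvalue decay (Assumption~\ref{poly.decay}) enters only through this $\Psi$, so that no estimate of the effective dimension beyond~\eqref{N(l).bound} is required.
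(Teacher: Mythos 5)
Your proposal is correct and is exactly the argument the paper intends: the corollary is stated as an immediate specialization of Theorem~\ref{err.upper.bound.p.para} to $\phi(t)=t^r$, and your algebra ($\Theta(t)=t^{r+1}$, $\Psi(t)=t^{(2br+b+1)/(2b)}$, hence the stated choices of $\la$ and rates $\phi(\la)=\la^r$) together with the monotonicity check on $\psi(t)=t^{r-1/2}$ and $\sqrt{t}/\psi(t)=t^{1-r}$ pinning down $\tfrac12\le r\le 1$ is precisely what is needed.
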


We obtain the following corollary as a consequence of Theorem~\ref{converge}.

\begin{corollary}\label{err.upper.bound.corr}
Under the same assumptions of Theorem~\ref{converge} with the H\"{o}lder's source condition~$f_\rho\in\Omega(\rho,\phi,R),~\phi(t)=t^{1/2}$, the convergence of the regularized least-squares estimator~$f_{\zz,\la}$ in~(\ref{fzl}) to the true solution~$f_\rho$ can be described as:
\begin{enumerate}[(i)]
\item For the class of probability measures~$\PP_{\phi}$ with the parameter choice~$\la=m^{-\frac{1}{3}}$, for all~$0<\eta<1$, we have with the confidence~$1-\eta$,
$$\|f_{\zz,\la}-f_\rho\|_{\HH_1}\leq C_1'm^{-\frac{1}{6}}\log\left(\frac{4}{\eta}\right)$$
and
$$\|I_K\brac{A(f_{\zz,\la})-A(f_\rho})\|_{\LL}\leq C_2'm^{-\frac{1}{3}}\log\left(\frac{4}{\eta}\right),$$
where~$C_1'$ and~$C_2'$ depends on the parameters~$\gamma$,~$L$,~$\norm{w}_{\HH_1}$,~$\kappa$,~$M$,~$\Sigma$.

\item For the class of probability measures~$\PP_{\phi,b}$ with the parameter choice~$\la=m^{-\frac{b}{2b+1}}$, for all~$0<\eta<1$, we have with the confidence~$1-\eta$,
$$\|f_{\zz,\la}-f_\rho\|_{\HH_1}\leq C_1''m^{-\frac{b}{4b+2}}\log\left(\frac{4}{\eta}\right)$$
and
$$\|I_K\brac{A(f_{\zz,\la})-A(f_\rho})\|_{\LL}\leq C_2''m^{-\frac{b}{2b+1}}\log\left(\frac{4}{\eta}\right),$$
where~$C_1''$ and~$C_2''$ depends on the parameters~$\gamma$,~$L$,~$\norm{w}_{\HH_1}$,~$\kappa$,~$M$,~$\Sigma$,~$b$,~$\beta$.
\end{enumerate}
\end{corollary}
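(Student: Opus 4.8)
The plan is to substitute the H\"older index $\phi(t)=\sqrt{t}$ together with the two prescribed parameter choices directly into the two estimates of Theorem~\ref{converge}, controlling the effective-dimension term $\mathcal{N}(\la)$ by the available bounds. Writing the bracketed quantity appearing in Theorem~\ref{converge} as
\[
\Xi(\la):=R\sqrt{\la}+\frac{\kappa M}{m\la}+\sqrt{\frac{\Sigma^2\mathcal{N}(\la)}{m\la}},
\]
the $\HH_1$-error is bounded by $C_1\,\Xi(\la)\log(4/\eta)$ and the $\LL$-error by $C_2\sqrt{\la}\,\Xi(\la)\log(4/\eta)$. Hence both parts reduce to estimating $\Xi(\la)$ for the relevant $\la$ and, for the $\LL$-bound, multiplying by the extra factor $\sqrt{\la}$. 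The whole argument is a bias--variance balancing: the term $R\sqrt{\la}$ increases in $\la$ while the sampling term decreases, and the prescribed $\la$ is exactly the value equating their orders.

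For part~(i), corresponding to the class $\PP_{\phi}$, I would insert the trivial estimate $\mathcal{N}(\la)\le\kappa^2/\la$ from~\eqref{N(l).bd}, so that the third summand of $\Xi(\la)$ is at most $\Sigma\kappa/(\sqrt{m}\,\la)$. Balancing $\sqrt{\la}\asymp(\sqrt{m}\,\la)^{-1}$ gives $\la^{3/2}\asymp m^{-1/2}$, i.e.\ $\la=m^{-1/3}$, which is the prescribed choice. Substituting $\la=m^{-1/3}$ yields $R\sqrt{\la}=R\,m^{-1/6}$, $\kappa M/(m\la)=\kappa M\,m^{-2/3}$ and $\Sigma\kappa/(\sqrt{m}\,\la)=\Sigma\kappa\,m^{-1/6}$, so the two $m^{-1/6}$ terms dominate and $\Xi(\la)\le C_1'm^{-1/6}$. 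The $\HH_1$-bound follows immediately, and for the $\LL$-bound the additional factor $\sqrt{\la}=m^{-1/6}$ turns $m^{-1/6}$ into $m^{-1/3}$.

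For part~(ii), corresponding to the smaller class $\PP_{\phi,b}$, I would instead use the sharper estimate $\mathcal{N}(\la)\le C_{\beta,b}\la^{-1/b}$ from~\eqref{N(l).bound}, under which the third summand becomes $\Sigma\sqrt{C_{\beta,b}}\,m^{-1/2}\la^{-(b+1)/(2b)}$. Balancing $\sqrt{\la}$ against $m^{-1/2}\la^{-(b+1)/(2b)}$ leads to $\la^{(2b+1)/(2b)}\asymp m^{-1/2}$, i.e.\ $\la=m^{-b/(2b+1)}$, again matching the prescribed choice. With this $\la$ the two balanced terms are of order $m^{-b/(4b+2)}$, while the middle term $\kappa M/(m\la)$ is of order $m^{-(b+1)/(2b+1)}$ and hence negligible; this gives the $\HH_1$-rate, and multiplying by $\sqrt{\la}=m^{-b/(4b+2)}$ produces the $\LL$-rate $m^{-b/(2b+1)}$.

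The only point requiring care is that Theorem~\ref{converge} holds once the side condition~\eqref{l.la.condition.k} is satisfied. For both choices one checks $\sqrt{m}\,\la\to\infty$: indeed $\sqrt{m}\,m^{-1/3}=m^{1/6}$ in part~(i) and $\sqrt{m}\,m^{-b/(2b+1)}=m^{1/(2(2b+1))}$ in part~(ii), so for each fixed $\eta$ the inequality $\sqrt{m}\,\la\ge 8\kappa^2\max(1,L(M+\Sigma)/(\kappa d))\log(4/\eta)$ holds for all sufficiently large $m$ and can be absorbed into the constants $C_1',C_2',C_1'',C_2''$. I do not expect any genuine obstacle here: the analytic content is entirely contained in Theorem~\ref{converge}, the smallness condition~\eqref{smallness} is independent of $\la$ and $m$ and so is inherited unchanged, and the only mild subtlety is to confirm that the prescribed exponents are precisely those produced by the balancing above.
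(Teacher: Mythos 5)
Your proposal is correct and is essentially the paper's own argument: the paper states this corollary as a direct consequence of Theorem~\ref{converge}, obtained exactly by inserting $\phi(t)=\sqrt{t}$ and the prescribed $\la$ into the two bounds, with the effective dimension controlled by~\eqref{N(l).bd} for $\PP_{\phi}$ and by~\eqref{N(l).bound} for $\PP_{\phi,b}$ (these parameter choices being the specializations $\la=\Theta^{-1}(m^{-1/2})$ and $\la=\Psi^{-1}(m^{-1/2})$ used in Theorem~\ref{err.upper.bound.p.para}). Your exponent computations and the verification of the side condition~\eqref{l.la.condition.k} match the paper's treatment, so there is nothing to add.
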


Now we compare the error bounds established for the direct learning setting in the sense of~$\LL$-norm reconstruction error~$\norm{I_K\brac{A(f_{\zz,\la})-A(f_\rho)}}_{\LL}$ and the inverse problem setting in the sense of the~$\HH_1$-norm reconstruction error~$\norm{f_{\zz,\la}-f_\rho}_{\HH_1}$. Since under the condition \eqref{l.la.condition.k} from \eqref{c2} we have that~$f_{\zz,\la}\in\mathcal{B}_d(f_\rho)\cap \mathcal{D}(A)\subset\HH_1$ with confidence~$1-\eta/2$, therefore with Assumption \ref{A.assumption} linearizing the operator~$A$ at~$f_\rho$ (i.e.,~$A(f_{\zz,\la})=A(f_\rho)+A'(f_\rho)(f_{\zz,\la}-f_\rho)+r(f_{\zz,\la})$) we conclude that
\begin{align}\label{reconst.err}
\norm{I_K\brac{A(f_{\zz,\la})-A(f_\rho)}}_{\LL} =& \norm{I_K\brac{A'(f_\rho)(f_{\zz,\la}-f_\rho)+r(f_{\zz,\la})}}_{\LL}\\\nonumber
\leq & \norm{B(f_{\zz,\la}-f_\rho)}_{\LL}+\norm{I_Kr(f_{\zz,\la})}_{\LL}\\   \nonumber
\leq & \norm{T^{1/2} (f_{\zz,\la}-f_\rho)}_{\HH_1} +\frac{\gamma}{2}\norm{f_{\zz,\la}-f_\rho}_{\HH_1}^2.
\end{align}

Thus bounding the prediction norm~$\norm{I_K \brac{ A(f_{\zz,\la}) - A(f_\rho)}}_{\LL}$ corresponds to a learning bound in which first norm  consists of some target function~$T^{1/2}f_\rho$ which has additional smoothness~$1/2$, on the other hand the second term is square of the reconstruction error in~${\HH_1}$-norm, therefore this might result in a higher rate. Indeed, this heuristics is validated from the Theorem \ref{converge} and Corollary \ref{err.upper.bound.corr}, where we observe that the prediction norm has the faster convergence rate than the reconstruction error in~${\HH_1}$-norm. 

The assumptions on the non-linear operator~$A$ (see~Assumption~\ref{A.assumption}, and the condition~\eqref{smallness}) allow us to estimate the reconstruction error bounds in~$\HH_1$-norm for H\"older source condition ($\phi(t)=t^r$) corresponds to the range~$\frac{1}{2}\leq r$. It is well-known that Tikhonov regularization has the saturation effect at~$r=1$ (since it has qualification~$1$), therefore we cannot improve the rates of convergence beyond~$r=1$. From \eqref{reconst.err} we observe that for the prediction error we have additional smoothness~$1/2$ in the bound on the right hand side, therefore we only estimate the prediction error for~$r=\frac{1}{2}$.

\subsection{Lower rates of convergence}
In this section, we discuss the lower rates of convergence for non-linear statistical inverse learning problem over a subclass of the probability measures~$\mathcal{P}_{\phi,b}$. The Kullback-Leibler information and Fano inequalities are the main ingredients in the analysis of the estimates for the minimum possible error. Kullback-Leibler divergence between two probability measures~$\rho_1$ and~$\rho_2$ is defined as
$$\mathcal{K}(\rho_1,\rho_2):=\int_Z\log(g(z))d\rho_1(z),$$
where~$g$ is the density of~$\rho_1$ with respect to~$\rho_2$, that is,~$\rho_1(E)=\int_Eg(z)d\rho_2(z)$ for all measurable sets~$E$.

To obtain the lower bound, we define a family of probability measures~$\rho_f$ parameterized by suitable vectors~$f\in\mathcal{D}(A)\subset\HH_1$. We assume that~$Y$ is finite-dimensional space with a basis~$\{v_j\}_{j=1}^d$. Then for each~$f \in\mathcal{D}(A)\subset\HH_1$, we associate the probability measure on the sample space~$Z$:
\begin{equation}\label{p.f}
\rho_f(x,y):=\frac{1}{2dJ}\sum\limits_{j=1}^d\left(a_j(x)\delta_{y+dJv_j}+b_j(x)\delta_{y-dJv_j}\right)\nu(x),
\end{equation}
where~$a_j(x)=J-\langle A(f),K_xv_j\rangle_{\HH_2}$,~$b_j(x)=J+\langle A(f),K_xv_j\rangle_{\HH_2}$,~$J=4\kappa\|A(f)\|_{\HH_2}$ and~$\delta_{y-\xi}$ denotes the Dirac measure on~$Y$ with unit mass at~$y=\xi$. 

Following the analysis of Caponnetto et al. \cite{Caponnetto} and DeVore et al. \cite{DeVore} we establish the lower rates of convergence for the non-linear statistical inverse problem that can be attained by any learning algorithm. The main steps are the following. In order to obtain the lower rates of convergence for learning algorithms, we generate~$N_\varepsilon$-vectors ($f_1,\ldots, f_{N_\varepsilon}$) depending on~$\varepsilon < \varepsilon_0$ for some~$\varepsilon_0 > 0,$ with~$N_\varepsilon \to\infty$ as~$\varepsilon \to 0$ such that any two of these vectors are separated by constant times~$\varepsilon$ with respect to the norm in Hilbert space~$\HH_1$ (Proposition~\ref{fi.fj.kull} (i)). Then we construct the probability measures~$\rho_i=\rho_{f_i}$ from~(\ref{p.f}), parameterized by~$f_i$'s~$(1\leq i\leq N_\varepsilon)$ with small Kullback–Leibler divergence to each other  (Proposition~\ref{fi.fj.kull} (ii)) and are therefore statistically close. Finally, we obtain the lower rates of convergence on applying \cite[Lemma~3.3]{DeVore} using Kullback-Leibler information.


\begin{assumption}\label{A.assumption1}
For the lower rates of convergence, we assume the following conditions on the non-linear operator~$A$:
\begin{enumerate}[(i)]
  \item~$A$ is Fr\'{e}chet differentiable.
  \item The Fr\'{e}chet derivative of~$A$ at the initial guess~$\fbar$ (of the solution of the functional~\eqref{fzl}) is bounded, i.e., there exists~$L < \infty$ such that:
 ~$$\|A'(\fbar)\|_{\HH_1\to\HH_2}\leq L.$$
 
  \item There exists~$\gamma\geq 0$ such that for all~$f,\tilde{f}\in \mathcal{D}(A)\subset\HH_1$ in a sufficiently large ball around~$\fbar$ we have,
 ~$$\norm{I_K\brac{A'(\tilde{f})-A'(f)}}_{HS}\leq \gamma\|\tilde{f}-f\|_{\HH_1}.$$
  \item The function~$\phi$ is a continuous increasing function  with~$\phi(0)=0$ and~$\theta(t)=\phi(t^2)$ is Lipschitz continuous with the constant~$L_\theta$. For the operators~$T=A'(f)^*I_K^*I_KA'(f)$ and~$\overline{T}=A'(\fbar)^*I_K^*I_KA'(\fbar)$:
\begin{equation*}\label{add.assumption}
\phi(T)=R_{f}\phi(\overline{T}) \text{  and  } \norm{R_{f}-I}_{\mathcal{L}(\HH_1)} \leq \zeta\norm{f-\fbar}_{\HH_1},
\end{equation*} 
where~$f$ belongs to the sufficiently large ball~$B_{d}(\fbar)$,~$R_{f}: \HH_1 \to \HH_1$ is a family of bounded linear operators and~$\zeta$ is a positive constant. 

\item The eigenvalues~$(t_n)_{n\in\NN}$ of the operator~$\overline{T}=A'(\fbar)^*I_K^*I_KA'(\fbar)$ follow the polynomial decay: For fixed positive constants~$\al,\beta$ and~$b>1$,
\begin{equation*}
\al n^{-b}\leq t_n\leq\beta n^{-b}~~\forall n\in\NN.
\end{equation*}
\end{enumerate}
\end{assumption}
 
In contrast to upper rates of convergence for Tikhonov regularization, we require the additional assumption (iv) on~$A$ for the lower rates. This condition is the generalization of the following condition used in \cite{Hanke1995} for the Landweber iteration:
\begin{equation*}\label{add.assumption1}
A'(f)=R_{f}A'(\fbar) \text{  and  } \norm{R_{f}-I}_{\mathcal{L}(\HH_1)} \leq \zeta\norm{f-\fbar}_{\HH_1},~f\in B_{d}(\fbar), 
\end{equation*}
which implies that the Fr{\'e}chet derivative of~$A$ is Lipschitz continuous in~$B_d(\fbar)$. Note that in the linear case~$R_f \equiv I$; therefore, Assumption~\ref{A.assumption1} (iv) may be interpreted as a further restriction on the ``non-linearity” of~$A$. 

The proof of the following theorem will be given in Appendix~\ref{sec:proof-lower-rates}.

\begin{theorem}\label{err.lower.bound.k.para}
Let~$\zz$ be i.i.d. samples drawn according to the probability measure~$\rho\in\mathcal{P}_{\phi,b}$ under the hypothesis~$dim(Y)=d<\infty$. Then under Assumptions~\ref{assmpt1},~\ref{A.assumption1} for~$\Psi(t)=t^{\frac{1}{2}+\frac{1}{2b}}\phi(t)$, the estimator~$f_{\zz}^l$ corresponding to any learning algorithm~$l$ ($\zz\to f_\zz^l\in\HH_1$) converges with the following lower rate:
$$\lim\limits_{\tau\to 0}\liminf\limits_{m\rightarrow\infty}\inf\limits_{l\in\mathcal{A}}\sup\limits_{\rho\in\mathcal{P}_{\phi,b}} \mathbb{P}_{\zz\in Z^m}\left\{\|f_\zz^l-f_\rho\|_{\HH_1}>\tau \phi\left(\Psi^{-1}\paren{m^{-1/2}}\right)\right\}=1,$$
where~$\mathcal{A}$ denotes the set of all learning algorithms~$l: \zz\to f^l_\zz.$
\end{theorem}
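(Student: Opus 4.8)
The plan is to derive the lower bound via the classical reduction of estimation to multiple hypothesis testing, in the Fano-type form of \cite[Lemma~3.3]{DeVore}. For each small scale I would produce a finite family $f_1,\dots,f_{N_\varepsilon}\in B_d(\fbar)\cap\DD(A)$ of admissible solutions, each lying in the source set $\Omega(\rho,\phi,R)$ so that the associated measures $\rho_i:=\rho_{f_i}$ from~\eqref{p.f} belong to $\PP_{\phi,b}$, chosen so that the $\rho_i$ are hard to tell apart from $m$ samples while the $f_i$ are mutually far in $\HH_1$. The two quantitative ingredients, namely an $\HH_1$-separation of the $f_i$ and a uniform bound on the Kullback--Leibler divergences $\mathcal{K}(\rho_i,\rho_j)$, are precisely the content of Proposition~\ref{fi.fj.kull}(i) and (ii); substituting them into \cite[Lemma~3.3]{DeVore} and calibrating the scale against $m$ then yields the asserted rate.

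For the construction I would diagonalize the reference operator $\overline{T}=A'(\fbar)^\ast I_K^\ast I_K A'(\fbar)$ with eigenpairs $(t_n,e_n)_{n\in\NN}$, fix a regularization scale $\la$ to be optimized later, and select a block $S\subset\NN$ of indices of cardinality $N\asymp\la^{-1/b}$ on which $t_n\asymp\la$. A Varshamov--Gilbert argument supplies sign vectors $\omega^{(1)},\dots,\omega^{(N_\varepsilon)}\in\{-1,1\}^{S}$ with pairwise Hamming distance at least $|S|/8$ and $N_\varepsilon\ge 2^{cN}$, and I set
\[
f_i-\fbar=\frac{\phi(\la)}{\sqrt{N}}\sum_{n\in S}\omega^{(i)}_n e_n.
\]
The two-sided bound $\al n^{-b}\le t_n\le\beta n^{-b}$ of Assumption~\ref{A.assumption1}(v) gives $\phi(t_n)\asymp\phi(\la)$ on $S$, so that $f_i-\fbar=\phi(\overline{T})\tilde g_i$ with $\norm{\tilde g_i}_{\HH_1}\asymp 1$; the factorization $\phi(T_i)=R_{f_i}\phi(\overline{T})$ of Assumption~\ref{A.assumption1}(iv), together with $\norm{R_{f_i}-I}<1$ on the ball, then converts this into a genuine source representation $f_i-\fbar=\phi(T_i)g_i$ with $\norm{g_i}_{\HH_1}\le R$, i.e. $f_i\in\Omega(\rho,\phi,R)$. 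The Hamming separation immediately gives $\norm{f_i-f_j}_{\HH_1}^2\ge c\,\phi(\la)^2$, which is the separation of Proposition~\ref{fi.fj.kull}(i).

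The Kullback--Leibler step is where the non-linearity must be handled. After fixing a common value of $J$ valid over $B_d(\fbar)$, the measures $\rho_i$ share both the marginal $\nu$ and a common finite support, so $\mathcal{K}(\rho_i,\rho_j)$ can be computed explicitly from the conditional weights $a_j(\cdot),b_j(\cdot)$ in~\eqref{p.f} and is controlled, up to a constant, by $\norm{I_K\brac{A(f_i)-A(f_j)}}_{\LL}^2$. Linearizing $A$ at $\fbar$ and estimating the remainder through the Lipschitz-derivative condition of Assumption~\ref{A.assumption1}(iii) replaces this, up to higher order, by $\norm{I_K\brac{A'(\fbar)(f_i-f_j)}}_{\LL}^2=\norm{\overline{T}^{1/2}(f_i-f_j)}_{\HH_1}^2\asymp\la\,\norm{f_i-f_j}_{\HH_1}^2\le C\la\,\phi(\la)^2$. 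Hence $m\,\mathcal{K}(\rho_i,\rho_j)\le C m\la\phi(\la)^2$, and the Fano requirement $m\,\mathcal{K}(\rho_i,\rho_j)\le c\log N_\varepsilon\asymp\la^{-1/b}$ holds as soon as $m\la^{1+1/b}\phi(\la)^2$ stays bounded, i.e. $\Psi(\la)\le c\,m^{-1/2}$ with $\Psi(t)=t^{1/2+1/(2b)}\phi(t)$. Taking $\la=\Psi^{-1}(m^{-1/2})$ balances the two constraints and makes the separation scale $\phi(\la)=\phi(\Psi^{-1}(m^{-1/2}))$ coincide with the claimed rate, after which \cite[Lemma~3.3]{DeVore} delivers the stated conclusion.

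I expect the genuine difficulty to lie in reconciling the non-linearity with the hypothesis construction, rather than in the final Fano step. Three points must hold simultaneously: every $f_i$ must remain inside $B_d(\fbar)$ so that Assumptions~\ref{A.assumption1}(ii)--(iv) are in force; the source condition must be transported from the fixed, $f$-independent operator $\overline{T}$ to the solution-dependent $T_i$ through $R_{f_i}$ without degrading the radius $R$; and the common support $J$ must be arranged so that $\mathcal{K}(\rho_i,\rho_j)$ is finite while the linearization remainder in Assumption~\ref{A.assumption1}(iii) remains negligible against the leading $\norm{\overline{T}^{1/2}(f_i-f_j)}_{\HH_1}^2$ term. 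Managing these couplings is exactly what separates the present argument from the linear constructions of \cite{Caponnetto} and \cite{Blanchard}.
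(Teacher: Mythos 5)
Your overall skeleton matches the paper's proof: reduction to multiple hypothesis testing via \cite[Lemma~3.3]{DeVore}, hypotheses built as sign-pattern perturbations of $\fbar$ along eigenvectors of $\overline{T}=A'(\fbar)^*I_K^*I_K A'(\fbar)$ at spectral level $\asymp\la$, Kullback--Leibler divergence controlled by $\norm{I_K\brac{A(f_i)-A(f_j)}}_{\LL}^2$ after linearizing $A$ at $\fbar$, and calibration $\la=\Psi^{-1}(m^{-1/2})$. However, there is a genuine gap at exactly the point you flag as the main difficulty, and your proposed mechanism for it does not work. You define $f_i$ explicitly by $f_i-\fbar=\phi(\overline{T})\tilde g_i$ and then claim that Assumption~\ref{A.assumption1}~(iv) together with $\norm{R_{f_i}-I}<1$ ``converts'' this into $f_i-\fbar=\phi(T_i)g_i$ with $\norm{g_i}_{\HH_1}\leq R$. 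Writing out what this requires: since $\phi(T_i)=R_{f_i}\phi(\overline{T})$, you need $g_i$ solving $R_{f_i}\phi(\overline{T})g_i=\phi(\overline{T})\tilde g_i$, i.e. $\phi(\overline{T})g_i=R_{f_i}^{-1}\phi(\overline{T})\tilde g_i$. But $\Ran(\phi(T_i))=R_{f_i}\bigl(\Ran(\phi(\overline{T}))\bigr)$ is in general a \emph{different} subspace from $\Ran(\phi(\overline{T}))$, so the vector $R_{f_i}^{-1}\phi(\overline{T})\tilde g_i$ need not lie in $\Ran(\phi(\overline{T}))$ at all; and even when it does, recovering $g_i$ involves the unbounded operator $\phi(\overline{T})^{-1}$, so no bound $\norm{g_i}_{\HH_1}\leq R$ follows from $\norm{R_{f_i}-I}$ being small. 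On top of this the argument is circular: $R_{f_i}$ and $T_i$ both depend on $f_i$, which you have fixed in advance, so invertibility of $R_{f_i}$ alone cannot produce the representation.

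The paper breaks this circularity in the opposite direction: it fixes the coefficient vector $g_i$ (with the $1/\phi(t_n)$ weights) first, and then \emph{constructs} $f_i$ implicitly as the fixed point of the map $F(f)=\phi(T_f)g_i+\fbar$, where $T_f=(I_KA'(f))^*I_KA'(f)$. The contraction property of $F$ is the real non-linear work: it uses the Hilbert--Schmidt Lipschitz bound $\norm{I_K\brac{A'(\tilde f)-A'(f)}}_{HS}\leq\gamma\norm{\tilde f-f}_{\HH_1}$ of Assumption~\ref{A.assumption1}~(iii), the operator-Lipschitz estimate $\norm{\theta(F)-\theta(\widetilde F)}_{HS}\leq L_\theta\norm{F-\widetilde F}_{HS}$ for $\theta(t)=\phi(t^2)$ (Proposition~\ref{OP_Lip}), and the square-root perturbation bound $\norm{T^{1/2}-\widetilde T^{1/2}}_{HS}\leq\sqrt 2\norm{B-\widetilde B}_{HS}$ (Proposition~\ref{TB}), under the smallness condition $\gamma\sqrt 2 L_\theta\norm{g_i}_{\HH_1}<1$. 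By construction the fixed point satisfies $f_i-\fbar=\phi(T_i)g_i$ \emph{exactly}, so membership in $\Omega(\rho_{f_i},\phi,R)$ is automatic; Assumption~(iv) is then used only afterwards, to pass back to the explicit expression $f_i-\fbar=R_{f_i}\phi(\overline{T})g_i$, which yields the separation $\norm{f_i-f_j}_{\HH_1}\geq\varepsilon\upsilon$ (with $\upsilon=1-\norm{I-R_{f_i}}-\norm{I-R_{f_j}}\to 1$), the bound on $\norm{\overline B(f_i-f_j)}$, and the comparison of the spectra of $T_i$ and $\overline T$ needed for $\rho_{f_i}\in\PP_{\phi,b}$. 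Without this fixed-point step (or an equivalent device), your family of hypotheses does not lie in the source class, and the supremum over $\PP_{\phi,b}$ in the statement cannot be lower-bounded by your testing problem.
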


We obtain the following corollary as a consequence of Theorem~\ref{err.lower.bound.k.para}.
\begin{corollary}
Under the same assumptions of Theorem~\ref{err.lower.bound.k.para}, for any learning algorithm with H\"{o}lder's source condition~$f_\rho\in\Omega(\rho,\phi,R),~\phi(t)=t^r$, the lower rates of convergence can be described as

$$\lim\limits_{\tau\to 0}\liminf\limits_{m\rightarrow\infty}\inf\limits_{l\in\mathcal{A}}\sup\limits_{\rho\in\mathcal{P}_{\phi,b}} \mathbb{P}_{\zz\in Z^m}\left\{\|f_\zz^l-f_\rho\|_{\HH_1}>\tau m^{-\frac{br}{2br+b+1}}\right\}=1.$$
\end{corollary}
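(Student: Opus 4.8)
The plan is to obtain the corollary by directly specializing the general lower bound of Theorem~\ref{err.lower.bound.k.para} to the H\"older index function $\phi(t)=t^r$ and then evaluating the abstract rate $\phi\paren{\Psi^{-1}\paren{m^{-1/2}}}$ in closed form. Since the theorem already delivers the lower bound for an arbitrary admissible $\phi$, the only tasks are (a) to check that $\phi(t)=t^r$ meets the $\phi$-specific hypotheses of Theorem~\ref{err.lower.bound.k.para}, and (b) to carry out the exponent arithmetic that converts $\phi\paren{\Psi^{-1}\paren{m^{-1/2}}}$ into the explicit power $m^{-br/(2br+b+1)}$.

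First I would verify admissibility. The function $\phi(t)=t^r$ is continuous, strictly increasing on $[0,\kappa^2L^2]$ and satisfies $\phi(0)=0$; moreover $\theta(t)=\phi(t^2)=t^{2r}$ is Lipschitz near the origin precisely when $2r\geq 1$, i.e.\ $r\geq \tfrac12$. This lower constraint $r\ge\tfrac12$ is the only restriction needed here (the upper endpoint $r\le 1$ appearing in the companion upper bound of Corollary~\ref{err.upper.bound.cor} stems from Tikhonov saturation and is irrelevant for a lower bound), so all parts of Assumption~\ref{A.assumption1} that concern $\phi$ are in force and Theorem~\ref{err.lower.bound.k.para} applies verbatim to the class $\PP_{\phi,b}$ with this $\phi$.

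Next I would compute the rate. With $\phi(t)=t^r$ the calibration function becomes $\Psi(t)=t^{\frac12+\frac1{2b}}\,t^r=t^{(2br+b+1)/(2b)}$, using the common denominator $2b$ in $\tfrac12+\tfrac1{2b}+r=\frac{2br+b+1}{2b}$. Inverting this monomial gives $\Psi^{-1}(s)=s^{2b/(2br+b+1)}$, whence $\Psi^{-1}\paren{m^{-1/2}}=m^{-b/(2br+b+1)}$. Applying $\phi$ yields $\phi\paren{\Psi^{-1}\paren{m^{-1/2}}}=\paren{m^{-b/(2br+b+1)}}^r=m^{-br/(2br+b+1)}$, which is exactly the target rate. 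Substituting this identity into the conclusion of Theorem~\ref{err.lower.bound.k.para} produces the stated limit.

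There is no genuine obstacle: the corollary is a pure specialization, and the only point requiring care is the bookkeeping of the exponent $\tfrac12+\tfrac1{2b}+r=\frac{2br+b+1}{2b}$, together with the fact that inverting a power map sends $m^{-1/2}$ to $m^{-b/(2br+b+1)}$ and not to any neighboring exponent. The one conceptual check worth flagging is the Lipschitz condition on $\theta(t)=t^{2r}$, which is what pins the admissible H\"older range to $r\ge\tfrac12$; below that threshold the $\phi$-hypotheses of Theorem~\ref{err.lower.bound.k.para} fail and the clean monomial rate is no longer guaranteed by this route.
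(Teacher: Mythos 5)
Your proposal is correct and takes essentially the same route as the paper, which states this corollary as an immediate consequence of Theorem~\ref{err.lower.bound.k.para} with no separate argument: the whole content is the exponent computation $\Psi(t)=t^{\frac{1}{2}+\frac{1}{2b}}\,t^r=t^{(2br+b+1)/(2b)}$, hence $\phi\left(\Psi^{-1}\left(m^{-1/2}\right)\right)=m^{-br/(2br+b+1)}$, which you carry out correctly. Your additional check that $\theta(t)=\phi(t^2)=t^{2r}$ is Lipschitz near the origin only for $r\geq\frac{1}{2}$ (so that Assumption~\ref{A.assumption1} (iv) is satisfiable) is a correct reading of the implicit range restriction that the paper leaves unstated.
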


The choice of parameter~$\la(m)$ is said to be optimal, if for this choice of parameter, the upper rate of convergence coincides with the minimax lower rate. For the class of probability measures~$\mathcal{P}_{\phi,b}$ with the parameter choice~$\la=\Psi^{-1}(m^{-1/2})$, Theorem~\ref{err.upper.bound.p.para} shares the upper rate of convergence with the lower rate of convergence in Theorem~\ref{err.lower.bound.k.para}. Therefore the choice of the parameter is optimal.


\section{Discussion}\label{Discussion}
Our analysis guarantees the consistency of Tikhonov regularization algorithm and provides a finite sample bound for non-linear statistical inverse learning problem in vector-valued reproducing kernel ansatz, therefore the results can be applied to the multitask learning problem. We also discussed the asymptotic worst-case analysis for any learning algorithm in this setting, showing optimality in the minimax sense on a suitable class of priors. The rates of convergence presented in Section~\ref{sec:convergence} are asymptotic in nature, i.e., all parameters are fixed as~$m\to\infty$. This provides a mathematical foundation for nonlinear inverse problems in the statistical learning framework. The considered framework generalizes previously proposed settings for different learning schemes: direct, linear inverse learning problem.

\subsubsection*{Impact of effective dimension}\label{sec:impact}

The upper rates were represented in terms of the index function~$\phi$ from Assumption~\ref{source.cond}, and the effective dimension~$\mathcal{N}(\la)$ of the governing operator~$L_K$. This is seen from the basic probabilistic bound, given in Proposition~\ref{main.bound}, and this holds regardless of the fact that~$\la\to \mathcal{N}(\la)$ decays at a polynomial rate. However, the construction for the lower bounds makes use of  this constraint. Also, the Corollaries~\ref{err.upper.bound.cor} and~\ref{err.upper.bound.corr} can be given a handy representation of the upper bounds  under power type decay.

\subsubsection*{Saturation effect}\label{sec:saturation}

In Theorem~\ref{converge} we highlighted the upper rates, both for the errors~$\norm{f_{\zz,\la} - f_\rho}_{\HH_1}$, and~$\|I_K\brac{A(f_{\zz,\la})-A(f_\rho})\|_{\LL}~$ in the limiting case when smoothness is given through the index function~$\phi(t) = \sqrt{t}$;  and these differ by a factor~$\sqrt\la.$ We emphasize that for higher smoothness~$\phi(t) = \sqrt \psi(t)$ with an additional index function~$\psi$ this cannot be expected to remain valid. This is known from the linear case and is due to the saturation effect of Tikhonov regularization.

\subsubsection*{Relation to classical regularization theory}\label{sec:relation}

Within the present study, the smoothness assumption~\ref{source.cond} is based on the composed operator~$B= I_{K} \circ [A^{\prime}(f_\rho)]$ through~$T= B^{\ast}B$. This is in contrast to classical regularization theory, when the corresponding operator is~$\left[ A^{\prime}(f_\rho)\right]^{\ast} A^{\prime}(f_\rho)$. By assuming an appropriate link condition between the operators~$\left[ A^{\prime}(f_\rho)\right]^{\ast} A^{\prime}(f_\rho)$ and~$T$ one can transfer the obtained rates results from the present context to the standard ones, and we refer to the corresponding calculus established in~\cite{pm2018a}.

\subsubsection*{Parameter choice}\label{sec:choice}

The a-priori parameter choice considered in our analysis depends on the smoothness parameters~$b$,~$\phi$. In practice, a posteriori parameter choice rule (data-dependent) for the regularization parameter~$\la$ such as the discrepancy principle, balancing principle, quasi-optimality principle with theoretical justification is required, so that we can turn our results to data-dependent minimax adaptivity even in the absence of a priori knowledge of the regularity parameters.

\appendix

\section{Notation and probabilistic estimates}

Here we introduce some relevant operators.
\begin{definition}[Sampling operator]
  For a discrete ordered set~$\xx=(x_i)_{i=1}^m$, the sampling
    operator~$S_\xx:\HH_2 \to Y^m$ is defined as
$$S_\xx(f):=(f(x_1),\ldots,f(x_m)).$$
\end{definition}

We equip the product Hilbert space~$Y^m$ with the scalar product
$\inner{(y_i)_{i=1}^m,(y'_i)_{i=1}^m} = \frac{1}{m}\sum_{i=1}^m
\inner{y_i,y'_i},$ and denote the associated Hilbert
norm~$\norm{\yy}^2_m=\frac{1}{m}\sum_{i=1}^m\norm{y_i}_Y^2$
for~$\yy=(y_1,\ldots,y_m)$. Then the adjoint~$S_\xx^*:Y^m\to\HH_2$ is
given by
$$S_\xx^*\mathbf{c}=\frac{1}{m}\sum_{i=1}^m K_{x_i} c_i,~~~~\forall \mathbf{c}=(c_1,\ldots,c_m)\in Y^m.$$

Under Assumption~\ref{assmpt1}, the sampling operator is bounded by~$\kappa$, since
 \[ \norm{S_\xx f}^2_m 
   =\frac{1}{m}\sum_{i=1}^m\norm{f(x_i)}_Y^2
   =\frac{1}{m}\sum_{i=1}^m\norm{K_{x_i}^*f}_Y^2 \leq
   \kappa^2\norm{f}^2_{\HH_1}.\]

The sampling versions are the operators~$B_\xx:=S_\xx\circ (A'(f_\rho))$ and~$T_\xx:=B_\xx^*B_\xx$. The operator~$T_\xx$ is positive and self-adjoint. Under the Assumptions~\ref{assmpt1},~\ref{A.assumption} (ii), the operator~$B_\xx$ is bounded and satisfies~$\norm{B_\xx}_{\HH_1 \to Y^m}\leq\kappa L$. We also recall that~$L_K=I_K^*I_K$ for the canonical injection map~$I_K:\HH_2\to\LL$ and~$T=B^*B$ for~$B=I_K\circ [A'(f_\rho)]$. These operators will be used in our analysis.

The following inequality is based on the results of Pinelis and Sakhanenko \cite{Pinelis}.

\begin{proposition}\label{pinels_lemma}
Let~$\HH$ be a real separable Hilbert space and~$\xi$ be a random variable on~$(\Omega,\rho)$ with values in~$\HH$. If there exist two constant~$Q$ and~$S$ satisfying
$$\mathbb{E}_\omega\left\{\|\xi(\omega)-\mathbb{E}_\omega(\xi)\|_{\HH}^n\right\} \leq \frac{1}{2}n!S^2Q^{n-2}~~~\forall n \geq 2,$$
then for any~$0<\eta<1$ and for all~$m \in \NN$,
$$\mathbb{P}\left\{(\omega_1,\ldots,\omega_m) \in \Omega^m : \left\|\frac{1}{m}\sum\limits_{i=1}^{m}\xi(\omega_i)-\mathbb{E}_\omega(\xi)\right\|_{\HH} \leq 2\left(\frac{Q}{m}+\frac{S}{\sqrt{m}}\right)\log\left(\frac{2}{\eta}\right)\right\} \geq 1-\eta.$$
\end{proposition}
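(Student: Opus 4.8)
The plan is to obtain the stated confidence bound from a Bernstein-type \emph{exponential} tail inequality for Hilbert-space-valued sums, and then to invert it. First I would pass to the centered variables $\zeta_i := \xi(\omega_i) - \mathbb{E}_\omega(\xi)$, which are i.i.d., centered, and $\HH$-valued, and which by hypothesis satisfy $\mathbb{E}\|\zeta_i\|_\HH^n \leq \tfrac12 n! S^2 Q^{n-2}$ for all $n\geq 2$. Writing $S_m := \sum_{i=1}^m \zeta_i$, the event in the proposition is $\{\|S_m\|_\HH/m \leq t\}$ with $t := 2\left(\tfrac{Q}{m}+\tfrac{S}{\sqrt m}\right)\log\tfrac{2}{\eta}$, so it suffices to show the complementary tail $\mathbb{P}\{\|S_m\|_\HH/m \geq t\}$ is at most $\eta$.

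The core analytic step, and the place where the result of Pinelis and Sakhanenko enters, is to control the exponential moments of $\|S_m\|_\HH$ and thereby establish the vector Bernstein bound $\mathbb{P}\{\|S_m\|_\HH/m \geq t\} \leq 2\exp\!\left(-\tfrac{mt^2}{2(S^2+Qt)}\right)$. I regard this as the main obstacle. Unlike the scalar Bernstein inequality, one cannot simply bound the moment-generating function of a sum of scalars, because $\|S_m\|_\HH$ is the norm of a \emph{vector} sum; the argument must exploit the Hilbert (i.e.\ $2$-smooth) structure. Concretely, for independent centered summands one has the orthogonality identity $\mathbb{E}\|S_m\|_\HH^2 = \sum_{i=1}^m \mathbb{E}\|\zeta_i\|_\HH^2$, and this second-moment control, fed into a martingale-type comparison of $\exp(\lambda\|S_\cdot\|_\HH)$ across the partial sums, is precisely the Pinelis--Sakhanenko machinery that converts the factorial moment growth $\tfrac12 n! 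S^2 Q^{n-2}$ into the clean Bernstein exponent above. I would invoke their exponential inequality here rather than reprove it.

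Granting the exponential tail, the remaining step is an elementary inversion, which I do not expect to be difficult. Set $L := \log\tfrac{2}{\eta}$, so that $t = 2L\left(\tfrac{Q}{m}+\tfrac{S}{\sqrt m}\right)$; it is enough to check $\tfrac{mt^2}{2(S^2+Qt)} \geq L$, i.e.\ $mt^2 \geq 2L(S^2+Qt)$. Substituting the value of $t$ and simplifying, a short computation reduces the required inequality to $4L^2\,\tfrac{QS}{\sqrt m} + 2LS^2(2L-1) \geq 0$, which holds because every term is nonnegative once $L = \log\tfrac{2}{\eta} > \log 2 > \tfrac12$, valid for all $0<\eta<1$. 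Consequently $\mathbb{P}\{\|S_m\|_\HH/m \geq t\} \leq 2e^{-L} = \eta$, and passing to the complementary event gives the asserted bound with confidence $1-\eta$, for every $m\in\NN$.
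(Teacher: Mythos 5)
Your proposal is correct and follows essentially the same route as the paper, which states this proposition without proof as a direct consequence of the Pinelis--Sakhanenko exponential inequality: you invoke exactly that Bernstein-type tail bound $\mathbb{P}\{\|S_m\|_{\HH}/m \geq t\} \leq 2\exp\left(-\tfrac{mt^2}{2(S^2+Qt)}\right)$ and then carry out the standard inversion. Your algebraic verification is sound — the reduction to $4L^2 QS/\sqrt{m} + 2LS^2(2L-1) \geq 0$ is correct, and $L = \log(2/\eta) > \log 2 > \tfrac{1}{2}$ indeed makes every term nonnegative — so your write-up in fact supplies the details the paper leaves to the cited reference.
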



In the following proposition, we measure the effect of random sampling using Assumption~\ref{noise.cond}. The quantities describe the probabilistic estimates of the perturbation measure due to random sampling. These bounds are standard in learning theory.

\begin{proposition}\label{main.bound}
Let~$\zz$ be i.i.d. random samples with Assumptions~\ref{fp}--\ref{assmpt1}, then for~$m \in \NN$ and~$0<\eta<1$, each of the following estimate holds with the confidence~$1-\eta$,
\begin{equation*}
\norm{\frac{1}{m}\sum\limits_{i=1}^m(L_K+\la I)^{-1/2}K_{x_i}(y_i-A(f_\rho)(x_i))}_{\HH_2} \leq 2\left(\frac{\kappa M}{m\sqrt{\la}}+\sqrt{\frac{\Sigma^2\mathcal{N}(\la)}{m}}\right)\log\left(\frac{2}{\eta}\right),
\end{equation*}

\begin{equation*}
\norm{\frac{1}{m}\sum\limits_{i=1}^m K_{x_i}(y_i-A(f_\rho)(x_i))}_{\HH_2} \leq 2\left(\frac{\kappa M}{m}+\frac{\kappa\Sigma}{\sqrt{m}}\right)\log\left(\frac{2}{\eta}\right)
\end{equation*}
%
and
\begin{equation*}
\|S_\xx^*S_\xx-L_K\|_{\mathcal{L}_2(\HH_2)}\leq 2\left(\frac{\kappa^2}{m}+\frac{\kappa^2}{\sqrt{m}}\right)\log\left(\frac{2}{\eta}\right).
\end{equation*}
\end{proposition}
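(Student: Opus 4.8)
The plan is to derive each of the three estimates by applying the Pinelis--Sakhanenko inequality (Proposition~\ref{pinels_lemma}) to a suitably chosen Hilbert-space-valued i.i.d.\ random variable $\xi$; in each case the three tasks are to identify the mean, to verify the Bernstein-type moment condition $\mathbb{E}\norm{\xi-\mathbb{E}\xi}^n \leq \frac{1}{2}n!S^2Q^{n-2}$, and to read off the constants $Q$ and $S$. A preliminary step common to the first two estimates is to reformulate the exponential noise bound (Assumption~\ref{noise.cond}) as the factorial moment bound $\mathbb{E}_y[\norm{y-A(f_\rho)(x)}_Y^n \mid x] \leq \frac{1}{2}n!\Sigma^2 M^{n-2}$ for all $n\geq 2$, which follows by expanding $e^t-t-1=\sum_{k\geq 2}t^k/k!$ and comparing terms.

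For the first estimate I would take $\xi = (L_K+\la I)^{-1/2}K_x(y-A(f_\rho)(x))$, an $\HH_2$-valued variable. By Assumption~\ref{assmpt2} the inner factor has conditional mean zero, so $\mathbb{E}[\xi]=0$ and we concentrate a centered average around the origin. Using $\norm{\xi}_{\HH_2}\leq \norm{(L_K+\la I)^{-1/2}K_x}_{\mathcal{L}(Y,\HH_2)}\norm{y-A(f_\rho)(x)}_Y$, conditioning on $x$, and the factorial moment bound, I would factor out $(\kappa\la^{-1/2})^{n-2}$ from the $n$-th power via the deterministic bound $\norm{(L_K+\la I)^{-1/2}K_x}\leq \kappa\la^{-1/2}$, retaining the second moment. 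The crucial computation is
\[
\mathbb{E}_x\sbrac{\norm{(L_K+\la I)^{-1/2}K_x}_{HS}^2}=\mathbb{E}_x\sbrac{\tr\paren{(L_K+\la I)^{-1}K_xK_x^*}}=\tr\paren{(L_K+\la I)^{-1}L_K}=\mathcal{N}(\la),
\]
where the reproducing property yields $\mathbb{E}_x[K_xK_x^*]=L_K$. This gives the Bernstein condition with $S^2=\Sigma^2\mathcal{N}(\la)$ and $Q=\kappa M/\sqrt{\la}$, and Proposition~\ref{pinels_lemma} produces the claimed bound exactly.

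The second estimate follows identically with $\xi=K_x(y-A(f_\rho)(x))$; now $\norm{\xi}_{\HH_2}\leq \kappa\norm{y-A(f_\rho)(x)}_Y$ directly yields the moment condition with $S=\kappa\Sigma$ and $Q=\kappa M$. For the third estimate I would regard $\xi=K_xK_x^*$ as a variable valued in the Hilbert space $\mathcal{L}_2(\HH_2)$ of Hilbert--Schmidt operators, whose mean is again $L_K$, so the average is precisely $S_\xx^*S_\xx$. There is no noise term here: one instead uses the almost-sure bound $\norm{K_xK_x^*}_{HS}\leq \norm{K_x}_{HS}\norm{K_x}_{\mathcal{L}(Y,\HH_2)}\leq \kappa^2$, whence $\norm{\xi-\mathbb{E}\xi}_{HS}\leq 2\kappa^2$ and $\mathbb{E}\norm{\xi-\mathbb{E}\xi}_{HS}^2\leq \kappa^4$. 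The moment condition then follows from $\mathbb{E}\norm{\xi-\mathbb{E}\xi}^n\leq (2\kappa^2)^{n-2}\kappa^4=2^{n-2}\kappa^{2n}$ together with the elementary inequality $2^{n-1}\leq n!$ for $n\geq 2$, giving $S=Q=\kappa^2$.

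I expect the main obstacle to be the first estimate, specifically the trace identity linking $\mathbb{E}_x[\norm{(L_K+\la I)^{-1/2}K_x}_{HS}^2]$ to the effective dimension $\mathcal{N}(\la)$; the remaining work is bookkeeping with the noise moments. The subtlety worth flagging is that the factoring step must use the almost-sure operator-norm bound to control the high powers while keeping the second moment sharp at $\mathcal{N}(\la)$, since it is precisely this split that generates the two distinct terms $Q/m$ and $S/\sqrt{m}$ in the final estimate.
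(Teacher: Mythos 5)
Your proposal is correct and follows essentially the same route as the paper: for the first two bounds the paper applies Proposition~\ref{pinels_lemma} to exactly the random variables $\xi_1=(L_K+\la I)^{-1/2}K_x(y-A(f_\rho)(x))$ and $\xi_2=K_x(y-A(f_\rho)(x))$, with the same factorial-moment reformulation of Assumption~\ref{noise.cond}, the same split into an almost-sure factor $\paren{\kappa M/\sqrt{\la}}^{n-2}$ and the second-moment factor $\Sigma^2\mathcal{N}(\la)$ obtained from $\mathbb{E}_x\sbrac{\tr\paren{(L_K+\la I)^{-1}K_xK_x^*}}=\mathcal{N}(\la)$, and the same resulting $(Q,S)$ (your $Q=\kappa M/\sqrt{\la}$ is the value consistent with the paper's displayed moment bound and final estimate). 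The only divergence is the third bound, which the paper simply quotes from Theorem~2 of De Vito et al.\ \cite{DeVito0}, whereas you prove it directly by applying Proposition~\ref{pinels_lemma} to the $\mathcal{L}_2(\HH_2)$-valued variable $\xi=K_xK_x^*$; your verification (almost-sure bound $2\kappa^2$, second moment $\kappa^4$, and $2^{n-1}\leq n!$ yielding $Q=S=\kappa^2$) is sound and makes the proposition self-contained at no extra cost.
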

\begin{proof}
To estimate the first expression, we consider the random variable~$\xi_1(z)= (L_K+\la I)^{-1/2}K_x(y-A(f_\rho)(x))$ from~$(Z,\rho)$ to reproducing kernel Hilbert space~$\HH_2$. Under the Assumption \ref{fp} we obtain
$$\mathbb{E}_z(\xi_1)=\int_Z(L_K+\la I)^{-1/2}K_x(y-A(f_\rho)(x))d\rho(x,y)=0,$$
$$\frac{1}{m}\sum\limits_{i=1}^m\xi_1(z_i)=\frac{1}{m}\sum\limits_{i=1}^m(L_K+\la I)^{-1/2}K_{x_i}(y_i-A(f_\rho)(x_i))$$
and
\begin{eqnarray*}
\mathbb{E}_z(\|\xi_1-\mathbb{E}_z(\xi_1)\|_{\HH_2}^n)&=&\mathbb{E}_z\left(\|(L_K+\la I)^{-1/2}K_x(y-A(f_\rho)(x))\|_{\HH_2}^n\right) \\
&\leq& \mathbb{E}_z\left(\|K_x^*(L_K+\la I)^{-1}K_x\|_{\mathcal{L}(Y)}^{n/2}\|y-A(f_\rho)(x)\|_{Y}^n\right).
\end{eqnarray*}
Under Assumptions~\ref{noise.cond}--\ref{assmpt1} we get~$\int_Y\norm{y-A(f_\rho)(x)}_Y^nd\rho(y|x)\leq \frac{n!}{2}\Sigma^2M^{n-2},\quad\forall n\geq 2$ which implies,
$$\mathbb{E}_z\left(\|\xi_1-\mathbb{E}_z(\xi_1)\|_{\HH_2}^n\right) \leq \frac{n!}{2}(\Sigma\sqrt{\mathcal{N}(\la)})^2\left(\frac{\kappa M}{\sqrt{\la}}\right)^{n-2},\quad\forall n\geq 2.$$
On applying Proposition~\ref{pinels_lemma} with~$Q=\kappa M$ and~$S=\Sigma\sqrt{\mathcal{N}(\la)}$ follows that,
$$\mathbb{P}_{\zz \in Z^m}\left\{\norm{\frac{1}{m}\sum\limits_{i=1}^m(L_K+\la I)^{-1/2}K_{x_i}(y_i-A(f_\rho)(x_i))}_{\HH_2} \leq 2\left(\frac{\kappa M}{m\sqrt{\la}}+\sqrt{\frac{\Sigma^2\mathcal{N}(\la)}{m}}\right)\log\left(\frac{2}{\eta}\right)\right\} \geq 1-\eta.$$ 

The second expression can be estimated easily by considering the random variable~$\xi_2(z)= K_x(y-A(f_\rho)(x))$ from~$(Z,\rho)$ to reproducing kernel Hilbert space~$\HH_2$, while the proof of the third expression can be obtained from Theorem~2 in De Vito et al. \cite{DeVito0}.
\end{proof}

\begin{proposition}\label{I1}
For~$m \in \NN$ and~$0<\eta<1$, under with Assumptions~\ref{assmpt1}, the following estimates hold with the confidence~$1-\eta/2$,
\begin{equation*}
\|S_\xx^*S_\xx-L_K\|_{\mathcal{L}_2(\HH_2)}\leq \frac{\la}{2},
\end{equation*}

\begin{equation*}
\|(S_\xx^*S_\xx+\la I)^{-1}(L_K+\la I)\|_{\mathcal{L}_2(\HH_2)}\leq 2
\end{equation*}
and
\begin{equation*}
\|(S_\xx^*S_\xx+\la I)^{-1/2}(L_K+\la I)^{1/2}\|_{\mathcal{L}_2(\HH_2)}\leq \sqrt{2}
\end{equation*}
provided that
\begin{equation}\label{l.la.condition}
8\kappa^2\log(4/\eta) \leq \sqrt{m}\la.
\end{equation}
\end{proposition}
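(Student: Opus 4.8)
The plan is to derive all three estimates from the third concentration bound in Proposition~\ref{main.bound}, applied at confidence level~$1-\eta/2$, together with elementary resolvent/perturbation arguments. The single probabilistic input is the deviation~$S_\xx^*S_\xx-L_K$; everything else is deterministic, conditional on this deviation being small.

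First I would invoke the third inequality of Proposition~\ref{main.bound} with~$\eta$ replaced by~$\eta/2$, which then holds with confidence~$1-\eta/2$ and reads
$$\norm{S_\xx^*S_\xx-L_K}_{\mathcal{L}_2(\HH_2)}\leq 2\paren{\frac{\kappa^2}{m}+\frac{\kappa^2}{\sqrt{m}}}\log\paren{\frac{4}{\eta}}.$$
Since~$m\geq 1$ gives~$1/m\leq 1/\sqrt{m}$, the right-hand side is at most~$\frac{4\kappa^2}{\sqrt{m}}\log(4/\eta)$. The hypothesis~\eqref{l.la.condition}, namely~$8\kappa^2\log(4/\eta)\leq\sqrt{m}\la$, is precisely~$\frac{4\kappa^2}{\sqrt{m}}\log(4/\eta)\leq \la/2$, so the first estimate~$\norm{S_\xx^*S_\xx-L_K}\leq\la/2$ follows immediately.

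For the remaining two estimates I would argue by perturbation around the identity, using that~$S_\xx^*S_\xx\geq 0$, so~$S_\xx^*S_\xx+\la I\geq\la I$ and hence~$\norm{(S_\xx^*S_\xx+\la I)^{-1}}\leq 1/\la$ (and~$\norm{(S_\xx^*S_\xx+\la I)^{-1/2}}^2\leq 1/\la$) in operator norm. Writing~$L_K+\la I=(S_\xx^*S_\xx+\la I)+(L_K-S_\xx^*S_\xx)$ and factoring,
$$(S_\xx^*S_\xx+\la I)^{-1}(L_K+\la I)=I+(S_\xx^*S_\xx+\la I)^{-1}(L_K-S_\xx^*S_\xx),$$
the second estimate is bounded by~$1+\frac{1}{\la}\cdot\frac{\la}{2}=\frac{3}{2}\leq 2$, using the first estimate. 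For the third, set~$C=(S_\xx^*S_\xx+\la I)^{-1/2}(L_K+\la I)^{1/2}$ and compute~$CC^*=(S_\xx^*S_\xx+\la I)^{-1/2}(L_K+\la I)(S_\xx^*S_\xx+\la I)^{-1/2}=I+(S_\xx^*S_\xx+\la I)^{-1/2}(L_K-S_\xx^*S_\xx)(S_\xx^*S_\xx+\la I)^{-1/2}$, whence~$\norm{C}^2=\norm{CC^*}\leq 1+\frac{1}{\la}\cdot\frac{\la}{2}=\frac{3}{2}\leq 2$, giving~$\norm{C}\leq\sqrt{2}$.

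The only genuine subtlety is the interplay between the Hilbert--Schmidt norm~$\mathcal{L}_2(\HH_2)$ in which the concentration bound is phrased and the operator norm needed in the perturbation step: the factored remainders above are perturbations of the identity, hence not Hilbert--Schmidt, so the second and third estimates are naturally read in operator norm. This causes no difficulty, since~$\norm{S_\xx^*S_\xx-L_K}\leq\norm{S_\xx^*S_\xx-L_K}_{\mathcal{L}_2(\HH_2)}\leq\la/2$ transfers the first bound to the operator norm at no cost, and all resolvent factors are controlled uniformly by the spectral lower bound~$S_\xx^*S_\xx+\la I\geq\la I$.
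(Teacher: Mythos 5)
Your proof is correct, and for two of the three estimates it takes a genuinely different route from the paper. The first estimate is obtained exactly as in the paper: the third bound of Proposition~\ref{main.bound} at level~$\eta/2$, the observation~$1/m\leq 1/\sqrt{m}$, and the hypothesis~\eqref{l.la.condition}. For the second estimate the paper writes~$(S_\xx^*S_\xx+\la I)^{-1}(L_K+\la I)=\{I-(L_K+\la I)^{-1}(L_K-S_\xx^*S_\xx)\}^{-1}$ and sums a Neumann series, using that the first estimate gives~$\|(L_K+\la I)^{-1}(L_K-S_\xx^*S_\xx)\|\leq 1/2$, which yields the constant~$2$; you instead factor on the other side,~$(S_\xx^*S_\xx+\la I)^{-1}(L_K+\la I)=I+(S_\xx^*S_\xx+\la I)^{-1}(L_K-S_\xx^*S_\xx)$, and use the deterministic spectral bound~$\|(S_\xx^*S_\xx+\la I)^{-1}\|\leq 1/\la$, getting~$3/2$. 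For the third estimate the paper invokes an external result, \cite[Prop.~5.7]{Blanchard} (a Cordes/Heinz-type inequality of the form~$\|A^{-1/2}B^{1/2}\|\leq\|A^{-1}B\|^{1/2}$), whereas your computation of~$CC^*$ together with the~$C^*$-identity~$\|C\|^2=\|CC^*\|$ is self-contained and elementary. Your route buys slightly sharper constants, avoids the Neumann series, and needs no citation; the paper's route reuses the operator~$(L_K+\la I)^{-1}(L_K-S_\xx^*S_\xx)$, which is the form in which the perturbation appears elsewhere in its analysis. A further point in your favour: you explicitly note that the second and third operators are perturbations of the identity and therefore not Hilbert--Schmidt, so those bounds must be read in operator norm, with the first bound transferred via~$\|\cdot\|_{\mathcal{L}(\HH_2)}\leq\|\cdot\|_{\mathcal{L}_2(\HH_2)}$. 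The paper's Neumann-series display, taken literally in the~$\mathcal{L}_2$ norm, would bound a quantity that is actually infinite (the identity is not Hilbert--Schmidt), so your reading is the correct one and quietly repairs a notational slip in the original.
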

\begin{proof}
From Proposition~\ref{main.bound} under the Assumptions~\ref{assmpt1}, the following inequality holds with the confidence~$1-\eta/2$,
\begin{equation*}
\|S_\xx^*S_\xx-L_K\|_{\mathcal{L}_2(\HH_2)}\leq 2\left(\frac{\kappa^2}{m}+\frac{\kappa^2}{\sqrt{m}}\right)\log\left(\frac{4}{\eta}\right).
\end{equation*}

Then under the condition~(\ref{l.la.condition}), we get with the confidence~$1-\eta/2$,
\begin{equation*}
\|S_\xx^*S_\xx-L_K\|_{\mathcal{L}_2(\HH_2)}\leq \frac{4\kappa^2}{\sqrt{m}}\log\left(\frac{4}{\eta}\right)\leq \frac{\la}{2}
\end{equation*} 

which implies
\begin{equation}\label{LK.I.LK.Sx}
\|(L_K+\la I)^{-1}(S_\xx^*S_\xx-L_K)\|_{\mathcal{L}_2(\HH_2)}\leq \frac{1}{2}.
\end{equation}

For the second inequality, we consider
$$(S_\xx^*S_\xx+\la I)^{-1}(L_K+\la I)=\{I-(L_K+\la I)^{-1}(L_K-S_\xx^*S_\xx)\}^{-1}$$
which implies
\begin{equation}\label{SS.LK}
\|(S_\xx^*S_\xx+\la I)^{-1}(L_K+\la I)\|_{\mathcal{L}_2(\HH_2)}\leq \sum\limits_{n=0}^\infty\|(L_K+\la I)^{-1}(L_K-S_\xx^*S_\xx)\|_{\mathcal{L}_2(\HH_2)}^n.
\end{equation}


Consequently, using~(\ref{LK.I.LK.Sx}) in the inequality~(\ref{SS.LK}) we obtain with the probability~$1-\eta/2$,
\begin{equation*}
\|(S_\xx^*S_\xx+\la I)^{-1}(L_K+\la I)\|_{\mathcal{L}_2(\HH_2)}\leq 2.
\end{equation*}

Applying \cite[Prop.~5.7]{Blanchard} we get with the probability~$1-\eta/2$,
\begin{equation*}
\|(S_\xx^*S_\xx+\la I)^{-1/2}(L_K+\la I)^{1/2}\|_{\mathcal{L}_2(\HH_2)}\leq\|(S_\xx^*S_\xx+\la I)^{-1}(L_K+\la I)\|_{\mathcal{L}_2(\HH_2)}^{1/2}\leq \sqrt{2}.
\end{equation*}
\end{proof}

\section{Proof of the consistency results}
\label{sec:proof-consistency}

Throughout the analysis we use the following identity in the real Hilbert space~$\HH$:
\begin{equation*}
\norm{f-h}_{\HH}^2-\norm{f-g}_{\HH}^2 =\norm{g-h}_{\HH}^2-2\inner{f-g,h-g}_{\HH}\qquad f,g,h\in\HH.
\end{equation*}

\begin{proof}[Proof of Theorem \ref{thm:consistency-rms}]
  By the definition of~$f_{\zz,\la}$ as a solution to the minimization problem~(\ref{fzl}), we get the inequality
\begin{equation*}
  \norm{S_\xx A(f_{\zz,\la})-\yy}^2_m
  +\la\|f_{\zz,\la}-\fbar\|_{\HH_1}^2 \leq
  \norm{S_\xx A(f_\rho)-\yy}^2_m
  +\la\|f_\rho-\fbar\|_{\HH_1}^2.
\end{equation*}
It follows that
\begin{multline}
  \label{c1}
  \norm{S_\xx \brac{A(f_{\zz,\la})-A(f_\rho)}}^2_m 
  +\la\|f_{\zz,\la}-\fbar\|_{\HH_1}^2 
  \leq 2\inner{S_\xx \brac{A(f_\rho)-A(f_{\zz,\la})},S_\xx A(f_\rho)-\yy}_m
  +\la\|f_\rho-\fbar\|_{\HH_1}^2.
\end{multline}

Consequently, we get
\begin{align}\label{c11}
&\norm{I_K\brac{A(f_{\zz,\la})-A(f_\rho)}}_{\mathscr{L}^2(X,\rho_X;Y)}^2+\la\|f_{\zz,\la}-\fbar\|_{\HH_1}^2 \\  \nonumber
\leq &  2\inner{A(f_\rho)-A(f_{\zz,\la}),S_\xx^*\{S_\xx A(f_\rho)-\yy\}}_{\HH_2}+\la\|f_\rho-\fbar\|_{\HH_1}^2\\ \nonumber
&+\inner{\paren{L_K-S_\xx^*S_\xx}\brac{A(f_{\zz,\la})-A(f_\rho)},A(f_{\zz,\la})-A(f_\rho)}_{\HH_2}
\end{align}
and
\begin{equation*}
\|f_{\zz,\la}-f_\rho\|_{\HH_1}^2 \leq \frac{2}{\la}\|A(f_{\zz,\la})-A(f_\rho)\|_{\HH_2}\|S_\xx^*\{S_\xx A(f_\rho)-\yy\}\|_{\HH_2}+2\inner{f_\rho-f_{\zz,\la},f_\rho-\fbar}_{\HH_1}.
\end{equation*}

Under the Lipschitz continuity of the operator~$A$ (Assumption~\ref{ass:Lipschitz}) (i.e.,~$\norm{A(f)-A(f_\rho)}_{\HH_2}\leq L\norm{f-f_\rho}_{\HH_1}$ for~$f\in\HH_1$) we get,
\begin{equation*}
\|f_{\zz,\la}-f_\rho\|_{\HH_1}^2 \leq \frac{2L}{\la}\|f_{\zz,\la}-f_\rho\|_{\HH_1}\|S_\xx^*\{S_\xx A(f_\rho)-\yy\}\|_{\HH_2}+2\norm{f_{\zz,\la}-f_\rho}_{\HH_1}\norm{f_\rho-\fbar}_{\HH_1}
\end{equation*}
which implies
\begin{equation}\label{fz.bound}
\|f_{\zz,\la}-f_\rho\|_{\HH_1} \leq \frac{2L}{\la}\|S_\xx^*\{S_\xx A(f_\rho)-\yy\}\|_{\HH_2}+2\norm{f_\rho-\fbar}_{\HH_1}.
\end{equation}

Now squaring both sides and taking expectation with respect to~$\zz$ we obtain,
\begin{equation}\label{fz.bound1}
\mathbb{E}_\zz\paren{\|f_{\zz,\la}-f_\rho\|_{\HH_1}^2} \leq \frac{8L^2}{\la^2}\mathbb{E}_\zz\paren{\|S_\xx^*\{S_\xx A(f_\rho)-\yy\}\|_{\HH_2}^2}+8\norm{f_\rho-\fbar}_{\HH_1}^2.
\end{equation}

Under Assumptions~\ref{fp},~\ref{assmpt1} and~$\sigma_\rho^2=\mathbb{E}_z\paren{\norm{y-A(f_\rho)(x)}_Y^2}<\infty$ we have that
\begin{align}\label{con.ineq1}
\mathbb{E}_\zz\paren{\|S_\xx^*\{S_\xx A(f_\rho)-\yy\}\|_{\HH_2}^2}
&=\frac{1}{m^2}\mathbb{E}_\zz\paren{\sum\limits_{i,j=1}^m
\inner{K_{x_i}\brac{y_i-A(f_\rho)(x_i)},K_{x_j}\brac{y_j-A(f_\rho)(x_j)}}_{\HH_2}}\\   \nonumber
&=\frac{1}{m^2}\mathbb{E}_\zz\paren{\sum\limits_{i=1}^m
\norm{K_{x_i}\brac{y_i-A(f_\rho)(x_i)}}_{\HH_2}^2}\\  \nonumber
&\leq\frac{\kappa^2}{m^2}\mathbb{E}_\zz\paren{\sum\limits_{i=1}^m
\norm{y_i-A(f_\rho)(x_i)}_{Y}^2}=\frac{\kappa^2\sigma_\rho^2}{m},
\end{align}
and from \cite[Lemma~1]{Smale2} we have,
\begin{equation}\label{con.ineq2}
\mathbb{E}_\xx\paren{\|S_\xx^*S_\xx-L_K\|_{\mathcal{L}(\HH_2)}^2}\leq \frac{\kappa^2}{m}.
\end{equation}

Using~\eqref{con.ineq1} in~\eqref{fz.bound1} we get,
\begin{equation}\label{c2.exp}
\mathbb{E}_\zz\paren{\|f_{\zz,\la}-f_\rho\|_{\HH_1}^2} \leq \frac{8\kappa^2\sigma_\rho^2L^2}{\la^2m} +8\|f_\rho-\fbar\|_{\HH_1}^2
\end{equation}
from which with the parameter choice rule~(\ref{la.choice1}) we deduce that
\begin{equation}\label{E.fzl.bound}
\limsup\limits_{m\to\infty}\mathbb{E}_\zz\paren{\|f_{\zz,\la}-f_\rho\|_{\HH_1}^2} \leq 8\|f_\rho-\fbar\|_{\HH_1}^2.
\end{equation}

Hence, we observe that~$a^2:=\sup\limits_{m\in\NN} \mathbb{E}_\zz\paren{\norm{f_{\zz,\la}-f_\rho}_{\HH_1}^2}<\infty$. Now, we show that there exists a subsequence of~$(f_{\zz,\la})_{m\in\NN}$, denoted by~$(f_{\zz(k),\la})_{k\in\NN}$, such that
\begin{equation}\label{Exp.fz}
\mathbb{E}_{\zz(k)}\paren{\inner{f_{\zz(k),\la}-f_\rho,f}_{\HH_1}}\to\inner{\tilde{f},f}_{\HH_1} \text{  as }~k\to\infty
\end{equation} 
for some~$\tilde{f}\in\HH_1$ and for all~$f\in\HH_1$.

Let~$\brac{e_i:i\in\NN}$ be a complete orthonormal basis of the separable Hilbert space~$\HH_1$. By the Cauchy-Schwarz inequality, we have~$\abs{\EE_\zz\paren{\inner{f_{\zz,\la}-f_\rho,e_1}_{\HH_1}}}^2\leq\EE_\zz\paren{\norm{f_{\zz,\la}-f_\rho}_{\HH_1}^2}\norm{e_1}_{\HH_1}^2<\infty$. Hence, there exists a subsequence~$(f_{\zz(k),\la})_{k\in\NN}$ such that~$\EE_{\zz(k)}\paren{\inner{f_{\zz(k),\la}-f_\rho,e_1}_{\HH_1}}\to\xi_1$ as~$k\to\infty$ for some~$\xi_1\in\RR$. Repeating the same arguments, we again get a subsequence~$(f_{\zz(k),\la})_{k\in\NN}$ such that~$\EE_{\zz(k)}\paren{\inner{f_{\zz(k),\la}-f_\rho,e_2}_{\HH_1}}\to\xi_2$ as~$k\to\infty$ for some~$\xi_2\in\RR$ and so on.
Therefore, we obtain a diagonal sequence~$(f_{\zz(k),\la})_{k\in\NN}$ with the property:
$\EE_{\zz(k)}\paren{\inner{f_{\zz(k),\la}-f_\rho,e_i}_{\HH_1}}\to\xi_i$ as~$k\to\infty$ for all~$i\in\NN$. For all~$\ell\in\NN$ we have,
\begin{equation*}
\sum\limits_{i=1}^\ell\xi_i^2\leq \lim\limits_{k\to\infty}\EE_{\zz(k)}\paren{\sum\limits_{i=1}^\ell\abs{\inner{f_{\zz(k),\la}-f_\rho,e_i}_{\HH_1}}^2} \leq \limsup\limits_{k\to\infty}\EE_{\zz(k)}\paren{\norm{f_{\zz(k),\la}-f_\rho}_{\HH_1}^2} \leq a^2.
\end{equation*}

Hence,~$\tilde{f}:=\sum\limits_{i=1}^\infty \xi_i e_i$ is well defined. For~$\varepsilon>0$, choose~$\ell\in\NN$ such that~$\sum\limits_{i=\ell+1}^\infty\inner{f,e_i}_{\HH_1}^2\leq \paren{\frac{\varepsilon}{4a}}^2$. There exists~$K>0$ such that
\begin{equation*}
\abs{\sum\limits_{i=1}^\ell\inner{f,e_i}_{\HH_1}\EE_{\zz(k)}\paren{\inner{\tilde{f}+f_\rho-f_{\zz(k),\la},e_i}_{\HH_1}}}\leq\frac{\varepsilon}{2}
\end{equation*}
for~$k\geq K$. Now, it follows from the Cauchy-Schwarz inequality that~$\abs{\EE_{\zz(k)}\paren{\inner{\tilde{f}+f_\rho-f_{\zz(k),\la},f}_{\HH_1}}}\leq\varepsilon$ for~$k\geq K$. This proves the claim~\eqref{Exp.fz}. 

From inequality~(\ref{c11}) we get:
\begin{align*}
&\mathbb{E}_\zz\paren{\|I_K\{A(f_{\zz,\la})-A(f_\rho)\}\|_{\mathscr{L}^2(X,\rho_X;Y)}^2}+\la\mathbb{E}_\zz\paren{\|f_{\zz,\la}-\fbar\|_{\HH_1}^2} \\ 
\leq  & 2 \mathbb{E}_\zz\paren{\|S_\xx^*\{S_\xx A(f_\rho)-\yy\}\|_{\HH_2}\|A(f_{\zz,\la})-A(f_\rho)\|_{\HH_2}}\\
&+\mathbb{E}_\zz\paren{\|S_\xx^*S_\xx-L_K\|_{\mathcal{L}(\HH_2)}\|A(f_{\zz,\la})-A(f_\rho)\|_{\HH_2}^2}+\la\|f_\rho-\fbar\|_{\HH_1}^2\\
\leq  & 2 \sbrac{\mathbb{E}_\zz\paren{\|S_\xx^*\{S_\xx A(f_\rho)-\yy\}\|_{\HH_2}^2}}^{1/2}\sbrac{\mathbb{E}_\zz\paren{\|A(f_{\zz,\la})-A(f_\rho)\|_{\HH_2}^2}}^{1/2}\\
&+\sbrac{\mathbb{E}_\zz\paren{\|S_\xx^*S_\xx-L_K\|_{\mathcal{L}(\HH_2)}^2}}^{1/2}\sbrac{\mathbb{E}_\zz\paren{\|A(f_{\zz,\la})-A(f_\rho)\|_{\HH_2}^4}}^{1/2}+\la\|f_\rho-\fbar\|_{\HH_1}^2.
\end{align*}

Under the Lipschitz continuity of~$A$, from~\eqref{con.ineq1},~\eqref{con.ineq2},~\eqref{E.fzl.bound} with the parameter choice rule~(\ref{la.choice1}) we obtain
\begin{equation}\label{Exp.Afz}
\mathbb{E}_\zz\paren{\|I_K\{A(f_{\zz,\la})-A(f_\rho)\}\|_{\mathscr{L}^2(X,\rho_X;Y)}^2}\to 0  \text{  as }~m\to\infty.
\end{equation}

We have~$\mathcal{D}(A)$ is weakly closed and~$A:\HH_1\to\HH_2$ is Lipschitz continuous, this implies that~$I_KA:\HH_1\to\LL$ is weakly sequentially closed. 

Now from~\eqref{Exp.fz},~\eqref{Exp.Afz} we obtain a subsequence again denoted by~$(f_{\zz(k),\la})_{k\in\NN}$ such that~$\inner{f_{\zz(k),\la}-f_\rho,f}_{\HH_1}\to\inner{\tilde{f},f}_{\HH_1}$
for some~$\tilde{f}\in\HH_1$, for all~$f\in\HH_1$ and~$\|I_K\{A(f_{\zz(k),\la})-A(f_\rho)\}\|_{\mathscr{L}^2(X,\rho_X;Y)}\to 0$ as~$k\to\infty$ almost surely, hence the weak closedness and one-to-one assumption on assumption on~$I_KA$ imply that~$\tilde{f}=0$.

Our next aim is to prove the convergence~\eqref{consistency.rate.exp}.
By contradiction, assume that there exists an~$\varepsilon> 0$ and a subsequence~$(f_{\zz(k),\la})_{k\in\NN}$ such that
\begin{equation}\label{E.c3}
\mathbb{E}_{\zz(k)}\paren{\|f_{\zz(k),\la}-f_\rho\|_{\HH_1}^2}\geq\varepsilon  \text{     for all  } k \in \NN.
\end{equation}

We have the identity
\begin{equation}\label{E.c4}
\|f_{\zz(k),\la}-f_\rho\|_{\HH_1}^2=\|f_{\zz(k),\la}-\fbar\|_{\HH_1}^2+\|f_\rho-\fbar\|_{\HH_1}^2+2\langle \fbar-f_{\zz(k),\la},f_\rho-\fbar\rangle_{\HH_1}.
\end{equation}

Using the same arguments as above, we can again find a further subsequence~$(f_{\zz(k),\la})_{k\in\NN}$ such that
$\mathbb{E}_\zz\inner{f_{\zz(k),\la}-f_\rho,f}\to 0$
for all~$f\in \HH_1$ as~$k\to \infty$. Hence from the inequalities~(\ref{E.fzl.bound}) and~(\ref{E.c4}) we obtain
\begin{align*}
\limsup\limits_{k\to\infty}\mathbb{E}_{\zz(k)}\paren{\|f_{\zz(k),\la}-f_\rho\|_{\HH_1}^2}
&\leq 2\|f_\rho-\fbar\|_{\HH_1}^2+2\limsup\limits_{k\to\infty}\mathbb{E}_{\zz(k)}\paren{\langle \fbar-f_{\zz(k),\la},f_\rho-\fbar\rangle_{\HH_1}}\\
&=2\limsup\limits_{k\to\infty}\mathbb{E}_{\zz(k)}\paren{\langle f_\rho-f_{\zz(k),\la},f_\rho-\fbar\rangle_{\HH_1}}=0,
\end{align*}
which contradicts~(\ref{E.c3}). This completes the proof of the desired result~(\ref{consistency.rate.exp}).
\end{proof}

\vspace{4mm}

\begin{proof}[Proof of Theorem \ref{thm:asconsistency}]
From the inequality~\eqref{fz.bound} and Proposition~\ref{main.bound} under Assumptions~\ref{assmpt2}--\ref{ass:Lipschitz}, the following inequality holds with the confidence~$1-\eta/2$:
\begin{equation}\label{c2}
\|f_{\zz,\la}-f_\rho\|_{\HH_1} \leq \frac{4\kappa(M+\Sigma)L}{\la\sqrt{m}} \log\left(\frac{4}{\eta}\right) +2\|f_\rho-\fbar\|_{\HH_1}.
\end{equation}

Choosing the parameter~$\eta(m)=4/m^2$, we obtain
\begin{equation*}
\mathbb{P}_{\zz\in Z^m}\brac{E_m: \norm{f_{\zz,\la}-f_\rho}_{\HH_1}> 8\kappa L(M+\Sigma)\frac{\log m}{\la \sqrt{m}}+2\norm{f_\rho-\fbar}_{\HH_1}}\leq \frac{2}{m^2}.
\end{equation*}

Therefore, the sum of the probabilities of the events~$E_m$ is finite:
\begin{equation*}
\sum\limits_{m=1}^\infty \mathbb{P}_{\zz\in Z^m}\paren{E_m}\leq\sum\limits_{m=1}^\infty \frac{2}{m^2}<\infty.
\end{equation*}

Hence applying the Borel-Cantelli lemma we get,
\begin{equation*}
\mathbb{P}_{\zz\in Z^m}\paren{\limsup\limits_{m\to\infty} E_m}=0
\end{equation*}
from which with the parameter choice rule~(\ref{la.choice}) we deduce that
\begin{equation}\label{fzl.bound}
\limsup\limits_{m\to\infty}\|f_{\zz,\la}-f_\rho\|_{\HH_1} \leq 2\|f_\rho-\fbar\|_{\HH_1}
\end{equation}
almost surely. Note that~$f_{\zz,\la}$ is finite almost surely due to~(\ref{fzl.bound}). Hence, there exists a subsequence of~$(f_{\zz,\la})_{m\in\NN}$ which weakly converges to some~$\tilde{f}$. We denote the subsequence by~$(f_{\zz(k),\la})_{k\in\NN}$, i.e.,~$f_{\zz(k),\la}\rightharpoonup \tilde{f}$. The next step of the proof is to show that~$\tilde{f}=f_\rho$.

From inequality~(\ref{c11}) and Proposition~\ref{main.bound} under Assumptions~\ref{assmpt2}--\ref{assmpt1}, the following inequality holds with  confidence~$1-\eta$,
\begin{align*}
  \|I_K\{A(f_{\zz,\la})-A(f_\rho)\}\|_{\mathscr{L}^2(X,\rho_X;Y)}^2
  \leq & \frac{4\kappa(M+\Sigma)}{\sqrt{m}}\|A(f_{\zz,\la})-A(f_\rho)\|_{\HH_2} \log\left(\frac{4}{\eta}\right) \\ 
&  +\frac{4\kappa^2}{\sqrt{m}}\|A(f_{\zz,\la})-A(f_\rho)\|_{\HH_2}^2\log\left(\frac{4}{\eta}\right) +\la\|f_\rho-\fbar\|_{\HH_1}^2.
\end{align*}

Using the arguments similar to above, with the parameter choice rule~(\ref{la.choice}) we obtain~$\|I_K\{A(f_{\zz,\la})-A(f_\rho)\}\|_{\mathscr{L}^2(X,\rho_X;Y)}\to 0$ almost surely. Now~$f_{\zz(k),\la}\rightharpoonup \tilde{f}$ in~$\HH_1$ and~$\|I_K\{A(f_{\zz(k),\la})-A(f_\rho)\}\|_{\mathscr{L}^2(X,\rho_X;Y)}\to 0$ as~$k\to \infty$ almost surely, hence the weak closedness and one-to-one assumption on assumption on~$I_KA$ imply that~$\tilde{f}=f_\rho$.

Our next aim is to prove the convergence~\eqref{consistency.rate}.
By contradiction, assume that there exists an~$\varepsilon> 0$ and a subsequence~$(f_{\zz(k),\la})_{k\in\NN}$ such that
\begin{equation}\label{c3}
\|f_{\zz(k),\la}-f_\rho\|_{\HH_1}^2\geq\varepsilon.
\end{equation}

We have the identity
\begin{equation}\label{c4}
\|f_{\zz(k),\la}-f_\rho\|_{\HH_1}^2=\|f_{\zz(k),\la}-\fbar\|_{\HH_1}^2+\|f_\rho-\fbar\|_{\HH_1}^2+2\langle \fbar-f_{\zz(k),\la},f_\rho-\fbar\rangle_{\HH_1}.
\end{equation}

Using the same arguments as above, we can again find a further subsequence~$(f_{\zz(k),\la})_{k\in\NN}$ which weakly converges to~$f_\rho$. Hence from the inequalities~(\ref{fzl.bound}) and~(\ref{c4}) we obtain almost surely,
\begin{align*}
\limsup\limits_{m\to\infty}\|f_{\zz(k),\la}-f_\rho\|_{\HH_1}^2&\leq 2\|f_\rho-\fbar\|_{\HH_1}^2+2\limsup\limits_{m\to\infty}\langle \fbar-f_{\zz(k),\la},f_\rho-\fbar\rangle_{\HH_1}\\ &=2\limsup\limits_{m\to\infty}\langle f_\rho-f_{\zz(k),\la},f_\rho-\fbar\rangle_{\HH_1}=0,
\end{align*}
which contradicts~(\ref{c3}). This completes the proof of the desired result~(\ref{consistency.rate}).
\end{proof}

\section{Proof of upper rates}\label{sec:proof-upper-rates} 
Here, we introduce some operators~$\Delta:=S_\xx A(f_\rho)-\yy$ and~$\Xi:=S_\xx(S_\xx^*S_\xx+\la I)^{-1}S_\xx^*$ used in the analysis of upper rates.

\begin{proof}[Proof of Theorem \ref{converge}]
By the definition of~$f_{\zz,\la}$ as a solution to the minimization problem~(\ref{fzl}), the inequality holds true:
\begin{equation*}
\norm{S_\xx A(f_{\zz,\la})-\yy}_m^2+\la\norm{f_{\zz,\la}-\fbar}_{\HH_1}^2 \leq \norm{S_\xx A(f_\rho)-\yy}_m^2+\la\norm{f_\rho-\fbar}_{\HH_1}^2
\end{equation*}
which implies
\begin{equation*}
\norm{S_\xx\brac{A(f_{\zz,\la})-A(f_\rho)}}_{m}^2+\la\norm{f_{\zz,\la}-f_\rho}_{\HH_1}^2 \leq 2\la\inner{f_\rho-\fbar,f_\rho-f_{\zz,\la}}_{\HH_1}+2\inner{A(f_\rho)-A(f_{\zz,\la}),S_\xx^*\Delta}_{\HH_2}.
\end{equation*}

Under the conditions (i) and (iii) of Assumption~\ref{A.assumption}, for~$f\in \mathcal{B}_d(f_\rho)$ we get
\begin{equation}\label{Taylor_exp_Rla}
A(f)=A(f_\rho)+A'(f_\rho)(f-f_\rho)+r(f)
\end{equation}
holds with
\begin{equation}\label{R_la}
\|I_Kr(f)\|_{\LL}\leq \frac{\gamma}{2}\|f-f_\rho\|_{\HH_1}^2 
\end{equation}
and
\begin{align}\label{Rla}
\norm{r(f)}_{\HH_2} =& \norm{A(f)-A(f_\rho)-A'(f_\rho)(f-f_\rho)}_{\HH_2}    = \norm{\int_{0}^1 \brac{A'\paren{f_\rho+t(f-f_\rho)}-A'(f_\rho)}\paren{f-f_\rho}dt}_{\HH_2}\\ \nonumber
\leq& \int_{0}^1\norm{\brac{A'\paren{f_\rho+t(f-f_\rho)}-A'(f_\rho)}}_{\HH_1\to\HH_2}\norm{f-f_\rho}_{\HH_1}dt \leq 2L\norm{f-f_\rho}_{\HH_1}.
\end{align}

Note that under condition~\eqref{l.la.condition.k}, from inequality~\eqref{c2} we get~$f_{\zz,\la}\in \mathcal{B}_d(f_\rho)$ with confidence~$1-\eta/2$ for~$d>4\norm{f_\rho-\fbar}_{\HH_1}$, therefore using the linearization of the non-linear operator~$A$ in \eqref{Taylor_exp_Rla} at~$f_{\zz,\la}$ and under Assumption~\ref{source.cond} we obtain,
\begin{align*}
&\norm{I_K\brac{A(f_{\zz,\la})-A(f_\rho)}}_{\LL}^2+\la\norm{f_{\zz,\la}-f_\rho}_{\HH_1}^2 \\
\leq & 2\la\inner{T^{1/2}g,f_\rho-f_{\zz,\la}}_{\HH_1}+2\inner{A(f_\rho)-A(f_{\zz,\la}),L_K(L_K+\la I)^{-1}S_\xx^*\Delta}_{\HH_2} \\
&+2\la\inner{A(f_\rho)-A(f_{\zz,\la}),(L_K+\la I)^{-1}S_\xx^*\Delta}_{\HH_2}+\inner{(L_K-S_\xx^*S_\xx)\brac{A(f_{\zz,\la})-A(f_\rho)},A(f_{\zz,\la})-A(f_\rho)}_{\HH_2}\\
\leq & 2\la\inner{g,T^{1/2}(f_\rho-f_{\zz,\la})}_{\HH_1}+2\mathcal{S}_e\norm{I_K\brac{A(f_{\zz,\la})-A(f_\rho)}}_{\LL}+2\sqrt{\la}L\mathcal{S}_e\norm{f_{\zz,\la}-f_\rho}_{\HH_1}\\
&+I_1\norm{A(f_{\zz,\la})-A(f_\rho)}_{\HH_2}^2\\
\leq & 2\la R\norm{I_K\brac{A(f_\rho)-A(f_{\zz,\la})+r(f_{\zz,\la})}}_{\LL}+2\mathcal{S}_e\norm{I_K\brac{A(f_{\zz,\la})-A(f_\rho)}}_{\LL}\\
&+2\sqrt{\la}L\mathcal{S}_e\norm{f_{\zz,\la}-f_\rho}_{\HH_1}+L^2I_1\norm{f_{\zz,\la}-f_\rho}_{\HH_1}^2\\
\leq & 2\paren{\la R+\mathcal{S}_e}\norm{I_K\brac{A(f_{\zz,\la})-A(f_\rho)}}_{\LL}+2\sqrt{\la}L\mathcal{S}_e\norm{f_{\zz,\la}-f_\rho}_{\HH_1}\\
&+L^2I_1\norm{f_{\zz,\la}-f_\rho}_{\HH_1}^2+\la\gamma R\norm{f_{\zz,\la}-f_\rho}_{\HH_1}^2,
\end{align*}
where~$I_1=\norm{S_\xx^*S_\xx-L_K}_{L(\HH_2)}$ and~$\mathcal{S}_e=\norm{(L_K+\la I)^{-1/2}S_\xx^*(S_\xx A(f_\rho)-\yy)}_{\HH_2}$.

It gives
\begin{align*}
&\paren{\norm{I_K\brac{A(f_{\zz,\la})-A(f_\rho)}}_{\LL}-\la R-\mathcal{S}_e}^2+\paren{\sqrt{\la\gamma_1}\norm{f_{\zz,\la}-f_\rho}_{\HH_1}-\frac{L}{\sqrt{\gamma_1}}\mathcal{S}_e}^2 \\
&\leq \paren{\la R+\mathcal{S}_e}^2+\frac{L^2}{\gamma_1}\mathcal{S}_e^2
\end{align*}
where~$\gamma_1=1-\gamma R-L^2I_1/\la$. This implies
\begin{equation*}
\norm{I_K\brac{A(f_{\zz,\la})-A(f_\rho)}}_{\LL}\leq 2R\la+\paren{2+\frac{L}{\sqrt{\gamma_1}}}\mathcal{S}_e
\end{equation*}
and
\begin{equation*}
\norm{f_{\zz,\la}-f_\rho}_{\HH_1}\leq \frac{1}{\sqrt{\gamma_1}}R\sqrt{\la}+\paren{\frac{1}{\sqrt{\gamma_1}}+\frac{2L}{\gamma_1}}\frac{\mathcal{S}_e}{\sqrt{\la}}.
\end{equation*}

Now under the Assumptions~\ref{fp}--\ref{assmpt1} using the estimates of Proposition~\ref{main.bound}, the inequality~(\ref{l.la.condition.k}) and~\eqref{smallness}, we obtain that~$\gamma_1=1/2-\gamma R>0$ and with the probability~$1-\eta$,
\begin{equation*}
\|I_K\brac{A(f_{\zz,\la})-A(f_\rho})\|_{\LL} \leq 2R \la + \frac{2(L+2\sqrt{\gamma_1})}{\sqrt{\gamma_1}}\left(\frac{\kappa M}{m\sqrt{\la}}+\sqrt{\frac{\Sigma^2\mathcal{N}(\la)}{m}}\right) \log\left(\frac{4}{\eta}\right)
\end{equation*}
and 
\begin{equation*}
\|f_{\zz,\la}-f_\rho\|_{\HH_1} \leq \frac{1}{\sqrt{\gamma_1}}R\sqrt{\la}+\frac{2(2L+\sqrt{\gamma_1})}{\gamma_1}\left(\frac{\kappa M}{m\la}+\sqrt{\frac{\Sigma^2\mathcal{N}(\la)}{m\la}}\right) \log\left(\frac{4}{\eta}\right).
\end{equation*}
which implies the desired result.
\end{proof}

For the analysis of Tikhonov regularization under general source condition, we consider the linearized and population
version (i.e. using theoretical expectation under~$\rho$) of the regularization scheme~(\ref{fzl}):
\begin{equation*}\label{fl.funl}
f_\la^l:=\mathop{\text{arg}\min}_{f \in \HH_1} \left\{\int_Z     \|A(f_\rho)(x)+A'(f_\rho)(f-f_\rho)(x)-y\|_Y^2d\rho(x,y)+\la\|f-\fbar\|_{\HH_1}^2\right\}.
\end{equation*}

Under Assumption~\ref{assmpt2}, using the fact~$\mathcal{E}(f):=\int_Z\|A(f_\rho)(x)+A'(f_\rho)(f-f_\rho)(x)-y\|_Y^2d\rho(x,y)=\|T^{1/2}(f-f_\rho)\|_{\HH_1}^2+\mathcal{E}(f_\rho)$, we get
\begin{equation}\label{fl}
f_\la^l = (T+\la I)^{-1}(Tf_\rho+\la \fbar).
\end{equation}

In the following proposition, we estimate the error bound of  approximation error~$f_\la^l-f_\rho$ which describes
the complexity of the true solution~$f_\rho$. The approximation
error is independent of the samples~$\zz$.
\begin{proposition}\label{approx.err}
Suppose Assumptions~\ref{fp},~\ref{source.cond} holds true. Then under the assumption that~$\phi(t)$ and~$t/{\phi(t)}$ are non-decreasing functions, we have
\begin{equation*}\label{fla.frho.K}
\norm{f_{\la}^l-f_\rho}_{\HH_1} \leq R\phi(\la).
\end{equation*}
\end{proposition}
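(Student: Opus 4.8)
The plan is to use the explicit formula~\eqref{fl} for the population minimizer~$f_\la^l$ together with the source condition from Assumption~\ref{source.cond}, and then to bound the resulting spectral expression using a standard monotonicity argument for index functions. First I would rewrite the approximation error in closed form. From~\eqref{fl} we have~$f_\la^l-f_\rho = (T+\la I)^{-1}(Tf_\rho + \la\fbar) - f_\rho = (T+\la I)^{-1}(Tf_\rho + \la\fbar - (T+\la I)f_\rho) = \la(T+\la I)^{-1}(\fbar - f_\rho)$. Invoking the source condition~$f_\rho - \fbar = \phi(T)g$ with~$\norm{g}_{\HH_1}\leq R$, this becomes
\begin{equation*}
f_\la^l - f_\rho = -\la(T+\la I)^{-1}\phi(T)g,
\end{equation*}
so that
\begin{equation*}
\norm{f_\la^l-f_\rho}_{\HH_1} \leq \norm{\la(T+\la I)^{-1}\phi(T)}_{\mathcal{L}(\HH_1)}\,\norm{g}_{\HH_1} \leq R\,\norm{\la(T+\la I)^{-1}\phi(T)}_{\mathcal{L}(\HH_1)}.
\end{equation*}

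The remaining task is purely spectral: since~$T$ is positive, self-adjoint and compact, by the spectral theorem the operator norm equals the supremum over the spectrum~$\sigma(T)\subset[0,\kappa^2L^2]$ of the scalar function
\begin{equation*}
t \mapsto \frac{\la\,\phi(t)}{t+\la}.
\end{equation*}
Hence it suffices to show~$\sup_{t\in[0,\kappa^2L^2]}\frac{\la\phi(t)}{t+\la}\leq \phi(\la)$. The key step is to split the supremum at~$t=\la$ and exploit the two monotonicity hypotheses. For~$t\leq\la$, the factor~$\frac{\la}{t+\la}\leq 1$ and, since~$\phi$ is nondecreasing, $\phi(t)\leq\phi(\la)$, giving the bound~$\phi(\la)$ directly. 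For~$t\geq\la$, I would write~$\frac{\la\phi(t)}{t+\la}\leq \frac{\la\phi(t)}{t} = \la\cdot\frac{\phi(t)}{t}$ and use that~$t\mapsto t/\phi(t)$ is nondecreasing (equivalently~$\phi(t)/t$ is nonincreasing) to conclude~$\frac{\phi(t)}{t}\leq\frac{\phi(\la)}{\la}$, so that~$\la\cdot\frac{\phi(t)}{t}\leq\phi(\la)$. Combining the two ranges yields the claimed bound~$R\phi(\la)$.

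The argument is essentially routine once the closed form is in hand; the main subtlety — and the only place where the specific hypotheses enter — is the case split in the spectral estimate, where one must use \emph{both} the monotonicity of~$\phi$ (controlling small eigenvalues) and the monotonicity of~$t/\phi(t)$ (controlling large eigenvalues). These are exactly the two assumptions stated in the proposition, so no additional structure is needed. I would note that the bound is uniform over~$\sigma(T)$ and independent of the sample~$\zz$, consistent with the remark preceding the statement that the approximation error depends only on the complexity of~$f_\rho$.
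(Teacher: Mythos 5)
Your proof is correct and follows essentially the same route as the paper: the paper likewise derives the closed-form identity $f_\rho-f_\la^l=\la(T+\la I)^{-1}\phi(T)g$ from~\eqref{fl} and the source condition, and then invokes the two monotonicity hypotheses to conclude $\norm{f_\la^l-f_\rho}_{\HH_1}\leq R\phi(\la)$. The only difference is that the paper leaves the spectral estimate $\sup_{t}\la\phi(t)/(t+\la)\leq\phi(\la)$ implicit, whereas you spell out the case split at $t=\la$ — a worthwhile addition, but not a different argument.
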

\begin{proof}
From the definition of~$f_\la^l$ in~\eqref{fl} and Assumption~\ref{source.cond} we get,
$$f_\rho-f_\la^l=\la(T+\la I)^{-1}\phi(T)g.$$
Under the assumption that~$\phi(t)$ and~$t/{\phi(t)}$ are non-decreasing functions, we obtain,
$$\norm{f_{\la}^l-f_\rho}_{\HH_1}\leq R\phi(\la).$$
\end{proof}

Under Assumption~\ref{source.cond} from Proposition~\ref{approx.err}, we observe that~$f_\la^l\in\mathcal{D}(A)\cap\mathcal{B}_d(f_\rho)$, provided~$\la$ is sufficiently small. 

In the following theorem, we estimate the quantity~$f_{\la}^l-f_{\zz,\la}$ and use the bound of approximation error from the above proposition to find the error bound for~$f_\rho-f_{\zz,\la}$. 
\begin{proof}[Proof of Theorem \ref{convergence}]
The main idea of the proof is to compare~$f_{\zz,\la}$ and~$f_{\la}^l$. From the definition of~$f_{\zz,\la}$ in~\eqref{fzl}, we have
\begin{equation}\label{idea}
 \norm{S_\xx A(f_{\zz,\la})-\yy}^2_m +\la \|f_{\zz,\la}-\fbar\|_{\HH_1}^2\leq  \norm{S_\xx A(f_{\la}^l)-\yy}^2_m+\la \|f_{\la}^l-\fbar\|_{\HH_1}^2.
\end{equation}

Using the linearization of operator~$A$ in~(\ref{Taylor_exp_Rla}) we reexpress the inequality~(\ref{idea}) as follows,
\begin{align*}
\|f_{\zz,\la}-f_{\la}^l\|_{\HH_1}^2&\leq 2\langle f_{\la}^l-f_{\zz,\la},f_{\la}^l-\fbar\rangle_{\HH_1}+\frac{1}{\la}\left\{\|B_\xx (f_{\la}^l-f_\rho)+\Delta+S_\xx(r(f_{\la}^l))\|_m^2 \right.   \\
&\left.- \|B_\xx (f_{\zz,\la}-f_\rho)+\Delta+S_\xx(r(f_{\zz,\la}))\|_m^2\right\}
\end{align*}

Now we decompose the second and third term in the right hand side as follows:
\begin{align*}
\|f_{\zz,\la}-f_{\la}^l\|_{\HH_1}^2 &\leq 2\langle f_{\la}^l-f_{\zz,\la},f_{\la}^l-\fbar\rangle_{\HH_1}+\frac{1}{\la}
\left\{\|\Xi\Delta+S_\xx(r(f_{\la}^l))\|_m^2\right.\\ \nonumber
&\qquad \left.+2\langle \Xi\Delta+S_\xx(r(f_{\la}^l)), B_\xx(f_{\la}^l-f_\rho)+(I-\Xi)\Delta\rangle_m\right.\\ \nonumber
&\qquad \left. -\|B_\xx(f_{\zz,\la}-f_\la^l)+\Xi\Delta+S_\xx(r(f_{\zz,\la}))\|_m^2\right.\\ \nonumber
&\qquad \left.-2\langle B_\xx(f_{\zz,\la}-f_\la^l)+\Xi\Delta+S_\xx(r(f_{\zz,\la})),B_\xx(f_{\la}^l-f_\rho)+(I-\Xi)\Delta\rangle_m\right\}.
\end{align*}

The fourth term in the right hand side is negative, therefore it can be ignored,
leading to:
\begin{align}\label{Est}
  \|f_{\zz,\la}-f_{\la}^l\|_{\HH_1}^2
&\leq \frac{1}{\la}\left\{2\langle f_{\la}^l-f_{\zz,\la},\la (f_{\la}^l-\fbar)+T_\xx(f_\la^l- f_\rho)+B_\xx^*(I-\Xi)\Delta\rangle_{\HH_1}\right.\\ \nonumber
&\qquad +2\langle r(f_{\la}^l)-r(f_{\zz,\la}),S_\xx^*B_\xx (f_{\la}^l-f_\rho)+S_\xx^*(I-\Xi)\Delta\rangle_{\HH_2}\\ \nonumber
&\qquad +\|\Xi\Delta+S_\xx(r(f_{\la}^l))\|_m^2\rbrace.
\end{align}

The definition of~$f_\la^l$ in~\eqref{fl} implies that
\begin{equation}\label{fll}
\la (f_\la^l-\fbar)=T(f_\rho-f_\la^l).
\end{equation}

Therefore, from inequality~(\ref{Est}),  using Assumption~\ref{A.assumption} (ii) and~\eqref{fll} we get:
\begin{align}\label{Estimate}
\|f_{\zz,\la}-f_{\la}^l\|_{\HH_1}^2 \leq & \frac{1}{\la}\left\{2\langle f_{\la}^l-f_{\zz,\la},(T_\xx-T)(f_\la^l- f_\rho)+B_\xx^*(I-\Xi)\Delta\rangle_{\HH_1}\right.\\ \nonumber
&+2\langle r(f_{\la}^l)-r(f_{\zz,\la}),S_\xx^*B_\xx (f_{\la}^l-f_\rho)+S_\xx^*(I-\Xi)\Delta\rangle_{\HH_2}\\ \nonumber
&+2\|\Xi\Delta\|_m^2+2\|S_\xx(r(f_{\la}^l))\|_m^2\rbrace\\ \nonumber
\leq& \frac{2}{\la}\left\{\langle f_{\la}^l-f_{\zz,\la},(T_\xx-T)(f_\la^l- f_\rho)+\la A'(f_\rho)^*(S_\xx^*S_\xx+\la I)^{-1}S_\xx^*\Delta\rangle_{\HH_1}\right.\\ \nonumber
&+\langle r(f_{\la}^l)-r(f_{\zz,\la}),(S_\xx^*S_\xx-L_K) A'(f_\rho)(f_{\la}^l-f_\rho)+\la (S_\xx^*S_\xx+\la I)^{-1}S_\xx^*\Delta\rangle_{\HH_2} \\ \nonumber
&+\langle I_K \{ r(f_{\la}^l)-r(f_{\zz,\la})\},B(f_{\la}^l-f_\rho)\rangle_{\mathscr{L}^2(X,\rho_X;Y)}+\|\Xi\Delta\|_m^2\\ \nonumber
&+\|I_Kr(f_{\la}^l)\|_{\mathscr{L}^2(X,\rho_X;Y)}^2+\langle(S_\xx^*S_\xx-L_K)r(f_{\la}^l),r(f_{\la}^l)\rangle_{\HH_2}\rbrace  \\ \nonumber
\leq& \frac{2}{\la}\left\{\norm{f_{\zz,\la}-f_{\la}^l}_{\HH_1} \left(L^2I_1\mathcal{A}_e+\sqrt{\la}L I_2\mathcal{S}_e\right)+\norm{r(f_{\la}^l)-r(f_{\zz,\la})}_{\HH_2}\left(L I_1\mathcal{A}_e+\sqrt{\la} I_2\mathcal{S}_e\right)\right.\\ \nonumber
&+\| I_K \{ r(f_{\la}^l)-r(f_{\zz,\la})\}\|_{\mathscr{L}^2(X,\rho_X;Y)} \|B(f_{\la}^l-f_\rho)\|_{\mathscr{L}^2(X,\rho_X;Y)}+I_2^2\mathcal{S}_e^2\\  \nonumber
&\left.+\|I_K r(f_{\la}^l)\|_{\mathscr{L}^2(X,\rho_X;Y)}^2+I_1\|r(f_{\la}^l)\|_{\HH_2}^2\right\},
\end{align}
where~$\mathcal{A}_e=\norm{f_{\la}^l-f_\rho}_{\HH_1}$,~$\mathcal{S}_e=\norm{(L_K+\la I)^{-1/2}S_\xx^*(S_\xx A(f_\rho)-\yy)}_{\HH_2}$, ~$I_1=\|S_\xx^*S_\xx-L_K\|_{\mathcal{L}(\HH_2)}$ and~$I_2=\|(S_\xx^*S_\xx+\la I)^{-1/2}(L_K+\la I)^{1/2}\|_{\mathcal{L}(\HH_2)}$.

We have~$\norm{B f}_{\mathscr{L}^2(X,\rho_X;Y)} \leq\norm{(T+\la I)^{1/2}f}_{\HH_1}$,~$f\in\mathcal{D}(A)\subset\HH_1$, therefore we obtain,
\begin{equation}\label{Rl}
\norm{B(f_\rho-f_\la^l)}_{\mathscr{L}^2(X,\rho_X;Y)} = \la\norm{ B(T+\la I)^{-1}T^{1/2} w}_{\mathscr{L}^2(X,\rho_X;Y)} \leq \la\norm{w}_{\HH_1}. 
\end{equation}

Using the inequalities~(\ref{R_la}),~\eqref{Rla},~(\ref{Rl}) in~(\ref{Estimate}) we obtain,
\begin{eqnarray*}
\|f_{\zz,\la}-f_{\la}^l\|_{\HH_1}^2  &\leq& \frac{2}{\la}\left\lbrace\la\gamma\norm{w}_{\HH_1}\norm{f_{\zz,\la}-f_\la^l}_{\HH_1}^2+\delta_1\|f_{\zz,\la}-f_{\la}^l\|_{\HH_1}+\delta_2 \right\rbrace,
\end{eqnarray*}
where~$\delta_1=3L^2I_1\mathcal{A}_e+3\sqrt{\la}LI_2 \mathcal{S}_e$ and~$\delta_2=I_2^2\mathcal{S}_e^2+4\sqrt{\la}L I_2\mathcal{A}_e\mathcal{S}_e+\gamma^2\mathcal{A}_e^4/4+3\gamma\la\norm{w}_{\HH_1}\mathcal{A}_e^2/2+8L^2I_1\mathcal{A}_e^2$.

Under the condition~(\ref{smallness}) we have,
\begin{equation*}
\|f_{\zz,\la}-f_{\la}^l\|_{\HH_1}^2 \leq \frac{2}{\gamma_2\la}\left\lbrace\delta_1\|f_{\zz,\la}-f_{\la}^l\|_{\HH_1}+\delta_2 \right\rbrace,
\end{equation*}
where~$\gamma_2=1-2\gamma\norm{w}_{\HH_1}$.

We have,
\begin{equation*}\label{fzl-fz}
\left(\|f_{\zz,\la}-f_{\la}^l\|_{\HH_1}-\frac{\delta_1}{\la\gamma_2}\right)^2 \leq \frac{\delta_1^2}{\la^2\gamma_2^2}+\frac{2\delta_2}{\la\gamma_2},
\end{equation*}
which implies
\begin{align*}
\|f_{\zz,\la}-f_{\la}^l\|_{\HH_1} 
&\leq \frac{2\delta_1}{\la\gamma_2}+\sqrt{\frac{2\delta_2}{\la\gamma_2}}.
\end{align*}

Using the triangle inequality~$\norm{f_{\zz,\la}-f_\rho}_{\HH_1}\leq\norm{f_{\zz,\la}-f_{\la}^l}_{\HH_1}+\norm{f_{\la}^l-f_\rho}_{\HH_1}$ we obtain,
\begin{align*}
\|f_{\zz,\la}-f_\rho\|_{\HH_1} 
&\leq  \brac{c_1+c_2\frac{2I_1}{\la}+c_3\sqrt{\frac{2I_1}{\la}}+c_4\sqrt{\frac{I_2}{2}}}\mathcal{A}_e+\brac{c_5\frac{I_2}{\sqrt{2}}+c_6\sqrt{\frac{I_2}{2}}}\left(\frac{\mathcal{S}_e}{2\sqrt{\la}}\right),
\end{align*}
where~$c_1=1+\gamma\norm{w}_{\HH_1}/\sqrt{2\gamma_2}+\sqrt{3\gamma\norm{w}_{\HH_1}/\gamma_2}$,~$c_2=3L^2/\gamma_2$,~$c_3=\sqrt{8L^2/\gamma_2}$,~$c_4=\sqrt{4L/\gamma_2}$,~$c_5=2\sqrt{2/\gamma_2}+12\sqrt{2}L/\gamma_2$ and~$c_6=2\sqrt{4L/\gamma_2}$.

Now using the estimate of Proposition~\ref{I1} with the inequality~(\ref{l.la.condition.k}), we get with the probability~$1-\eta/2$,
\begin{equation*}
\|f_{\zz,\la}-f_\rho\|_{\HH_1} \leq (c_1+c_2+c_3+c_4) \mathcal{A}_e+(c_5+c_6)\left(\frac{\mathcal{S}_e}{2\sqrt{\la}}\right).
\end{equation*}

Under Assumptions~\ref{fp}--\ref{assmpt1},~\ref{source.cond} from Proposition~\ref{main.bound},~\ref{approx.err}, we obtain with the confidence~$1-\eta$,
\begin{equation*}
\|f_{\zz,\la}-f_\rho\|_{\HH_1} \leq (c_1+c_2+c_3+c_4) R\phi(\la)+(c_5+c_6)\left(\frac{\kappa M}{m\la}+\sqrt{\frac{\Sigma^2\mathcal{N}(\la)}{m\la}}\right) \log\left(\frac{4}{\eta}\right).
\end{equation*}
which implies the desired result.
\end{proof}

\begin{proof}[Proof of Theorem \ref{err.upper.bound.p.para}]

\begin{enumerate}[(i)] 
\item Under the parameter choice~$\la=\Theta^{-1}\paren{m^{-1/2}}$ we have
$$\frac{1}{m\la}\leq\frac{\phi(\la)}{\sqrt{m}}.$$

From Theorem~\ref{convergence} and the bound~(\ref{N(l).bd}), it follows that with the confidence~$1-\eta$,
\begin{equation}\label{fzl.fp.Theta.p}
\|f_{\zz,\la}-f_\rho\|_{\HH_1} \leq C' \phi\left(\Theta^{-1}\paren{m^{-1/2}}\right)\log\left(\frac{4}{\eta}\right).
\end{equation}
where~$C':=(c_1+c_2+c_3+c_4+c_5+c_6)(R+\kappa M+\kappa L\Sigma).$

Now defining~$\tau:=C'\log\left(\frac{4}{\eta}\right)$ gives
$$\eta=\eta_{\tau}=4e^{-\tau/C'}.$$
The estimate~(\ref{fzl.fp.Theta.p}) can be reexpressed as
\begin{equation}\label{minimax.p2}
\mathbb{P}_{\zz\in Z^m}\left\{\|f_{\zz,\la}-f_\rho\|_{\HH_1}>\tau R \phi\left(\Theta^{-1}\paren{m^{-1/2}}\right)\right\}\leq \eta_{\tau}.
\end{equation}
\item 
From the condition \eqref{l.la.condition.k} we have~$8\kappa^2\leq \sqrt{m}\la$. This together with the parameter choice~$\la=\Psi^{-1}\paren{m^{-1/2}}$ implies that
$$\frac{1}{m\la}=\frac{\la^{-\frac{1}{2}+\frac{1}{2b}}\phi(\la)}{\sqrt{m}}\leq \frac{\la^{\frac{1}{2}+\frac{1}{2b}}\phi(\la)}{8\kappa^2}.$$
Now for~$\la\geq 1$ and~$b>1$ we have~$\la^{-\frac{1}{2}+\frac{1}{2b}}\leq 1$, therefore~$\frac{1}{m\la}\leq \phi(\la)$. On the other hand, for~$\la \leq 1$ we have~$\la^{\frac{1}{2}+\frac{1}{2b}}\leq 1$, therefore~$\frac{1}{m\la}\leq \frac{\phi(\la)}{8\kappa^2}$. Hence, from Theorem~\ref{convergence} and the inequality~(\ref{N(l).bound}), it follows that with the confidence~$1-\eta$,
\begin{equation}\label{fzl.fl.inter}
\|f_{\zz,\la}-f_\rho\|_{\HH_1}\leq C'' \phi\left(\Psi^{-1}\paren{m^{-1/2}}\right)\log\left(\frac{4}{\eta}\right),
\end{equation}
where~$C'':=(c_1+c_2+c_3+c_4+c_5+c_6)(R+\kappa M\max(1,\frac{1}{8\kappa^2})+\Sigma \sqrt{C_{\beta,b}}).$

Now defining~$\tau:=C''\log\left(\frac{4}{\eta}\right)$ gives
$$\eta=\eta_\tau=4e^{-\tau/C''}.$$
The estimate~(\ref{fzl.fl.inter}) can be reexpressed as
\begin{equation}\label{minimax.p1}
\mathbb{P}_{\zz\in Z^m}\left\{\|f_{\zz,\la}-f_\rho\|_{\HH_1}>\tau R\phi\left(\Psi^{-1}\paren{m^{-1/2}}\right)\right\} \leq \eta_{\tau}.
\end{equation}
\end{enumerate}
Then from~(\ref{minimax.p2}) and~(\ref{minimax.p1}) our conclusions follow.
\end{proof}

%
%
%
%
%

\section{Proof of lower rates}\label{sec:proof-lower-rates}

The following proposition is a variant of Proposition~4 \cite{Caponnetto} for the non-linear statistical inverse problem.
\begin{proposition}
For the probability measure~$\rho_f$, defined in~(\ref{p.f}), parameterized by~$f \in \mathcal{D}(A)\subset\HH_1$:
\begin{enumerate}[(i)]
\item The solution~$f_\rho$ for the probability measure~$\rho=\rho_f$ is~$f$. 
\item  The probability measure~$\rho_f$ satisfies Assumption~\ref{noise.cond} provided that
\begin{equation}\label{condition}
dJ+J/4\leq M \text{ and }2dJ\leq\Sigma.
\end{equation}
\end{enumerate}
\end{proposition}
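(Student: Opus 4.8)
The plan is to exploit the explicit two–point structure of the conditional law $\rho_f(\cdot\mid x)$, which places mass $a_j(x)/(2dJ)$ at $y=-dJv_j$ and mass $b_j(x)/(2dJ)$ at $y=dJv_j$ for $j=1,\dots,d$. First I would record that $\rho_f$ is a genuine probability measure: these weights sum to one since $a_j(x)+b_j(x)=2J$, and they are nonnegative because $\abs{\scalar{A(f)}{K_xv_j}{\HH_2}}\leq\kappa\norm{A(f)}_{\HH_2}=J/4\leq J$, where I use $\norm{K_xv_j}_{\HH_2}\leq\norm{K_x}_{HS}\norm{v_j}_Y\leq\kappa$ from Assumption~\ref{assmpt1} and $\norm{v_j}_Y=1$. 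For part (i) I would then compute the conditional mean directly from this two–point structure:
$$\mathbb{E}_y[y\mid x]=\frac{1}{2dJ}\sum_{j=1}^d dJ\,\paren{b_j(x)-a_j(x)}v_j=\sum_{j=1}^d\scalar{A(f)}{K_xv_j}{\HH_2}v_j,$$
using $b_j(x)-a_j(x)=2\scalar{A(f)}{K_xv_j}{\HH_2}$. The reproducing property gives $\scalar{A(f)}{K_xv_j}{\HH_2}=\scalar{A(f)(x)}{v_j}{Y}$, so since $\{v_j\}_{j=1}^d$ is an orthonormal basis of $Y$ the right-hand side collapses to $A(f)(x)$. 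Comparing with the defining relation of the true solution in Assumption~\ref{assmpt2} shows that $f$ satisfies $\mathbb{E}_y[y\mid x]=A(f_\rho)(x)$, and the injectivity of $A$ makes it the unique such element, whence $f_\rho=f$.

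For part (ii) I would first control the residual on the support. Every support point is of the form $y=\pm dJv_j$, and $\norm{A(f_\rho)(x)}_Y=\norm{K_x^\ast A(f)}_Y\leq\kappa\norm{A(f)}_{\HH_2}=J/4$, so by the triangle inequality $\norm{y-A(f_\rho)(x)}_Y\leq dJ+J/4$ at every point of the support. Under the first hypothesis $dJ+J/4\leq M$ the normalized residual $s:=\norm{y-A(f_\rho)(x)}_Y/M$ therefore lies in $[0,1]$, and on this range the elementary inequality $e^s-s-1\leq\tfrac{s^2}{2}e^s\leq\tfrac{e}{2}s^2$ applies. Integrating against $\rho_f(\cdot\mid x)$ gives
$$\int_Y\paren{e^{\norm{y-A(f_\rho)(x)}_Y/M}-\tfrac{\norm{y-A(f_\rho)(x)}_Y}{M}-1}d\rho(y\mid x)\leq\frac{e}{2M^2}\int_Y\norm{y-A(f_\rho)(x)}_Y^2 d\rho(y\mid x).$$
Since $\mathbb{E}_y[y\mid x]=A(f_\rho)(x)$ and each support point satisfies $\norm{y}_Y=dJ$, the second moment equals $(dJ)^2-\norm{A(f_\rho)(x)}_Y^2\leq(dJ)^2$. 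The bound therefore reduces to $\frac{e(dJ)^2}{2M^2}$, and the second hypothesis $2dJ\leq\Sigma$ (so that $\Sigma^2\geq4(dJ)^2\geq e(dJ)^2$) yields the required $\Sigma^2/(2M^2)$, establishing Assumption~\ref{noise.cond}.

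The mean computation in (i) is entirely routine. The one point that needs care is (ii): the Bernstein integral must be estimated \emph{directly}, since the implication used inside Proposition~\ref{main.bound} runs from Assumption~\ref{noise.cond} to the moment bounds and not the other way. The crux is the observation that the finitely supported, bounded residual makes the exponential–moment integrand comparable to $s^2$ once $dJ+J/4\leq M$; after that the exact variance $(dJ)^2-\norm{A(f_\rho)(x)}_Y^2$ together with $2dJ\leq\Sigma$ closes the estimate with the harmless numerical margin $e\leq4$.
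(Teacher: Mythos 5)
Your proof is correct and follows essentially the same route as the paper: part (i) by direct computation of the conditional mean via the reproducing property, and part (ii) by bounding the Bernstein integral directly, using $dJ+J/4\leq M$ to control the normalized residual on the finite support and $2dJ\leq\Sigma$ to absorb the resulting second-moment bound (the paper expands $e^s-s-1$ as a series term by term and uses the cruder bound $2d^2J^2$ where you use the exact variance identity and the closed-form inequality $e^s-s-1\leq\tfrac{e}{2}s^2$, but these are cosmetic differences). You also supply the details of part (i), which the paper dismisses as ``easily observed,'' so nothing is missing.
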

\begin{proof}
The first point can be easily observed. Now we check the condition on the probability measure~$\rho_f$ for the second point. 
 
Under the condition~\eqref{condition} for the conditional probability measure~$\rho_f(y|x)$ we have,
\begin{eqnarray*}
&&\int_Y\left(e^{\|y-A(f)(x)\|_Y/M}-\frac{\|y-A(f)(x)\|_Y}{M}-1\right)d\rho_f(y|x) \\ \nonumber
&\leq& \int_Y\|y-A(f)(x)\|_Y^2d\rho_f(y|x)\sum\limits_{i=2}^\infty \frac{(dJ+\|A(f)(x)\|_Y)^{i-2}}{M^ii!}   \\   \nonumber
&\leq& 2d^2J^2\sum\limits_{i=2}^\infty \frac{(dJ+\|A(f)(x)\|_Y)^{i-2}}{M^ii!}\leq\frac{\Sigma^2}{2M^2}
\end{eqnarray*}
which implies that for the solution~$f_\rho=f$ the probability measure~$\rho_f$ satisfies Assumption~\ref{noise.cond}.  
\end{proof} 

\begin{proposition}\label{fi.fj.kull}
Under Assumptions~\ref{assmpt1},~\ref{A.assumption1}, there is an~$\varepsilon_0 > 0$ such that for all~$0 < \varepsilon \leq \varepsilon_0$, there exists~$N_\varepsilon\in \NN$ and each~$f_1,\ldots,f_{N_\varepsilon} \in \HH_1$ (depending on~$\varepsilon$) satisfying:
\begin{enumerate}[(i)]


\item For~$i = 1, \ldots , N_\varepsilon$,~$f_i\in\Omega(\rho_{f_i},\phi,R)$  and for any~$i, j = 1, \ldots , N_\varepsilon$ with~$i \neq j$,
\begin{equation*}\label{fi.fj.cond}
\varepsilon\upsilon \leq \|f_i - f_j\|_{\HH_1},
\end{equation*}
where~$\upsilon=1-\|I-R_{f_i}\|_{\mathcal{L}(\HH_1)}-\|I-R_{f_j}\|_{\mathcal{L}(\HH_1)}$ is positive for sufficiently small~$\varepsilon$ and~$R_{f_i}$ are defined in Assumption~\ref{A.assumption1} (iv).

\item Let~$\rho_i := \rho_{f_i}$,~$\rho_j := \rho_{f_j}$ be given by~(\ref{p.f}) for~$f_i\in\Omega(\rho_i,\phi,R)$ and~$f_j\in\Omega(\rho_j,\phi,R)$,~$i, j = 1, \ldots , N_\varepsilon$, then the Kullback–Leibler information~$K(\rho_{f_i},\rho_{f_j})$ fulfills the inequality:
\begin{equation}\label{Kull_lieb}
\mathcal{K}(\rho_{f_i},\rho_{f_j}) \leq \frac{16}{15dJ^2}\|I_K\{A(f_i)-A(f_j)\}\|_{\LL}^2.
\end{equation} 

Further, it holds
\begin{equation}\label{kullback}
\mathcal{K}(\rho_i,\rho_j) \leq \widetilde{C}\left(\frac{\varepsilon^2}{\ell_\varepsilon^b}+\varepsilon^4\right),
\end{equation}
where~$N_\varepsilon \geq e^{\ell_\varepsilon/24}$ for~$\ell_\varepsilon=\left\lfloor\frac{1}{2}\left(\frac{\al}{\phi^{-1}(\varepsilon/R)}\right)^{1/b}\right\rfloor$ and ~$\widetilde{C}=\frac{16 c''}{15dJ^2}$.

 \item The eigenvalues~$(t_n^i)_{n\in\NN}$ of the operators~$T_i=A'(f_i)^*I_K^*I_KA'(f_i)$ follow the polynomial decay for the each~$f_i$ ($1\leq i\leq N_\varepsilon$): For fixed positive constants~$\al_i,\beta_i$ and~$b>1$,
\begin{equation*}
\al_i n^{-b}\leq t_n^i\leq\beta_i n^{-b}~~\forall n\in\NN.
\end{equation*}
\end{enumerate}
\end{proposition}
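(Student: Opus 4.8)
The plan is to reproduce the Caponnetto--DeVore template for minimax lower bounds, with every step adapted to the non-linearity through the link condition of Assumption~\ref{A.assumption1}(iv). The fixed reference operator is $\overline{T}=A'(\fbar)^*I_K^*I_KA'(\fbar)$, whose eigensystem $(t_n,e_n)_{n\in\NN}$ obeys the two-sided decay $\al n^{-b}\leq t_n\leq \beta n^{-b}$ by Assumption~\ref{A.assumption1}(v). First I would fix $\ell=\ell_\varepsilon$ and isolate a block of eigen-indices of $\overline{T}$ on which the eigenvalues are comparable to $\ell^{-b}$; the calibration $\ell_\varepsilon=\lfloor\frac12(\al/\phi^{-1}(\varepsilon/R))^{1/b}\rfloor$ ensures $\phi$ evaluated on this block is of order $\varepsilon/R$. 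Applying a Varshamov--Gilbert combinatorial bound on $\{-1,+1\}^{\ell}$, I would extract $N_\varepsilon$ sign vectors $\omega^{(1)},\dots,\omega^{(N_\varepsilon)}$ that are pairwise separated in Hamming distance, with $N_\varepsilon\geq e^{\ell_\varepsilon/24}$, which is exactly the cardinality bound claimed. Each codeword is turned into a candidate $f_i=\fbar+\varepsilon\,\phi(\overline{T})\sum_n \omega^{(i)}_n e_n$ (block-supported, normalized so the associated $g_i$ satisfies $\norm{g_i}_{\HH_1}\leq R$), so that all $f_i$ lie in the ball $B_d(\fbar)$ for $\varepsilon$ small.

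For part (i), the membership $f_i\in\Omega(\rho_{f_i},\phi,R)$ is where the link condition is essential: the source condition is phrased through $T_i=A'(f_i)^*I_K^*I_KA'(f_i)$, which itself depends on $f_i$, so I would use $\phi(T_i)=R_{f_i}\phi(\overline{T})$ to rewrite $f_i-\fbar=\phi(T_i)g_i$ with a suitable $g_i$ of norm at most $R$. The pairwise separation $\norm{f_i-f_j}_{\HH_1}$ is then computed from the perturbation; the factor $\upsilon=1-\norm{I-R_{f_i}}_{\mathcal{L}(\HH_1)}-\norm{I-R_{f_j}}_{\mathcal{L}(\HH_1)}$ appears precisely when the $R_{f_i}$ factors are inserted to pass from $\phi(T_i)$ to $\phi(\overline{T})$, and $\upsilon>0$ for small $\varepsilon$ because $\norm{I-R_{f_i}}_{\mathcal{L}(\HH_1)}\leq\zeta\norm{f_i-\fbar}_{\HH_1}\to 0$.

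For part (ii), I would first establish the general bound \eqref{Kull_lieb}. Using the explicit discrete form \eqref{p.f}, the density of $\rho_{f_i}$ with respect to $\rho_{f_j}$ is a ratio of the masses $a_k(x),b_k(x)$, so $\mathcal{K}(\rho_{f_i},\rho_{f_j})$ is a sum over $k$ and integral over $x$ of terms $(J\pm c)\log\frac{J\pm c}{J\pm c'}$; estimating $\log(1+u)$ and using $J=4\kappa\norm{A(f)}_{\HH_2}$ produces the constant $16/(15dJ^2)$ multiplying $\norm{I_K\{A(f_i)-A(f_j)\}}_{\LL}^2$. Then I would bound the latter by Taylor-expanding $A$ around $\fbar$ and invoking the remainder estimate in Assumption~\ref{A.assumption1}(iii): the linearized part contributes $\norm{B(f_i-f_j)}_{\LL}^2$, of order $\varepsilon^2\ell_\varepsilon^{-b}$ by the eigenvalue size on the block, while the quadratic remainder contributes the lower-order $\varepsilon^4$ term, giving \eqref{kullback} with $\widetilde C=16c''/(15dJ^2)$.

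For part (iii), I would transfer the two-sided decay from $\overline{T}$ to each $T_i$: since $A'(f_i)=A'(\fbar)+O(\norm{f_i-\fbar}_{\HH_1})$ by Assumption~\ref{A.assumption1}(iii), the operator $T_i$ is a small self-adjoint perturbation of $\overline{T}$, and the singular-value inequality $s_j(UV)\leq\norm{U}s_j(V)$ of Pietsch together with spectral closeness yields comparable constants $\al_i,\beta_i$. Finally, feeding the separation of (i) and the KL bound \eqref{kullback} into the Fano-type inequality \cite[Lemma~3.3]{DeVore} will produce the lower rate. The main obstacle is the sharpness required in part (ii): I must keep the quadratic remainder strictly subordinate to the linear $\varepsilon^2\ell_\varepsilon^{-b}$ contribution, so that the dominant term matching the linear theory survives and the non-linearity does not degrade the minimax rate $\phi(\Psi^{-1}(m^{-1/2}))$; this is exactly the place where the smallness of $\varepsilon$ and the remainder bound of Assumption~\ref{A.assumption1}(iii) must be exploited carefully.
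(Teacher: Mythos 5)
There is a genuine gap, and it sits exactly at the point your proposal treats as a mere rewriting step: the membership $f_i\in\Omega(\rho_{f_i},\phi,R)$ in part (i). You define the candidates explicitly, $f_i=\fbar+\varepsilon\,\phi(\overline{T})\sum_n \omega^{(i)}_n e_n$, and then claim that Assumption~\ref{A.assumption1}(iv) lets you ``rewrite'' this as $f_i-\fbar=\phi(T_i)g_i$ with $\norm{g_i}_{\HH_1}\leq R$. But the link condition reads $\phi(T_i)=R_{f_i}\phi(\overline{T})$, so from your construction you only get $f_i-\fbar=\phi(\overline{T})h_i=R_{f_i}^{-1}\phi(T_i)h_i$, with the operator $R_{f_i}^{-1}$ sitting on the \emph{left} of $\phi(T_i)$. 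Since $R_{f_i}^{-1}$ need not commute with $\phi(T_i)$ nor preserve its range, this does not produce any representer $g_i$ with $\phi(T_i)g_i=f_i-\fbar$; the exact source condition with respect to $T_i$ — which is what the definition of the class $\PP_{\phi,b}$ requires, since $T_i$ depends on $f_i$ itself — is simply not verified. The paper resolves this self-referential difficulty differently: for each admissible $g_i$ it considers the map $F(f)=\phi(T_f)g_i+\fbar$, where $T_f=(I_KA'(f))^*I_KA'(f)$, and shows $F$ is a contraction on a ball around $\fbar$ using the operator-Lipschitz estimates of Propositions~\ref{OP_Lip} and~\ref{TB} together with the Lipschitz continuity of $A'$ (Assumption~\ref{A.assumption1}(iii)), under the smallness condition $\gamma\sqrt{2}L_\theta\norm{g_i}_{\HH_1}<1$. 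The Banach fixed point $f_i=\phi(T_i)g_i+\fbar$ then satisfies the source condition \emph{exactly}, and only afterwards is the link condition invoked — to express $f_i-\fbar=R_{f_i}\phi(\overline{T})g_i$, from which both the separation $\varepsilon\upsilon\leq\norm{f_i-f_j}_{\HH_1}$ (this is also where the $R_{f_i}$ factors and hence $\upsilon$ genuinely enter; in your explicit construction $f_i-f_j$ contains no $R_{f_i}$ at all) and the bound $\norm{f_i-\fbar}_{\HH_1}\leq\varepsilon(1+\norm{I-R_{f_i}}_{\mathcal{L}(\HH_1)})$ follow.

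A secondary weak point is part (iii). You propose to view $T_i$ as a ``small self-adjoint perturbation'' of $\overline{T}$; additive perturbation bounds (Weyl-type) give $\abs{s_n(T_i)-s_n(\overline{T})}\leq\norm{T_i-\overline{T}}=O(\varepsilon)$, which is useless for the tail eigenvalues $t_n\sim n^{-b}\ll\varepsilon$, so two-sided decay with constants $\al_i,\beta_i$ cannot be obtained this way. The paper instead uses the \emph{multiplicative} structure again: from $\phi(T_i)=R_{f_i}\phi(\overline{T})$ and the invertibility of $R_{f_i}$ (Neumann series, valid once $\zeta\norm{f_i-\fbar}_{\HH_1}<1$), the inequality $s_j(UV)\leq\norm{U}s_j(V)$ yields the two-sided comparison $\phi(s_j(T_i))\asymp\phi(s_j(\overline{T}))$ uniformly in $j$, which transfers the polynomial decay. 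Your treatment of part (ii) (discrete-measure KL computation, Taylor expansion at $\fbar$, linear term of order $\varepsilon^2\ell_\varepsilon^{-b}$ plus quadratic remainder of order $\varepsilon^4$) does match the paper's argument, but it is built on top of the part (i) construction, so the gap above propagates.
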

\begin{proof}
For the initial guess~$\fbar$ of the solution of the functional~\eqref{fzl}, let~$(e_n)_{n\in\NN}$ be an orthonormal basis of the Hilbert space~$\HH_1$ of eigenvectors of the operator~$\overline{T}=A'(\fbar)^*I_K^*I_KA'(\fbar)$ corresponding to the eigenvalues~$(t_n)_{n\in\NN}$. For given~$\varepsilon>0$, we define
$$g=\sum\limits_{n=\ell+1}^{2\ell}\frac{\varepsilon\pi^{n-\ell}e_n}{\sqrt{\ell}\phi(t_n)},$$
where~$\pi=(\pi^1,\ldots,\pi^{\ell})\in \{-1,+1\}^{\ell}$.

Under the polynomial decay condition~$\al \leq n^bt_n$ on the eigenvalues of the operator~$\overline{T}$, we get
$$\|g\|_{\HH_1}^2=\sum\limits_{n=\ell+1}^{2\ell}\frac{\varepsilon^2}{\ell\phi^2(t_n)} \leq \sum\limits_{n=\ell+1}^{2\ell}\frac{\varepsilon^2}{\ell\phi^2\left(\frac{\al}{n^b}\right)} \leq \frac{\varepsilon^2}{\phi^2\left(\frac{\al}{2^b\ell^b}\right)}\leq R^2,$$
for 
\begin{equation}\label{ell.bound}
\ell=\ell_\varepsilon=\left\lfloor\frac{1}{2}\left(\frac{\al}{\phi^{-1}(\varepsilon/R)}\right)^{1/b}\right\rfloor,
\end{equation} 
where~$\lfloor x \rfloor$ is the greatest integer less than or equal
to~$x$. 

We choose~$\varepsilon_\circ$ such that~$\ell_{\varepsilon_\circ}>16$. Then from Proposition~6 \cite{Caponnetto}, for every positive~$\varepsilon<\varepsilon_\circ~(\ell_\varepsilon>\ell_{\varepsilon_\circ})$ there exists an integer~$N_\varepsilon \in \NN$ and~$\pi_1,\ldots,\pi_{N_\varepsilon} \in \{-1,+1\}^{\ell_\varepsilon}$ such that
for all~$1 \leq i,j \leq N_\varepsilon,~i \neq j$ it holds

\begin{equation}\label{sigma.i.j}
\sum_{n=1}^{\ell_\varepsilon}(\pi_i^n-\pi_j^n)^2 \geq \ell_{\varepsilon}
\end{equation}
and
\begin{equation}\label{N.epsilon.k}
\log(N_\varepsilon) \geq \ell_\varepsilon/24.
\end{equation}

Now we construct~$N_\varepsilon$-vectors satisfying the source condition (Assumption~\ref{source.cond}). For~$\varepsilon$ such that~$0<\varepsilon<\varepsilon_\circ$, we define 
\begin{equation}\label{gi}
g_i=\sum\limits_{n=\ell_\varepsilon+1}^{2\ell_\varepsilon}\frac{\varepsilon\pi_i^{n-\ell_\varepsilon}e_n}{\sqrt{\ell_\varepsilon}\phi(t_n)},
\end{equation}
where~$\pi_i=(\pi_i^1,\ldots,\pi_i^{\ell_\varepsilon})\in \{-1,+1\}^{\ell_\varepsilon}$ for~$i=1,\ldots,N_\varepsilon$. Hence from~\eqref{ell.bound}, we observe that~$\norm{g_i}_{\HH_1}\leq R$.

Suppose~$F(f)=\phi(T)g+\fbar$ for~$B=I_KA'(f)$,~$T=B^*B$ and some ~$g\in \HH_1$, then from Assumptions~\ref{assmpt1},~\ref{A.assumption1} (iv) for the Lipschitz continuous function~$\theta(t)=\phi(t^2)$ from Propositions~\ref{OP_Lip},~\ref{TB} under the Lipschitz continuity of the Fr{\'e}chet derivative of the operator~$A$ we obtain,
\begin{align*}
\norm{F(\tilde{f})-F(f)}_{\HH_1} &\leq  \norm{\{\phi(\widetilde{T})-\phi(T)\}g}_{\HH_1}\leq \norm{\{\theta(\widetilde{T}^{1/2})-\theta(T^{1/2})\}g}_{\HH_1} \\ \nonumber
&\leq \norm{g}_{\HH_1}
\norm{\theta(\widetilde{T}^{1/2})-\theta(T^{1/2})}_{\mathcal{L}(\HH_1)} \leq \norm{g}_{\HH_1}
\norm{\theta(\widetilde{T}^{1/2})-\theta(T^{1/2})}_{HS}  \\
&\leq L_\theta\norm{g}_{\HH_1} \norm{\widetilde{T}^{1/2}-T^{1/2}}_{HS} \leq\sqrt{2}L_\theta\norm{g}_{\HH_1}
\norm{\widetilde{B}-B}_{HS} \\
&\leq \sqrt{2}L_\theta\norm{g}_{\HH_1}
\norm{I_K\brac{A'(\tilde{f})-A'(f)}}_{HS} \leq \gamma\sqrt{2}L_\theta\norm{g}_{\HH_1}
\norm{\tilde{f}-f}_{\HH_1},
\end{align*}
where~$\widetilde{B}=I_K A'(\tilde{f})$ and~$\widetilde{T}=\widetilde{B}^*\widetilde{B}$.

If~$\gamma\sqrt{2}L_\theta\norm{g}_{\HH_1}<1$, then~$F$ is a contraction map. Hence, there exists a fixed point~$f_*\in\HH_1$ such that 
\begin{equation}\label{fix.point}
f_*=F(f_*)=\phi(T_*)g+\fbar,
\end{equation}
where~$T_*=(I_KA'(f_*))^*I_KA'(f_*)$. 
%

Hence for each~$g_i$ defined in~\eqref{gi} from~\eqref{fix.point} there exist~$f_i$~$(1 \leq i \leq N_\varepsilon)$ such that  

\begin{equation*}
f_i-\fbar=\phi(T_i)g_i, 
\end{equation*} 
where~$B_i=I_KA'(f_i)$ and~$T_i=B_i^*B_i$, i.e.,~$f_i\in\Omega(\rho_{f_i},\phi,R)$ provided that~$\gamma\sqrt{2}L_\theta\norm{g_i}_{\HH_1}<1$ for~$1\leq i\leq N_\varepsilon$ which can be satisfied by making the quantity~$\norm{g_i}_{\HH_1}$ arbitrarily small as~$\varepsilon\to 0$.

Under Assumption~\ref{A.assumption1} (iv) from eqn.~\eqref{gi}  for all~$1 \leq i,j \leq N_\varepsilon,$ we get,
\begin{equation*}
f_i-\fbar = \phi(T_i)g_i=R_{f_i}\phi(\overline{T})g_i 
= \left\lbrace I-(I-R_{f_i})\right\rbrace\sum\limits_{n=\ell_\varepsilon+1}^{2\ell_\varepsilon}\frac{\varepsilon \pi_i^{n-\ell_\varepsilon}e_n}{\sqrt{\ell_\varepsilon}}
\end{equation*}
and
\begin{equation}\label{fifj}
f_i-f_j = \sum\limits_{n=\ell_\varepsilon+1}^{2\ell_\varepsilon}\frac{\varepsilon (\pi_i^{n-\ell_\varepsilon}-\pi_j^{n-\ell_\varepsilon})e_n}{\sqrt{\ell_\varepsilon}}-(I-R_{f_i})\sum\limits_{n=\ell_\varepsilon+1}^{2\ell_\varepsilon}\frac{\varepsilon \pi_i^{n-\ell_\varepsilon}e_n}{\sqrt{\ell_\varepsilon}}+(I-R_{f_j})\sum\limits_{n=\ell_\varepsilon+1}^{2\ell_\varepsilon}\frac{\varepsilon \pi_j^{n-\ell_\varepsilon}e_n}{\sqrt{\ell_\varepsilon}}
\end{equation}
which implies from~(\ref{sigma.i.j}) that
\begin{equation}\label{fi_fs}
\|f_i-\fbar\|_{\HH_1}\leq \varepsilon(1+\|I-R_{f_i}\|_{\mathcal{L}(\HH_1)})
\end{equation}
and
\begin{equation}\label{f.i.j.bound.l}
\varepsilon\upsilon\leq\|f_i-f_j\|_{\HH_1}
\end{equation}
where~$\upsilon=1-\|I-R_{f_i}\|_{\mathcal{L}(\HH_1)}-\|I-R_{f_j}\|_{\mathcal{L}(\HH_1)}$.

Then under Assumption~\ref{A.assumption1} (iv) and~\eqref{fi_fs} we have,
$$\|I-R_f\|_{\mathcal{L}(\HH_1)}\leq \zeta\norm{f-\fbar}_{\HH_1}\leq \zeta\varepsilon(1+\|I-R_f\|_{\mathcal{L}(\HH_1)})$$
which implies that
$$\|I-R_f\|_{\mathcal{L}(\HH_1)}\leq\frac{\zeta\varepsilon}{1-\zeta\varepsilon}.$$

From Assumptions~\ref{assmpt1},~\ref{A.assumption1} (ii) and~\eqref{fifj} we get,
\begin{equation*}
\|\overline{B}(f_i-f_j)\|_{\mathscr{L}^2(X,\nu;Y)} \leq \|\overline{B}\phi(\overline{T})(g_i-g_j)\|_{\mathscr{L}^2(X,\nu;Y)}+\varepsilon\kappa L(\|I-R_{f_i}\|_{\mathcal{L}(\HH_1)}+\|I-R_{f_j}\|_{\mathcal{L}(\HH_1)}),
\end{equation*}
where~$\overline{B}=I_K\circ(A'(\fbar))$.

Now from Assumptions~\ref{A.assumption1} (iv), (v) and~\eqref{fi_fs} we get, 
\begin{align}\label{Bfij}
\|\overline{B}(f_i-f_j)\|_{\mathscr{L}^2(X,\nu;Y)}
&\leq \left(\sum_{n=\ell_\varepsilon+1}^{2\ell_\varepsilon} \frac{t_n\varepsilon^2\left(\pi_{i}^{n-\ell_\varepsilon}-\pi_{j}^{n-\ell_\varepsilon}\right)^2}{\ell_\varepsilon}\right)^{1/2}+\varepsilon \kappa L\zeta(\|f_i-\fbar\|_{\HH_1}+\|f_j-\fbar\|_{\HH_1})\\  \nonumber
&\leq \left(\sum_{n=\ell_\varepsilon+1}^{2\ell_\varepsilon} \frac{\beta\varepsilon^2\left(\pi_{i}^{n-\ell_\varepsilon}-\pi_{j}^{n-\ell_\varepsilon}\right)^2}{\ell_\varepsilon n^b}\right)^{1/2}+c\varepsilon^2\\ \nonumber
&\leq \left(\sum\limits_{n=\ell_\varepsilon+1}^{2\ell_\varepsilon}\frac{4\beta\varepsilon^2}{\ell_\varepsilon n^b}\right)^{1/2}+c\varepsilon^2
\leq \left(\frac{4\beta\varepsilon^2}{\ell_\varepsilon}\int_{\ell_\varepsilon}^{2\ell_\varepsilon}\frac{1}{x^b}dx\right)^{1/2}+c\varepsilon^2 \leq c'\frac{\varepsilon}{\ell_\varepsilon^{b/2}}+c\varepsilon^2,
\end{align}
where~$c=\kappa L\zeta(2+\|I-R_{f_i}\|_{\mathcal{L}(\HH_1)}+\|I-R_{f_j}\|_{\mathcal{L}(\HH_1)})$  and~$c'=\left(\frac{4\beta}{(b-1)}\left(1-\frac{1}{2^{b-1}}\right)\right)^{1/2}$.

Note that the Lipschitz continuity of the Fr{\'e}chet derivative of the operator~$A$ (Assumption~\ref{A.assumption1} (iii)) imply that
\begin{equation*}\label{Taylor_exp_Rl}
A(f_i)=A(\fbar)+A'(\fbar)(f_i-\fbar)+r(f_i)
\end{equation*}
holds with
\begin{equation*}\label{R_l}
\|I_Kr(f_i)\|_{\LL}\leq \frac{\gamma}{2}\|f_i-\fbar\|_{\HH_1}^2.
\end{equation*}

Hence, for~$1\leq i,j \leq N_\varepsilon$, from the inequality~\eqref{fi_fs},~\eqref{Bfij} we have,
\begin{eqnarray}\label{f.i.j.bound.u}
\|I_K\{A(f_i)-A(f_j)\}\|_{\mathscr{L}^2(X,\nu;Y)}^2 \hspace{-3mm}&=&\hspace{-3mm} \left\{\|\overline{B}(f_i-f_j)\|_{\mathscr{L}^2(X,\nu;Y)}+\|I_K \{r(f_i)-r(f_j)\}\|_{\mathscr{L}^2(X,\nu;Y)}\right\}^2 \\ \nonumber
\hspace{-3mm}&\leq&\hspace{-3mm} 2\|\overline{B}(f_i-f_j)\|_{\mathscr{L}^2(X,\nu;Y)}^2 + 2\|I_K\brac{r(f_i)-r(f_j)}\|_{\LL}^2 \\ \nonumber
\hspace{-3mm}&\leq&\hspace{-3mm} 2\|\overline{B}(f_i-f_j)\|_{\mathscr{L}^2(X,\nu;Y)}^2+ \gamma^2\|f_i-\fbar\|_{\HH_1}^4+\gamma^2\|f_j-\fbar\|_{\HH_1}^4   \\ \nonumber
&\leq& c''\left(\frac{\varepsilon^2}{\ell_\varepsilon^b}+\varepsilon^4\right),
\end{eqnarray}
where~$c''=4c^2+4c'^2+\gamma^2\{(1+\|I-R_{f_i}\|_{\mathcal{L}(\HH_1)})^4+(1+\|I-R_{f_j}\|_{\mathcal{L}(\HH_1)})^4\}$. 

Under Assumption~\ref{A.assumption1} (iv), if~$\norm{ f - \fbar}_{\HH_1} < 1/\zeta$, then from Neumann series we have that~$\norm{R_{f}^{-1}}_{\mathcal{L}(\HH_1)}<\infty$. Therefore, 
$$ \phi(T) = R_{f} \phi(T_{\ast}) \quad \text{and} \quad\phi(T_{\ast}) = R_{f}^{-1} \phi(T).~$$
Now using the relation for singular values~$s_{j}(A B) \leq \norm{A}{} s_{j}(B)$ for~$j\in\NN$ (see Chapter 11 \cite{Pietsch}) we obtain,
\begin{equation}\label{sn1}
\phi(s_{j}(T))=s_{j}(\phi(T)) \leq \norm{R_{f}}_{\mathcal{L}(\HH_1)}    s_{j}(\phi(T_{\ast}))=\norm{R_{f}}_{\mathcal{L}(\HH_1)}    \phi(s_{j}(T_{\ast}))
\end{equation}  
and
\begin{equation}\label{sn2}
\phi(s_{j}(T_{\ast}))=  s_{j}(\phi(T_{\ast})) \leq \norm{R_{f}^{-1}}_{\mathcal{L}(\HH_1)} s_{j}(\phi(T))= \norm{R_{f}^{-1}}_{\mathcal{L}(\HH_1)} s_{j}(\phi(T))
\end{equation}

Consequently, for small enough~$\norm{f_i - \fbar}_{\HH_1}$ corresponding to small~$\varepsilon$, the
eigenvalues of~$T_i$ and~$T_{\ast}$ decay in same order, hence in the polynomial order.

The inequality~\eqref{Kull_lieb} can be proved similar to Proposition~4 \cite{Caponnetto}. We obtain the desired results from the inequalities ~\eqref{N.epsilon.k},~\eqref{f.i.j.bound.l},~\eqref{f.i.j.bound.u},~\eqref{sn1},~\eqref{sn2} with~\eqref{Kull_lieb}.
\end{proof}


The following theorem is a restatement of Theorem~3.1 of \cite{DeVore} in non-linear statistical inverse problem setting.

\begin{proposition}\label{err.lower.bound.k}
For any learning algorithm ($\zz\to f_\zz\in\HH_1$) under the hypothesis~$dim(Y)=d<\infty$, Assumption~\ref{A.assumption1} and the condition~\eqref{condition}, there exists a probability measure~$\rho_*\in\mathcal{P}_{\phi,b}$ and~$f_{\rho_*}\in \HH_1$ such that for all~$0<\varepsilon<\varepsilon_\circ$,~$f_\zz$ can be approximated as
$$\mathbb{P}_{\zz\in Z^m}\left\{\|f_{\zz}-f_{\rho_*}\|_{\HH_1} >\varepsilon\upsilon/2\right\} \geq \min\left\{\frac{1}{1+e^{-\ell_\varepsilon/24}}, \vartheta e^{\left(\frac{\ell_\varepsilon}{48}-\frac{\widetilde{C}m\varepsilon^2}{\ell_\varepsilon^b}-\widetilde{C}m\varepsilon^4\right)}\right\}$$
where~$\vartheta=e^{-3/e}$ and~$\ell_\varepsilon=\left\lfloor\frac{1}{2}\left(\frac{\al}{\phi^{-1}(\varepsilon/R)}\right)^{1/b}\right\rfloor$.
\end{proposition}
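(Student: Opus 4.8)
The plan is to deduce this proposition from the abstract minimax lower bound of DeVore et al.\ (\cite[Theorem~3.1]{DeVore}, together with the reduction in their Lemma~3.3), applied to the finite family of probability measures $\{\rho_i=\rho_{f_i}\}_{i=1}^{N_\varepsilon}$ produced in Proposition~\ref{fi.fj.kull}. All of the geometric and information-theoretic content required as \emph{input} to that abstract result has already been assembled there, so the task here is to package the three relevant facts correctly and then invoke the cited bound.

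First I would record the ingredients supplied by Proposition~\ref{fi.fj.kull}. (a) The hypotheses are pairwise separated in $\HH_1$, namely $\|f_i-f_j\|_{\HH_1}\geq\varepsilon\upsilon$ for $i\neq j$, so the balls $\{g:\|g-f_i\|_{\HH_1}\leq\varepsilon\upsilon/2\}$ are pairwise disjoint. For the nearest-hypothesis decoder $\hat\imath(\zz):=\argmin_i\|f_\zz-f_i\|_{\HH_1}$ one then checks, via the triangle inequality $\|f_\zz-f_j\|_{\HH_1}\geq\|f_i-f_j\|_{\HH_1}-\|f_\zz-f_i\|_{\HH_1}\geq\varepsilon\upsilon/2$, that $\|f_\zz-f_i\|_{\HH_1}\leq\varepsilon\upsilon/2$ forces $\hat\imath(\zz)=i$; equivalently $\{\hat\imath(\zz)\neq i\}\subseteq\{\|f_\zz-f_i\|_{\HH_1}>\varepsilon\upsilon/2\}$, so an estimation error of size $\varepsilon\upsilon/2$ is at least as likely as a decoding error. (b) The Kullback--Leibler divergences are controlled by $\mathcal{K}(\rho_i,\rho_j)\leq\widetilde{C}(\varepsilon^2/\ell_\varepsilon^b+\varepsilon^4)$, whence by tensorization of the product measures on $Z^m$ one gets $\mathcal{K}(\rho_i^{\otimes m},\rho_j^{\otimes m})=m\,\mathcal{K}(\rho_i,\rho_j)\leq \widetilde{C}m(\varepsilon^2/\ell_\varepsilon^b+\varepsilon^4)$. (c) The cardinality obeys $N_\varepsilon\geq e^{\ell_\varepsilon/24}$, and by part~(iii) of Proposition~\ref{fi.fj.kull} together with the preceding proposition and condition~\eqref{condition}, every $\rho_{f_i}$ satisfies the noise assumption and the polynomial eigenvalue decay, so that indeed $\rho_{f_i}\in\mathcal{P}_{\phi,b}$.

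Next I would feed (a)--(c) into the abstract bound, which for separated hypotheses with small pairwise KL divergence yields, for any estimator, a worst-case lower bound on the decoding-error probability of the form
$$\max_{1\leq i\leq N_\varepsilon}\mathbb{P}_{\rho_i^{\otimes m}}\{\hat\imath(\zz)\neq i\}\ \geq\ \min\Bigl\{\tfrac{N_\varepsilon}{1+N_\varepsilon},\ \vartheta\,e^{\frac12\log N_\varepsilon-\max_{i,j}\mathcal{K}(\rho_i^{\otimes m},\rho_j^{\otimes m})}\Bigr\},$$
with $\vartheta=e^{-3/e}$. Substituting $N_\varepsilon\geq e^{\ell_\varepsilon/24}$ turns the first term into the lower bound $\tfrac{1}{1+e^{-\ell_\varepsilon/24}}$ and bounds $e^{\frac12\log N_\varepsilon}\geq e^{\ell_\varepsilon/48}$ (since $\log N_\varepsilon\geq\ell_\varepsilon/24$), while the KL estimate from (b) replaces the exponent by $-\widetilde{C}m\varepsilon^2/\ell_\varepsilon^b-\widetilde{C}m\varepsilon^4$. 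Taking $\rho_*$ (and $f_{\rho_*}=f_{i}$) to be the maximizing hypothesis, which by (c) lies in $\mathcal{P}_{\phi,b}$, and combining with the domination $\mathbb{P}_{\rho_i^{\otimes m}}\{\|f_\zz-f_i\|_{\HH_1}>\varepsilon\upsilon/2\}\geq\mathbb{P}_{\rho_i^{\otimes m}}\{\hat\imath(\zz)\neq i\}$ from (a), I obtain exactly the claimed inequality.

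The main obstacle lies not in the present proposition but in its prerequisite: the construction of the separated family, the KL bound, and the verification of membership in $\mathcal{P}_{\phi,b}$ (including the fixed-point argument that accommodates the nonlinearity in the source condition and the transfer of eigenvalue decay from $\overline{T}$ to $T_i$) are all carried out in Proposition~\ref{fi.fj.kull}. The care required here is essentially bookkeeping: matching the precise constants of \cite[Theorem~3.1]{DeVore} (the factor $\vartheta=e^{-3/e}$ and the exponent $\ell_\varepsilon/48$ arising from $\tfrac12\log N_\varepsilon$ and $\log N_\varepsilon\geq\ell_\varepsilon/24$), passing correctly from the single-sample divergences to the $m$-fold product divergences by tensorization, and confirming that the reduction of estimation to the nearest-hypothesis test is valid under the disjointness of the $\varepsilon\upsilon/2$-balls established in (a).
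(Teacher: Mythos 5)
Your proposal is correct and follows essentially the same route as the paper: both rest on the separated family, the KL tensorization $\mathcal{K}(\rho_i^{m},\rho_j^{m})=m\,\mathcal{K}(\rho_i,\rho_j)$, and the cardinality bound $N_\varepsilon\geq e^{\ell_\varepsilon/24}$ from Proposition~\ref{fi.fj.kull}, fed into \cite[Lemma~3.3]{DeVore} applied to the disjoint $\varepsilon\upsilon/2$-balls around the $f_i$. The only cosmetic difference is that the paper invokes the lemma directly on the events $A_i=\{\|f_\zz-f_i\|_{\HH_1}<\varepsilon\upsilon/2\}$ and then explicitly lower-bounds the function $\Psi_{N_\varepsilon}(p)$ (which is where the constant $\vartheta=e^{-3/e}$ and the exponent $\tfrac12\log N_\varepsilon$ come from), whereas you route through a nearest-hypothesis decoder and quote the bound with these constants already absorbed.
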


\begin{proof}
Let~$\varepsilon \leq \varepsilon_0$ and~$f_1,\ldots,f_{N_\varepsilon}$ be as in Proposition~\ref{fi.fj.kull}. Then we define the sets,
$$A_i=\left\{\zz\in Z^m:\|f_{\zz}-f_i\|_{\HH_1}<\frac{\varepsilon\upsilon}{2}\right\}, \text{ for } 1\leq i \leq N_\varepsilon.$$
It is clear from~(\ref{f.i.j.bound.l}) that~$A_i \cap A_j = \emptyset$ if~$i\neq j$. On applying Lemma~3.3 \cite{DeVore} with the probability measures~$\rho_{f_i}^m,~1 \leq i \leq N_\varepsilon$, we obtain that either
\begin{equation}\label{p.either.bound}
p:=\max\limits_{1\leq i \leq N_\varepsilon}\rho_{f_i}^m(A_i^c) \geq \frac{N_{\varepsilon}}{N_\varepsilon+1}
\end{equation}
or
\begin{equation}\label{leibler.l.bound}
\min\limits_{1 \leq j \leq N_\varepsilon} \frac{1}{N_\varepsilon}\sum\limits_{i=1, i \neq j}^{N_\varepsilon}\mathcal{K}(\rho_{f_i}^m,\rho_{f_j}^m) \geq \Psi_{N_\varepsilon}(p),
\end{equation}
where~$\Psi_{N_\varepsilon}(p)=\log(N_\varepsilon)+(1-p)\log\left(\frac{1-p}{p}\right) -p\log\left(\frac{N_\varepsilon-p}{p}\right)$.
Further,
\begin{equation}\label{psi.p.bound}
\Psi_{N_\varepsilon}(p)\geq(1-p)\log(N_\varepsilon)+(1-p)\log(1-p)-\log(p)+2p\log(p) \geq -\log(p)+\log(\sqrt{N_\varepsilon})-3/e.
\end{equation}
Since minimum value of~$x\log(x)$ is~$-1/e$ on~$[0,1]$.

For the joint probability measures~$\rho_{f_i}^m$,~$\rho_{f_j}^m$~$(\rho_{f_i},\rho_{f_j}\in\mathcal{P}_{\phi,b},~1\leq i,j\leq N_\varepsilon)$ from the inequality~(\ref{kullback}) we get,
\begin{equation}\label{leibler.u.bound}
\mathcal{K}(\rho_{f_i}^m,\rho_{f_j}^m)=m\mathcal{K}(\rho_{f_i},\rho_{f_j})\leq \widetilde{C}m\left(\frac{\varepsilon^2}{\ell_\varepsilon^b}+\varepsilon^4\right).
\end{equation}

Therefore the inequalities~(\ref{p.either.bound}),~(\ref{leibler.l.bound}), together with~(\ref{psi.p.bound}) and~(\ref{leibler.u.bound}) implies
$$p:=\max\limits_{1\leq i\leq N_\varepsilon}\left(\mathbb{P}\left\{\zz\in Z^m:\|f_\zz-f_i\|_{\HH_1}>\frac{\upsilon\varepsilon}{2}\right\}\right) \geq\min\left\{\frac{N_\varepsilon}{N_\varepsilon+1},\sqrt{N_\varepsilon}e^{-\frac{3}{e}-\widetilde{C}m\left(\frac{\varepsilon^2}{\ell_\varepsilon^b}+\varepsilon^4\right)}\right\}.$$

From the estimate~(\ref{N.epsilon.k}) for the probability measure~$\rho_*$ such that~$p=\rho_*^m(A_i^c)$ the desired result follows.
\end{proof}

\begin{proof}[Proof of Theorem \ref{err.lower.bound.k.para}]
From Proposition~\ref{err.lower.bound.k} for some probability measure~$\rho^*\in\mathcal{P}_{\phi,b}$ with~$0<\varepsilon<\varepsilon_0$ we get,

\begin{align*}
&\mathbb{P}_{\zz\in Z^m}\left\{\|f_\zz-f_{\rho_*}\|_{\HH_1}> \frac{\upsilon\varepsilon}{2}\right\}\\
\geq &\min\left\{\frac{1}{1+e^{-\ell_\varepsilon/24}},\vartheta e^{-\frac{1}{48}} e^{\left\{\frac{1}{96}\left(\frac{\al}{\phi^{-1}(\varepsilon/R)}\right)^{1/b}-\widetilde{C}m\varepsilon^2\left(\frac{2^{2b-1}\phi^{-1}(\varepsilon/R)}{\al-2^{2b-1}\phi^{-1}(\varepsilon/R)}\right)-\widetilde{C}m\varepsilon^4\right\}}\right\},
\end{align*}
where~$\vartheta=e^{-3/e}$ and~$\ell_\varepsilon=\left\lfloor\frac{1}{2}\left(\frac{\al}{\phi^{-1}(\varepsilon/R)}\right)^{1/b}\right\rfloor$.

Given~$\tau > 0$ for all~$m\in \NN$, we choose~$\varepsilon_m=\tau R\phi\left(\Psi^{-1}\paren{m^{-1/2}}\right)$. Since~$\varepsilon_m$ tends to~$0$ when~$m$
tends to~$+\infty$, therefore for~$m$ large enough~$\varepsilon_m \leq \varepsilon_0$. So Proposition~\ref{err.lower.bound.k} applies ensuring, 
$$\mathbb{P}_{\zz\in Z^m}\left\{\|f_\zz-f_{\rho_*}\|_{\HH_1}> \frac{\tau \upsilon R}{2}\phi\left(\Psi^{-1}\paren{m^{-1/2}}\right)\right\} \geq\min\left\{\frac{1}{1+e^{-\ell_\varepsilon/24}},\vartheta e^{-\frac{1}{48}}e^{c(m)}\right\},$$
where~$c(m)=\left(\Psi^{-1}\paren{m^{-1/2}}\right)^{-1/b}\left\lbrace\frac{\al^{1/b}}{96}-\frac{\widetilde{C}\tau^2R^22^{2b-1}}{\al-2^{2b-1}\Psi^{-1}\paren{m^{-1/2}}}-\widetilde{C}\tau^4R^4\left(\frac{\phi^2\left(\Psi^{-1}\paren{m^{-1/2}}\right)}{\Psi^{-1}\paren{m^{-1/2}}}\right)\right\rbrace$.


Now as~$m$ tends to~$\infty$,~$\varepsilon\to 0$ and~$\ell_\varepsilon\to\infty.$ Therefore, we conclude that
$$\lim\limits_{\tau\to 0}\liminf\limits_{m\to\infty}\inf\limits_{l\in\mathcal{A}}\sup\limits_{\rho\in\mathcal{P}_{\phi,b}} \mathbb{P}_{\zz\in Z^m}\left\{\|f_\zz^l-f_\rho\|_{\HH_1}> \frac{\tau \upsilon R}{2}\phi\left(\Psi^{-1}\paren{m^{-1/2}}\right)\right\}=1.$$
\end{proof}

\begin{proposition}\label{OP_Lip}
Let~$F,~\widetilde{F}:\HH\to\HH$ be the self-adjoint, Hilbert-Schmidt operators over the separable Hilbert space~$\HH$. If~$\theta(t)$ is Lipschitz continuous with Lipschitz constant~$L_\theta\geq 0$, then~$\theta$ is also operator Lipschitz in Hilbert-Schmidt norm:
$$\norm{\theta(F)-\theta(\widetilde{F})}_{HS}\leq L_\theta\norm{F-\widetilde{F}}_{HS}.$$
\end{proposition}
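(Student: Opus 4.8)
The plan is to diagonalize $F$ and $\widetilde{F}$ and reduce the Hilbert--Schmidt norm to a double sum over eigenvalue differences, to which the scalar Lipschitz bound applies term by term. First I would invoke the spectral theorem for compact self-adjoint operators: since $F$ and $\widetilde{F}$ are Hilbert--Schmidt (hence compact) and self-adjoint on the separable space $\HH$, there exist complete orthonormal eigenbases $(e_i)_{i\in\NN}$ and $(\varphi_j)_{j\in\NN}$ with $Fe_i=\lambda_i e_i$ and $\widetilde{F}\varphi_j=\mu_j\varphi_j$. The eigenvalues satisfy $\sum_i\lambda_i^2<\infty$ and $\sum_j\mu_j^2<\infty$; moreover, because $\theta(0)=0$ in our application we have $\abs{\theta(\lambda_i)}\leq L_\theta\abs{\lambda_i}$, so that $\theta(F)$ and $\theta(\widetilde{F})$ are themselves Hilbert--Schmidt and the functional calculus gives $\theta(F)=\sum_i\theta(\lambda_i)\inner{\cdot,e_i}e_i$ and likewise for $\widetilde{F}$.

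The key computation is to expand $\theta(F)-\theta(\widetilde{F})$ in the Hilbert--Schmidt inner product using the basis $(\varphi_j)$ on the domain side and $(e_i)$ on the range side. Since $\inner{\theta(F)\varphi_j,e_i}=\theta(\lambda_i)\inner{\varphi_j,e_i}$ and $\inner{\theta(\widetilde{F})\varphi_j,e_i}=\theta(\mu_j)\inner{\varphi_j,e_i}$, Parseval's identity yields
\begin{equation*}
\norm{\theta(F)-\theta(\widetilde{F})}_{HS}^2=\sum_{i,j}\abs{\theta(\lambda_i)-\theta(\mu_j)}^2\abs{\inner{\varphi_j,e_i}}^2.
\end{equation*}
Applying the scalar Lipschitz estimate $\abs{\theta(\lambda_i)-\theta(\mu_j)}\leq L_\theta\abs{\lambda_i-\mu_j}$ termwise, and then running the identical computation with $\theta$ replaced by the identity map, gives
\begin{equation*}
\norm{\theta(F)-\theta(\widetilde{F})}_{HS}^2\leq L_\theta^2\sum_{i,j}\abs{\lambda_i-\mu_j}^2\abs{\inner{\varphi_j,e_i}}^2=L_\theta^2\norm{F-\widetilde{F}}_{HS}^2,
\end{equation*}
from which the claim follows upon taking square roots.

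The main technical point to watch is the step producing the displayed double-sum identity: one must use that $(e_i)$ and $(\varphi_j)$ are \emph{complete} orthonormal systems (including the eigenvectors for eigenvalue $0$ that span the kernels), so that both Parseval expansions are legitimate, and one must justify interchanging the order of summation in the double sum. The interchange is immediate since every summand is nonnegative, so Tonelli's theorem applies and no genuine obstacle arises there. The only substantive requirement is that $\theta(F)$, $\theta(\widetilde{F})$ and their difference be Hilbert--Schmidt, which is secured by the hypothesis $\theta(0)=0$ together with the Lipschitz bound; this is exactly the setting in which the proposition is invoked (with $\theta(t)=\phi(t^2)$ and $\phi(0)=0$).
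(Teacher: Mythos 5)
Your proof is correct and follows essentially the same route as the paper's: both diagonalize $F$ and $\widetilde{F}$ in eigenbases, reduce $\|\theta(F)-\theta(\widetilde{F})\|_{HS}^2$ to the double sum $\sum_{i,j}\left(\theta(\lambda_i)-\theta(\mu_j)\right)^2\left|\langle e_i,\varphi_j\rangle\right|^2$ over the cross Gram coefficients, apply the scalar Lipschitz bound termwise, and re-identify the resulting sum as $L_\theta^2\|F-\widetilde{F}\|_{HS}^2$. The only cosmetic difference is that the paper organizes this computation vector-by-vector, bounding $\|(\theta(F)-\theta(\widetilde{F}))e_j\|^2$ for each eigenvector $e_j$ of $F$ and then summing over $j$, which is exactly the double sum you write out in one step.
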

\begin{proof}
Let~$(e_i,\mu_i)_{i\in\NN}$ and~$(\tilde{e}_i,\tilde{\mu}_i)_{i\in\NN}$ be the singular value decompositions of the operators~$F$ and~$\widetilde{F}$, i.e.,~$F=\sum\limits_{i=1}^\infty \mu_i\langle\cdot,e_i\rangle_{\HH}e_i$ and~$\widetilde{F}=\sum\limits_{i=1}^\infty \tilde{\mu}_i\langle\cdot,\tilde{e}_i\rangle_{\HH}\tilde{e}_i$. The values~$(\mu_i)_{i\in\NN}$ and~$(\tilde{\mu}_i)_{i\in\NN}$ are the singular values of the operators~$F$ and~$\widetilde{F}$, respectively. The vectors~$(e_i)_{i\in\NN}$ and~$(\tilde{e}_i)_{i\in\NN}$ are the orthonormal basis of the Hilbert space~$\HH$ and the eigenvectors of the operators~$F$ and~$\widetilde{F}$, respectively. 

We have,
\begin{eqnarray*}
\norm{\{\theta(F)-\theta(\widetilde{F})\}e_j}_{\HH}^2 &=&\norm{\theta(\mu_j)e_j-\sum\limits_{i=1}^\infty \theta(\tilde{\mu}_i)\langle e_j,\tilde{e}_i\rangle_{\HH}\tilde{e}_i}_{\HH}^2=\sum\limits_{i=1}^\infty\left(\theta(\mu_j)-\theta(\mu_i)\right)^2\langle e_j,\tilde{e}_i\rangle_{\HH}^2 \\ \nonumber 
&\leq& L_\theta^2\sum\limits_{i=1}^\infty\left(\mu_j-\tilde{\mu}_i\right)^2\langle e_j,\tilde{e}_i\rangle_{\HH}^2 =L_\theta^2\norm{\{F-\widetilde{F}\}e_j}_{\HH}^2
\end{eqnarray*}
which implies
$$\norm{\theta(F)-\theta(\widetilde{F})}_{HS}\leq L_\theta\norm{F-\widetilde{F}}_{HS}.$$
\end{proof}

\begin{proposition}\label{TB}
Let~$B,~\widetilde{B}:\HH_1\to\HH_2$ be the Hilbert-Schmidt operators over the arbitrary separable Hilbert spaces~$\HH_1$,~$\HH_2$ and~$T=B^*B$,~$\widetilde{T}=\widetilde{B}^*\widetilde{B}$. Then
$$\norm{T^{1/2}-\widetilde{T}^{1/2}}_{HS}\leq\sqrt{2}\norm{B-\widetilde{B}}_{HS}.$$
\end{proposition}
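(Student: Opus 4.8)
The plan is to reduce the claim about the nonlinear map $B\mapsto T^{1/2}=(B^*B)^{1/2}$ to the operator-Lipschitz estimate for self-adjoint operators already established in Proposition~\ref{OP_Lip}, by means of a standard $2\times 2$ dilation. First I would pass to the separable Hilbert space $\HH:=\HH_1\oplus\HH_2$ and introduce the operators
\[
\widehat{B}=\begin{pmatrix} 0 & B^* \\ B & 0\end{pmatrix},\qquad
\widehat{\widetilde{B}}=\begin{pmatrix} 0 & \widetilde{B}^* \\ \widetilde{B} & 0\end{pmatrix}
\]
on $\HH$. Each is self-adjoint, since the adjoint of an off-diagonal block operator interchanges the two blocks via transposition, and each is Hilbert--Schmidt because $B,\widetilde{B}$ (and hence $B^*,\widetilde{B}^*$) are. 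Thus Proposition~\ref{OP_Lip} will apply verbatim to $\widehat{B}$ and $\widehat{\widetilde{B}}$.

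The key observation is that squaring produces a block-diagonal operator, $\widehat{B}^2=\mathrm{diag}(B^*B,\,BB^*)=\mathrm{diag}(T,\,BB^*)$, so that the absolute value is again block diagonal, $|\widehat{B}|=(\widehat{B}^2)^{1/2}=\mathrm{diag}\bigl(T^{1/2},\,(BB^*)^{1/2}\bigr)$, and similarly for $\widehat{\widetilde{B}}$. Since the scalar function $\theta(t)=\abs{t}$ is Lipschitz continuous with constant $1$ and satisfies $\theta(\widehat{B})=|\widehat{B}|$ by the spectral calculus for the self-adjoint operator $\widehat{B}$, applying Proposition~\ref{OP_Lip} with this $\theta$ yields
\[
\norm{\,|\widehat{B}|-|\widehat{\widetilde{B}}|\,}_{HS}\leq \norm{\widehat{B}-\widehat{\widetilde{B}}}_{HS}.
\]

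It then remains to evaluate both Hilbert--Schmidt norms in block form, using that the square of the HS norm of a block operator is the sum of the HS norms of its four blocks. On the right-hand side, $\widehat{B}-\widehat{\widetilde{B}}$ has only the two off-diagonal blocks $B-\widetilde{B}$ and $B^*-\widetilde{B}^*$, whose HS norms coincide, so $\norm{\widehat{B}-\widehat{\widetilde{B}}}_{HS}^2=2\norm{B-\widetilde{B}}_{HS}^2$. On the left-hand side, $|\widehat{B}|-|\widehat{\widetilde{B}}|$ is block diagonal with entries $T^{1/2}-\widetilde{T}^{1/2}$ and $(BB^*)^{1/2}-(\widetilde{B}\widetilde{B}^*)^{1/2}$, so discarding the nonnegative contribution of the second block gives $\norm{T^{1/2}-\widetilde{T}^{1/2}}_{HS}^2\leq\norm{\,|\widehat{B}|-|\widehat{\widetilde{B}}|\,}_{HS}^2$. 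Chaining these with the inequality above produces $\norm{T^{1/2}-\widetilde{T}^{1/2}}_{HS}^2\leq 2\norm{B-\widetilde{B}}_{HS}^2$, and taking square roots delivers the stated bound with constant $\sqrt{2}$.

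There is no substantial analytic obstacle here: the dilation is precisely what linearizes the quadratic dependence $B\mapsto B^*B$, replacing it by the linear passage to the self-adjoint $\widehat{B}$, whose absolute value is governed by the scalar Lipschitz constant of $\abs{\,\cdot\,}$. The only point deserving care is the bookkeeping that verifies $\widehat{B}$ is genuinely self-adjoint and Hilbert--Schmidt so that Proposition~\ref{OP_Lip} is applicable, together with the identification $\theta(\widehat B)=|\widehat B|=(\widehat B^2)^{1/2}$; the emergence of the factor $\sqrt{2}$ is then exactly the price paid for symmetrizing $B$ into the off-diagonal blocks of $\widehat B$.
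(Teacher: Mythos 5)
Your proof is correct, and it reaches the stated bound by a genuinely different route than the paper. The paper works directly with the singular value decompositions of $B$ and $\widetilde{B}$: it expands $\norm{\{T^{1/2}-\widetilde{T}^{1/2}\}e_j}_{\HH_1}^2$, $\norm{\{B-\widetilde{B}\}e_j}_{\HH_2}^2$, $\norm{\{B^*-\widetilde{B}^*\}f_j}_{\HH_1}^2$ and $\norm{\{(BB^*)^{1/2}-(\widetilde{B}\widetilde{B}^*)^{1/2}\}f_j}_{\HH_1}^2$ in the two eigenbases, applies the elementary inequality $2\langle e_j,\tilde{e}_i\rangle_{\HH_1}\langle f_j,\tilde{f}_i\rangle_{\HH_2}\leq \langle e_j,\tilde{e}_i\rangle_{\HH_1}^2+\langle f_j,\tilde{f}_i\rangle_{\HH_2}^2$, and sums over $j$ to arrive at
\begin{equation*}
\norm{T^{1/2}-\widetilde{T}^{1/2}}_{HS}^2+\norm{(BB^*)^{1/2}-(\widetilde{B}\widetilde{B}^*)^{1/2}}_{HS}^2\leq \norm{B-\widetilde{B}}_{HS}^2+\norm{B^*-\widetilde{B}^*}_{HS}^2 ,
\end{equation*}
which is precisely the inequality your block bookkeeping yields; both arguments then discard the second diagonal term and use $\norm{B^*-\widetilde{B}^*}_{HS}=\norm{B-\widetilde{B}}_{HS}$. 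What your dilation buys is modularity and brevity: the quadratic dependence $B\mapsto B^*B$ is linearized by passing to $\widehat{B}$, and all the analytic work is delegated to Proposition~\ref{OP_Lip} applied to the $1$-Lipschitz function $\theta(t)=\abs{t}$, whereas the paper's computation is longer but keeps Proposition~\ref{TB} self-contained and independent of Proposition~\ref{OP_Lip}. One point you should make explicit: the spectrum of $\widehat{B}$ is $\{\pm s_i(B)\}\cup\{0\}$, so you are invoking Proposition~\ref{OP_Lip} for self-adjoint operators with genuinely negative eigenvalues and $\theta$ Lipschitz on all of $\RR$. This is exactly what that proposition asserts, but the paper only ever applies it to positive operators and phrases its proof in terms of ``singular values''; for your application one must read that decomposition as the spectral decomposition with real (possibly negative) eigenvalues, under which the proof of Proposition~\ref{OP_Lip} goes through verbatim. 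With that reading your argument is complete, and the constant $\sqrt{2}$ emerges, as you note, from $\norm{\widehat{B}-\widehat{\widetilde{B}}}_{HS}^2=2\norm{B-\widetilde{B}}_{HS}^2$.
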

\begin{proof}
Let~$(e_i,f_i,\mu_i)_{i\in\NN}$ and~$(\tilde{e}_i,\tilde{f}_i,\tilde{\mu}_i)_{i\in\NN}$ be the singular value decompositions of the operators~$B$ and~$\widetilde{B}$, i.e.,~$B=\sum\limits_{i=1}^\infty \mu_i\langle\cdot,e_i\rangle_{\HH_1}f_i$ and~$\widetilde{B}=\sum\limits_{i=1}^\infty \tilde{\mu}_i\langle\cdot,\tilde{e}_i\rangle_{\HH_1}\tilde{f}_i$. The values~$(\mu_i)_{i\in\NN}$ and~$(\tilde{\mu}_i)_{i\in\NN}$ are the singular values of the operators~$B$ and~$\widetilde{B}$, respectively. The vectors~$(e_i)_{i\in\NN}$ and~$(\tilde{e}_i)_{i\in\NN}$ are the orthonormal basis of the Hilbert space~$\HH_1$ and the eigenvectors of the operators~$T=B^*B$ and~$\widetilde{T}=\widetilde{B}^*\widetilde{B}$, respectively. The vectors~$(f_i)_{i\in\NN}$ and~$(\tilde{f}_i)_{i\in\NN}$ are the orthonormal basis of the Hilbert space~$\HH_2$ and the eigenvectors of the operators~$BB^*$ and~$\widetilde{B}\widetilde{B}^*$, respectively. 

We have 
\begin{eqnarray}\label{TB1}
\norm{\{T^{1/2}-\widetilde{T}^{1/2}\}e_j}_{\HH_1}^2&=&\langle e_j,Te_j\rangle_{\HH_1}+\langle e_j,\widetilde{T}e_j\rangle_{\HH_1}-2\langle T^{1/2}e_j,\widetilde{T}^{1/2}e_j\rangle_{\HH_1}\\ \nonumber
&=&\norm{Be_j}_{\HH_2}^2+\norm{\widetilde{B}e_j}_{\HH_2}^2-2\mu_j\sum\limits_{i=1}^\infty\tilde{\mu}_i\langle e_j,\tilde{e_i}\rangle_{\HH_1}^2
\end{eqnarray}

and
\begin{eqnarray}\label{TB2}
\norm{\{B-\widetilde{B}\}e_j}_{\HH_2}^2 &=& \norm{Be_j}_{\HH_2}^2+\norm{\widetilde{B}e_j}_{\HH_2}^2-2\langle Be_j,\widetilde{B}e_j\rangle_{\HH_2}\\ \nonumber
&=& \norm{Be_j}_{\HH_2}^2+\norm{\widetilde{B}e_j}_{\HH_2}^2-2\mu_j\sum\limits_{i=1}^\infty \tilde{\mu}_i\langle e_j,\tilde{e_i}\rangle_{\HH_1}\langle f_j,\tilde{f_i}\rangle_{\HH_2}\\ \nonumber
&\geq& \norm{Be_j}_{\HH_2}^2+\norm{\widetilde{B}e_j}_{\HH_2}^2-\mu_j\sum\limits_{i=1}^\infty \tilde{\mu}_i\left(\langle e_j,\tilde{e_i}\rangle_{\HH_1}^2+\langle f_j,\tilde{f_i}\rangle_{\HH_2}^2\right).
\end{eqnarray}

Similarly,

\begin{equation}\label{TB3}
\norm{\{B^*-\widetilde{B}^*\}f_j}_{\HH_1}^2 
\geq \norm{B^*f_j}_{\HH_1}^2+\norm{\widetilde{B}^*f_j}_{\HH_1}^2-\mu_j\sum\limits_{i=1}^\infty \tilde{\mu}_i\left(\langle e_j,\tilde{e_i}\rangle_{\HH_1}^2+\langle f_j,\tilde{f_i}\rangle_{\HH_2}^2\right)
\end{equation}

and
\begin{equation}\label{TB4}
\norm{\{(BB^*)^{1/2}-(\widetilde{B}\widetilde{B}^*)^{1/2}\}f_j}_{\HH_1}^2
=\norm{B^*f_j}_{\HH_1}^2+\norm{\widetilde{B}^*f_j}_{\HH_1}^2-2\mu_j\sum\limits_{i=1}^\infty\tilde{\mu}_i\langle f_j,\tilde{f_i}\rangle_{\HH_2}^2.
\end{equation}

From~\eqref{TB1},~\eqref{TB2},~\eqref{TB3},~\eqref{TB4} we obtain,
$$\norm{\{T^{1/2}-\widetilde{T}^{1/2}\}e_j}_{\HH_1}^2+\norm{\{(BB^*)^{1/2}-(\widetilde{B}\widetilde{B}^*)^{1/2}\}f_j}_{\HH_1}^2\leq \norm{\{B-\widetilde{B}\}e_j}_{\HH_2}^2+\norm{\{B^*-\widetilde{B}^*\}f_j}_{\HH_1}^2$$

which implies 

$$\norm{T^{1/2}-\widetilde{T}^{1/2}}_{HS}^2+\norm{(BB^*)^{1/2}-(\widetilde{B}\widetilde{B}^*)^{1/2}}_{HS}^2\leq \norm{B-\widetilde{B}}_{HS}^2+\norm{B^*-\widetilde{B}^*}_{HS}^2.$$

Hence,
$$\norm{T^{1/2}-\widetilde{T}^{1/2}}_{HS}^2\leq 2\norm{B-\widetilde{B}}_{HS}^2.$$

\end{proof}


\bibliographystyle{amsalpha}
\bibliography{library}

\end{document}